\documentclass[11pt]{article}

\topmargin -1.5cm \evensidemargin 0.5cm \oddsidemargin 0.5cm
\textwidth15.8cm \textheight23cm

\usepackage{amssymb,amsfonts,amsmath,amsthm}
\usepackage{graphicx}
\usepackage{color}
\usepackage{enumerate}
\usepackage{amssymb, amsfonts, amsthm,amsmath}
\usepackage{amsfonts,color,amssymb,mathrsfs,stmaryrd}

\usepackage{setspace}
\usepackage{graphicx}

\usepackage{tikz}

\tikzstyle{every node}=[color=black, circle, draw, fill=black, inner sep=0pt, minimum width=2pt]
\tikzstyle{left}=[color=green, circle, draw, fill=green, inner sep=0pt, minimum width=4pt]
\tikzstyle{right}=[color=red, circle, draw, fill=red, inner sep=0pt, minimum width=4pt]
\tikzstyle{s}=[color=black, circle, draw, fill=black, inner sep=0pt, minimum width=1.5pt]
\tikzstyle{l}=[color=black, circle, draw, fill=black, inner sep=0pt, minimum width=3pt]
\tikzstyle{plain}=[color=black, fill=none, draw=none, inner sep=0pt, minimum width=2pt]
\usetikzlibrary{calc}
\usetikzlibrary{decorations.pathreplacing}

\newcommand{\tod}{\stackrel{{\cal D}}{\longrightarrow}}
\newcommand{\toP}{\stackrel{\widehat{\mathbb{P}}_n}{\longrightarrow}}

\newcommand{\txi} {\tilde{\xi}}
\newcommand{\tS} {\tilde{S}}

\newcommand{\A}{\mathcal{A}}

\newcommand{\Dom}{W}
\newcommand{\eps}{\varepsilon}
\newcommand{\Pro}[1]{\mathbb{P} \left(\,#1\,\right)}
\newcommand{\Ex}[1]{\mathbb{E} \left[\, #1\,\right]}

\newcommand{\Var}{{\rm Var}}

\newcommand{\Ball}[2]{B^{\pm}_{#2} (#1)}

\newcommand{\HBallUp}[1]{ {\mathcal{B}^+ } (#1)}
\newcommand{\HBallLow}[1]{ {\mathcal{B}^- } (#1)}
\newcommand{\HBall}[1]{\mathcal{B}(#1)}
\newcommand{\HD}[1]{\mathcal{D} (#1)}
\newcommand{\BallDown} { B^-(p) }
\newcommand{\BallUp}{ B^+(p) }

\newcommand{\Y}{Y}

\newcommand{\Space}{\mathcal{\mathbf{S}}}
\newcommand{\RR}{\mathbb{R}}

\newcommand{\OneUp}{1_{+\varepsilon}}
\newcommand{\OneLow}{1_{-\varepsilon}}
\newcommand{\OnePM}{1_{\pm\varepsilon}}
\renewcommand{\P}{\mathbb{P} }

\newcommand{\N}{\mathbb{N}}

\newcommand{\R}[1] {D (#1)}

\newcommand{\Part}[1]{\mathrm{S}_{#1}}
\newcommand{\InterSec}[1]{\mathrm{S}^{\pm}_{#1}}

\newcommand{\tP} {\tilde{\cal P}_{\alpha, D}}
\newcommand{\tPH} {\tilde{\cal P}_{\alpha, D , H}}

\newcommand{\cP}{\mathcal{P}}
\newcommand{\Rcal}{D}

\newcommand{\D}{\mathcal{D}_R}
\newcommand{\Dis}[1]{\mathcal{D}_{#1}}

\newcommand{\Gnan} { \mathcal{G}_{\alpha,n} }

\def\be{\begin{equation}}
\def\ee{\end{equation}}
\def\bea{\begin{eqnarray}}
\def\eea{\end{eqnarray}}

\DeclareMathOperator{\Cov}{Cov}

\newtheorem{theorem}{Theorem}  

\newtheorem{lemma}[theorem]{Lemma}

\newtheorem{corollary}[theorem]{Corollary}

\numberwithin{theorem}{section}
\numberwithin{equation}{section}

\title{Limit theory for  isolated  and extreme points in hyperbolic random geometric graphs
\footnote{ \small 2010 \emph{Mathematics Subject Classification}: Primary: 05C80 Secondary: 05C12, 05C82.
\small \emph{Keywords}: random geometric graphs, hyperbolic plane, complex networks, central limit theorem.}
}

 \author{
Nikolaos Fountoulakis\footnote{School of Mathematics, University of Birmingham, United Kingdom  e-mail: \texttt{n.fountoulakis@bham.ac.uk}; Research partially supported by the Alan Turing Institute, grant EP/N510129/1}
\qquad
Joseph Yukich\footnote{Department of Mathematics, Lehigh University, USA e-mail: \texttt{jey0@lehigh.edu}; Research supported in part by Simons Collaboration grant 519427}
}

\date{November 2, 2020}

\begin{document}
\maketitle

\begin{abstract}   Given $\alpha \in (0, \infty)$ and $r \in (0, \infty)$, let ${\cal D}_{r, \alpha}$
 be the disc of radius $r$ in the hyperbolic plane having curvature $-
\alpha^2$.  Consider  the Poisson point process
having uniform intensity density on ${\cal D}_{R, \alpha}$, with $R = 2 \log(n/ \nu),$ $n \in \N$, and $\nu < n$ a fixed constant.
The points are projected onto ${\cal D}_{R, 1}$, preserving
polar coordinates, yielding a Poisson point process ${\cal P}_{\alpha, n}$ on  ${\cal D}_{R, 1}$.
The hyperbolic geometric graph ${\cal G}_{\alpha, n}$ on ${\cal P}_{\alpha, n}$  puts an edge
between pairs of points of ${\cal P}_{\alpha, n}$ which are  distant at most $R$.
This model has been used to express fundamental features of complex networks in terms of 
an underlying hyperbolic geometry.

For $\alpha \in (1/2, \infty)$ we establish expectation and variance asymptotics as well as asymptotic
 normality for the number of isolated and extreme points in ${\cal G}_{\alpha, n}$ as $n \to \infty.$  The
limit theory and renormalization for the number of isolated points are highly sensitive on the curvature parameter. In particular, for $\alpha \in (1/2, 1)$, the variance is super-linear, for $\alpha = 1$ the variance is linear with a logarithmic correction, whereas for $\alpha \in (1, \infty)$ the variance is linear.
The central limit theorem fails for  $\alpha \in (1/2, 1)$ but it holds for $\alpha \in (1, \infty)$. 
\end{abstract}

\section{Introduction and main results}

\subsection{Hyperbolic random geometric graphs}

We  study in this paper the random geometric graph on the hyperbolic plane $H^{2}_{-1}$, as introduced
by Krioukov et al. \cite{ar:Krioukov}.   The standard Poincar\'e disk representation of $H^{2}_{-1}$ is
the open unit disk ${\cal D} := \{(u,v) \in \mathbb{R}^2 \ : \ u^2+v^2 < 1 \}$ equipped with the hyperbolic (Riemannian) metric $d_H$ given by $ds^2 = {4}~{du^2 + dv^2\over (1-u^2-v^2)^2}$.
Recall that the arclength of the boundary of a disk ${\cal D}_r \subset \cal D$ of radius $r$ and centered at the origin is
$2\pi \sinh (r)$, whereas the area of  ${\cal D}_r$ is $2\pi (\cosh ( r) - 1)$.

Given $\nu  \in (0, \infty)$  a fixed constant and a natural number $n > \nu$, we let 
$$
R : = 2 \log (n/\nu),  
$$
 i.e.,  $ n = \nu \exp(R /2)$.
For every $\alpha \in (0, \infty)$, consider the probability density function
\begin{equation} \label{eq:pdf}
 \rho_{\alpha, n} (r): = \begin{cases}
\alpha {\sinh  (\alpha r) \over \cosh (\alpha R ) - 1} & 0\leq r \leq R \\
0 & \mbox{otherwise}
\end{cases}.
\end{equation}
Let $\theta$ be uniformly distributed on $(-\pi, \pi]$.
When $\alpha =1$ the distribution  of $(r, \theta)$ given by  ~\eqref{eq:pdf} is the uniform distribution on $\D$ under the  metric $d_H$.
For general $\alpha \in (0, \infty)$ Krioukov et al.~\cite{ar:Krioukov} call this the \emph{quasi-uniform} distribution on $\D$, since it arises as the projection of
the uniform distribution on a disc of hyperbolic radius $R$ in $H^2_{-\alpha^2}$, the hyperbolic plane having curvature $-\alpha^2$ and equipped with
the metric $\frac{4}{\alpha^2} ~{du^2 + dv^2\over (1-u^2-v^2)^2}$.


Denote by $\kappa_{\alpha, n}$ the Borel measure on $\D$ given by
\begin{equation} \label{eq:kappa-measure}
\kappa_{\alpha, n} (A) := \frac{1}{2\pi} \int_A \rho_{\alpha, n} (r) dr d \theta,
\end{equation}
where $A$ is a Borel subset of  $\D$.
We let ${\cal P}_{\alpha, n}$ denote the Poisson point process  on $\D$ with intensity measure $n \kappa_{\alpha, n}$.
Denote by $(\Omega_n,\mathcal{F}_n, \mathbb{P}_n)$ the probability space on which the point process ${\cal P}_{\alpha, n}$ is realised.
Let $\mathbb{E}:= \mathbb{E}_n$ denote expectation with respect to $\mathbb{P}:= \mathbb{P}_n$.

We join two points in ${\cal P}_{\alpha, n}$  with an edge if and only if they are within hyperbolic distance $R$ of each other.
The resulting {\em hyperbolic random geometric graph on $\D$}  is denoted by $\Gnan:= \mathcal{G}_{\alpha, n, \nu}$.
Figure~\ref{fig:tube} illustrates the disc ${\cal B}_R(v) $ of radius $R$ centered at  $v \in \D$.
\begin{figure}
\begin{center}
\includegraphics[scale=0.45]{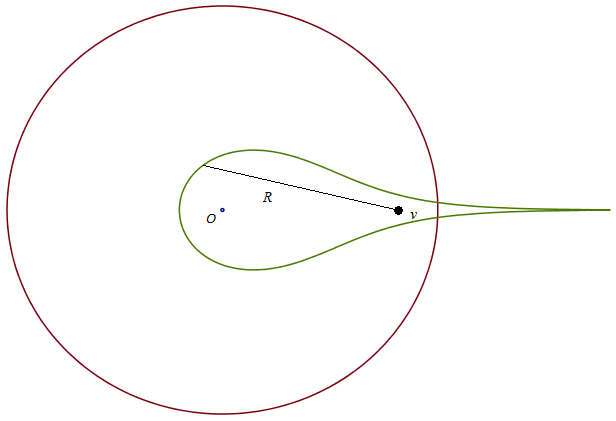}
\includegraphics[scale=0.45]{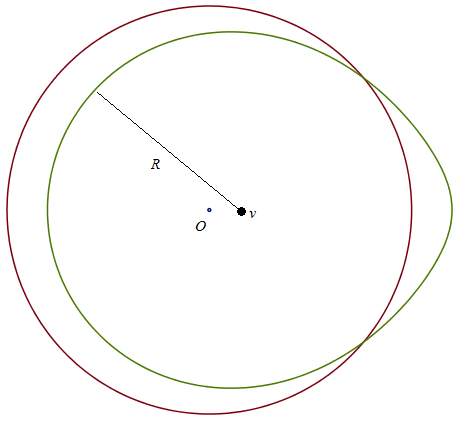}
\caption{The disc ${\cal B}_R(v) $ around the point $v \in \D$.}
\label{fig:tube}
\end{center}
\end{figure}
An equivalent construction of   $\Gnan$ goes as follows.
Given $\alpha \in (0, \infty)$ and $r \in (0, \infty)$, let ${\cal D}_{r, \alpha}$
 be a disc of radius $r$ in $H^2_{-\alpha^2}$. Consider  the Poisson point process
having uniform intensity density on ${\cal D}_{R, \alpha}$.
The points are projected onto ${\cal D}_{R, 1}$, preserving
polar coordinates, and the hyperbolic geometric graph on ${\cal D}_{R, \alpha}$ is created by putting an edge
between the points of the Poisson point process
whose projections are distant at most $R$.  The projection of this graph onto ${\cal D}_{R, 1}$ is $\mathcal{G}_{\alpha, n, \nu}$.


When  ${\cal P}_{\alpha, n}$ is replaced by $n$ i.i.d. random variables having density  $\rho_{\alpha, n}(r)/2 \pi $, we obtain the model of  Krioukov et al. ~\cite{ar:Krioukov}.
The underlying hyperbolic geometry gives rise to a power-law degree distribution tuned by the parameter $\alpha$,
whereas the parameter $\nu$ determines the average degree of $\Gnan$ \cite{ar:Krioukov}.  The model realises the assumption that there are intrinsic hierarchies in a complex network that induce a tree-like structure. This set-up provides a geometric framework describing  the inherent inhomogeneity of  complex networks and suggests that the geometry of complex networks is hyperbolic.

The graph $\Gnan$ also arises as a cosmological model.  As noted in~\cite{ar:Krioukov2},  the higher-dimensional analogue of  $\Gnan$ asymptotically coincides as $n \to \infty$ with the graph encoding the large scale causal structure of the de Sitter spacetime representation
of the universe.  Roughly speaking, the latter graph is obtained by
sprinkling Poisson points in de Sitter spacetime (the hyperboloid) and then joining two
points if they lie within each other's light cones. The light cones are then mapped to the hyperbolic plane, where they are approximated (for large times) by the hyperbolic balls of a certain radius (see Fig. 2 in~\cite{ar:Krioukov2}). Graph properties of $\Gnan$ thus yield information about the  causal structure of de Sitter spacetime.

\subsection{Main results}

For any $p \in \D$ we let $r(p)$ denote its radius (hyperbolic distance to the origin) and  
$$
y(p):= R - r(p)
$$
its defect radius. Given a  point process $\mathcal{P}$ on $\D$ and $p \in \mathcal{P} \cap \D$, we say that $p$ is {\em isolated}  with respect to $\mathcal{P}$  if and only if there is
no $p' \in \mathcal{P}$, $p'\not = p$, such that $d_H(p, p') \leq R$.  We say that $p$ is {\em extreme}  with respect to $\mathcal{P}$ if and only if there is no
 $p' \in \mathcal{P}$, $p'\not = p$,  such that $d_H(p, p') \leq R$ and $y(p') \in [0, y(p)).$

Given $p \in \mathcal{P}$, define the score $\xi^{iso}(p, \mathcal{P})$  to be $1$ if $p$ is isolated  with respect to $\mathcal{P}$ and zero otherwise.  Likewise, define $\xi^{ext}(p, \mathcal{P})$ to be $1$ if $p$ is extreme with respect to $\mathcal{P}$ and zero otherwise.
Our  main goal is to establish the limit theory for the number of isolated and extreme points in $\Gnan$, given respectively by
$$
S^{iso}({\cal P}_{\alpha, n}):= \sum_{p \in {\cal P}_{\alpha, n}} \xi^{iso}(p, {\cal P}_{\alpha, n})
$$
and
$$
S^{ext} ({\cal P}_{\alpha, n}):= \sum_{p \in {\cal P}_{\alpha, n}} \xi^{ext}(p, {\cal P}_{\alpha, n}).
$$
Isolated vertices are well-studied in the setting of Euclidean graphs, where they feature  in the connectivity
properties of certain random graph models. The paper \cite{Pen2016} elaborates on this when the graph in question is either the geometric graph on i.i.d. points in $[0,1]^d$ or even a soft version of this graph. In the cosmological set-up~\cite{ar:Krioukov2}, in the large time limit, isolated points are  precisely 
those whose past and future light cones are empty, i.e., the set of points neither accessible by the past nor having access to the future. Extreme points are those whose future light cones are empty, i.e.,  points which do not causally influence other points.

Extreme points are the analog of maximal points of a sample, of broad interest in computational geometry, networks, and analysis of linear programming. Recall that if $K \subset \mathbb{R}^d$ is a cone with apex at the origin of $\mathbb{R}^d$, then given ${\cal X} \subset \mathbb{R}^d$ locally finite, $x \in {\cal X}$ is $K$-maximal if $(K \oplus x) \cap {\cal X} = x$.  If ${\cal X}$ is an i.i.d. sample uniformly distributed on a smooth convex body $B$ in $\mathbb{R}^d$ of volume $1$, $n := \text{card}({\cal X})$,  then both the expectation and variance of the  number of maximal points in ${\cal X}$ are asymptotically $\Theta(n^{(d-1)/d})$, the order of the expected number of points close to the boundary of $B$~\cite{Yukich}.
In the present paper, the expectation and variance of the number of extreme  points are shown to grow linearly with $n := \text{card}({\cal X})$, which likewise is of the order of the expected number of points close to the boundary of $\D$.

The second order limit theory for the number of isolated points is altogether different. 
Our first main result  shows that the growth rates of the variance of  $S^{iso}({\cal P}_{\alpha, n})$ decrease with increasing $\alpha \in (1/2, \infty)$ and undergo a double jump when $\alpha$ crosses $1$.
Moreover, there is a logarithmic correction at $\alpha = 1$. 
 The variance grows faster than the expectation for $\alpha \in (1/2, 1]$, but it is always sub-quadratic with respect to input size. The asymptotics for the range $\alpha \in (1/2, 1]$  contrast markedly  with the  second order limit behavior of isolated points in the random geometric graph in the Euclidean plane ~\cite{bk:Penrose}, where asymptotics grow linearly with input.
This phenomenon, which gives rise to non-standard renormalization growth rates, appears to be linked to the high connectivity properties of $\Gnan$ for small $\alpha$, as described in Section \ref{properties}.

The limit constants appearing in our first and second order results \eqref{expected-iso}, \eqref{muformula}, and \eqref{Varext} are given in terms of expectations and covariances of scores involving isolated and extreme points of a Poisson point process on the upper half-plane, which appears to be a natural setting for studying such problems. Put $\gamma:= 8 \nu \alpha/ \pi(2 \alpha -1)$.

\begin{theorem} \label{prop:var_Shat}
We have for all $\alpha \in (1/2, \infty)$
\be \label{expected-iso}
\lim_{n \to \infty} \frac{  \mathbb{E} [ S^{iso}({\cal P}_{\alpha, n})] } { n} = 2 \alpha \int_0^{\infty}  \exp
\left(- \gamma e^{y/2}  \right) \exp ( - \alpha y)dy,
\ee
and
\be \label{Variso}
 \Var [ S^{iso}({\cal P}_{\alpha, n}) ] = \begin{cases}
\Theta(n^{3-2\alpha}) & \alpha \in ( \frac{1} {2}, 1) \\
\Theta(nR)  = \Theta(n \log n)  & \alpha =1 \\
\Theta(n) & \alpha \in (1, \infty)
\end{cases}.
\ee
\end{theorem}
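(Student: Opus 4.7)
The plan is to reduce both $\mathbb{E}[S^{iso}]$ and $\mathrm{Var}(S^{iso})$ to deterministic integrals over $\mathcal{D}_{R,1}$ using the single- and two-point Mecke equations for $\mathcal{P}_{\alpha,n}$, and then to extract the asymptotics after the change of variable $y = R - r$. The exponent $2\alpha - 1$ will appear throughout as the key integrability threshold.

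For the expectation, Mecke's formula combined with rotational symmetry gives
\[
\mathbb{E}[S^{iso}] \;=\; n\int_0^R \rho_{\alpha,n}(r)\,\exp\bigl(-n\kappa_{\alpha,n}(\mathcal{B}_R(p_r))\bigr)\,dr,
\]
where $p_r$ is any point at radius $r$ and the isolation probability is just the void probability of $\mathcal{P}_{\alpha,n}$ on $\mathcal{B}_R(p_r)$. Substituting $y = R - r$, standard hyperbolic expansions give $\rho_{\alpha,n}(R-y) \to \alpha e^{-\alpha y}$, while the hyperbolic law of cosines yields that $\mathcal{B}_R(p_{R-y})$ has, at partner defect $y_2$, angular width $\approx 4 e^{(y + y_2 - R)/2}$; integrating this against $\rho_{\alpha,n}$ produces $n\kappa_{\alpha,n}(\mathcal{B}_R(p_{R-y})) \to c(\alpha,\nu)\,e^{y/2}$, where the relevant tail integral in $y_2$ converges precisely because $\alpha > 1/2$. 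Dominated convergence against the majorant $\alpha e^{-\alpha y}$ then yields \eqref{expected-iso}.

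For the variance, the two-point Mecke formula applied to $\mathbb{E}[(S^{iso})^2]$, together with $(\xi^{iso})^2 = \xi^{iso}$ and the fact that two adjacent points cannot both be isolated, yields
\[
\mathrm{Var}(S^{iso}) \;=\; \mathbb{E}[S^{iso}] \,+\, I_{\mathrm{far}} \,-\, I_{\mathrm{close}},
\]
where, writing $u(p) := n\kappa_{\alpha,n}(\mathcal{B}_R(p))$,
\begin{align*}
I_{\mathrm{close}} &= n^2 \iint_{d_H(p,q)\leq R} e^{-u(p)-u(q)}\,d\kappa_{\alpha,n}(p)\,d\kappa_{\alpha,n}(q), \\
I_{\mathrm{far}} &= n^2 \iint_{d_H(p,q)>R} e^{-u(p)-u(q)}\bigl(e^{\,n\kappa_{\alpha,n}(\mathcal{B}_R(p)\cap\mathcal{B}_R(q))}-1\bigr)\,d\kappa_{\alpha,n}(p)\,d\kappa_{\alpha,n}(q),
\end{align*}
both non-negative. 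Parametrizing by defects $(y_1,y_2)$ and angular separation $\Delta = \theta_2-\theta_1$ (using rotational invariance to fix $\theta_1 = 0$), the condition $d_H(p,q)\le R$ becomes $|\Delta| \lesssim 2 e^{(y_1+y_2-R)/2}$ by the law of cosines, while a further explicit hyperbolic computation identifies $n\kappa_{\alpha,n}(\mathcal{B}_R(p)\cap\mathcal{B}_R(q))$ as an explicit function of $(y_1, y_2, \Delta)$ (a "lune" integral near the boundary plus a central-disc contribution that is negligible when $\alpha > 1/2$).

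Inserting these asymptotics, the three regimes in \eqref{Variso} emerge from a single integrability dichotomy in the $(y_1, y_2)$-integration once $\Delta$ has been integrated out against the factorized density $\alpha^2 e^{-\alpha(y_1+y_2)}/(2\pi)^2$. For $\alpha > 1$ the $(y_1,y_2)$-integrand is absolutely integrable on $[0,\infty)^2$, so $I_{\mathrm{far}}, I_{\mathrm{close}} = O(n)$ and the variance is of the same order as the diagonal term $\mathbb{E}[S^{iso}]$. For $\alpha = 1$ the $(y_1, y_2)$-integral becomes marginally divergent and its truncation at $y_i \leq R$ produces an extra factor $R = 2\log(n/\nu)$. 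For $\alpha \in (1/2, 1)$ the same truncation produces a factor $e^{(1-\alpha)R} \asymp n^{2(1-\alpha)}$, so that the $n^2$ in the two-point intensity is boosted to $n^{3-2\alpha}$ and $I_{\mathrm{far}}$ dominates. I expect the main technical obstacle to be the super-linear regime $\alpha \in (1/2, 1)$: for the upper bound one must control the exceptional pairs where $n\kappa_{\alpha,n}(\mathcal{B}_R(p) \cap \mathcal{B}_R(q)) = \Omega(1)$ (on which the linearization $e^x - 1 \approx x$ is unavailable and must be replaced by cruder absolute bounds), and for the lower bound one must exhibit an explicit family of pair configurations whose positive covariance survives the subtraction of $I_{\mathrm{close}}$ and matches the claimed $n^{3-2\alpha}$ rate.
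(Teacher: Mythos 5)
Your skeleton --- expectation via Mecke's formula and the void probability, variance via a two-point Mecke split $\Var S^{iso}=\mathbb{E}[S^{iso}]+I_{\mathrm{far}}-I_{\mathrm{close}}$ --- is the same one the paper uses (modulo its cosmetic change of coordinates to a half-plane rectangle), and the expectation part is essentially fine. But your account of where the three variance growth rates come from is wrong, and this is a genuine gap rather than a cosmetic slip. You attribute the factors $n^{2(1-\alpha)}$, $\log n$, $O(1)$ to the $(y_1,y_2)$-integration ``truncated at $y_i\le R$.'' This cannot be right for the covariance integral you set up: for $c^\xi(p_1,p_2)$ to be non-negligible both points must have a realistic chance of being isolated, which forces $y_i\le H:=4\log R=O(\log\log n)$ a.a.s.; and the factor $e^{-u(p_1)-u(p_2)}\asymp\exp\!\bigl(-\gamma(e^{y_1/2}+e^{y_2/2})\bigr)$ decays doubly exponentially, so the $(y_1,y_2)$-integral converges to an $n$-independent constant for every $\alpha>1/2$. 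It supplies no $e^{(1-\alpha)R}$ or $R$ factor.

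The $n$-dependence is produced entirely by the angular integration. For non-adjacent pairs, the paper shows that the lens mass satisfies $n\kappa_{\alpha,n}(\mathcal{B}_R(p_1)\cap\mathcal{B}_R(p_2))\asymp \kappa(Y_1,Y_2)\,t^{1-2\alpha}$, where $Y_i=e^{y_i/2}$ and $t$ is the rescaled angular separation, over a range whose upper endpoint is $\Theta\bigl(e^{R/2}(Y_1-Y_2)\bigr)=\Theta(n)$. It is $\int t^{1-2\alpha}\,dt$ over this $n$-wide interval that yields $O(1)$ for $\alpha>1$, $\Theta(\log n)$ for $\alpha=1$, and $\Theta(n^{2(1-\alpha)})$ for $\alpha\in(\tfrac12,1)$; multiplied by the $\Theta(n)$ from the free rotation of $p_1$, this is what produces the stated rates. (Your intuition about an $(1-\alpha)y$-integral truncated at $R$ is in fact the mechanism behind the paper's \emph{Poincar\'e} upper bound, $\Var F\le\int\mathbb{E}[(\nabla_p F)^2]\,d\mu_\alpha=O(n)\int_0^R e^{(1-\alpha)y}\,dy$, where the inserted point $p$ is genuinely allowed to range up to height $R$ --- but that is a different argument from the covariance integral.) This points to the economical split you should adopt: use the Poincar\'e inequality for the upper bound, which needs only $\mu_\alpha(B(p))\asymp e^{y(p)/2}$ and dodges the touching-configuration problem you flagged; and use the covariance integral, with the $t$-integration dichotomy and a suitable restriction of the $(Y_1,Y_2)$-domain to keep $\kappa t^{1-2\alpha}$ dominant over the defect term, only for the matching lower bound.
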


On the other hand, for all $\alpha \in (1/2, \infty)$, the expectation and variance asymptotics for the number of extreme points exhibit linear scaling in $n$, that is to say the renormalization is the standard one in stochastic geometric models.

\begin{theorem} \label{extVar}
We have for all $\alpha \in (1/2, \infty)$
\begin{equation} \label{muformula}
\lim_{n \to \infty} \frac{ \mathbb{E} [S^{ext}({\cal P}_{\alpha, n}) ]  }{n} = \mu,
\end{equation}
and
\be \label{Varext}
\lim_{n \to \infty} \frac{ \Var [S^{ext}({\cal P}_{\alpha, n})] } {n} = \sigma^2,
\ee
where $\mu, \sigma^2 \in (0, \infty)$ are given by ~\eqref{eq:extreme_mu} and \eqref{sigmaformula}, respectively,
 below.
\end{theorem}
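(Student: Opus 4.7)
\noindent\emph{Proof Plan.} The plan is to apply the Slivnyak--Mecke formula to $\mathcal{P}_{\alpha,n}$, exploit rotational invariance, and use a boundary rescaling that identifies the local behaviour of $\mathcal{P}_{\alpha,n}$ near $\partial \mathcal{D}_{R,1}$ with a Poisson process on the upper half-plane. Both $\mu$ and $\sigma^2$ will emerge as explicit integrals involving this limit process.

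For the expectation, Mecke's formula yields
\[
\mathbb{E}\bigl[S^{ext}(\mathcal{P}_{\alpha,n})\bigr] = n \int_{\mathcal{D}_{R,1}} q_n(p)\, \kappa_{\alpha,n}(dp),
\]
where $q_n(p) := \mathbb{P}\bigl[\xi^{ext}(p, \mathcal{P}_{\alpha,n} \cup \{p\}) = 1\bigr]$. Rotational invariance reduces $q_n(p)$ to a function $q_n(y)$ of $y = y(p)$ alone, giving
\[
\mathbb{E}\bigl[S^{ext}(\mathcal{P}_{\alpha,n})\bigr] = n \int_0^R q_n(y)\, \rho_{\alpha,n}(R-y)\, dy.
\]
A Taylor expansion of the hyperbolic law of cosines near $\partial\mathcal{D}_{R,1}$ shows that two points at defect radii $y_1, y_2$ are within hyperbolic distance $R$ iff $|\Delta\theta| \leq 2(\nu/n) e^{(y_1+y_2)/2}(1+o(1))$. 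Under the boundary rescaling $u := n\theta/(2\pi)$ the process $\mathcal{P}_{\alpha,n}$ converges locally to a Poisson process $\mathcal{P}_\infty$ on $[0,\infty) \times \mathbb{R}$ of intensity $\alpha e^{-\alpha y}\, dy\, du$, and the edge condition becomes $|\Delta u| \leq \tau(y_1, y_2) := (\nu/\pi)\, e^{(y_1+y_2)/2}$. Hence $q_n(y) \to q(y)$, the explicit void probability of $\mathcal{P}_\infty$ in $\{(y', u') : y' < y,\, |u'| \leq \tau(y, y')\}$. Dominated convergence, using the uniform bound $\rho_{\alpha,n}(R-y) \leq C e^{-\alpha y}$, then yields
\[
\mu = \int_0^\infty \alpha e^{-\alpha y}\, q(y)\, dy \in (0, \infty),
\]
which is \eqref{eq:extreme_mu}.

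For the variance, the two-point Mecke formula expresses $\Var[S^{ext}(\mathcal{P}_{\alpha,n})]$ as $\mathbb{E}[S^{ext}(\mathcal{P}_{\alpha,n})]$ plus the double integral $n^2 \iint c_n(p_1, p_2)\, \kappa_{\alpha,n}(dp_1)\,\kappa_{\alpha,n}(dp_2)$, with $c_n$ the centered pair covariance of the Palm extremeness scores. The decisive structural fact is that $c_n(p_1, p_2) = 0$ whenever $d_H(p_1, p_2) > 2R$: the hyperbolic balls of radius $R$ around $p_1$ and $p_2$ are then disjoint, so the two extremeness scores depend on independent restrictions of $\mathcal{P}_{\alpha,n}$. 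Restricted to the interaction tube $\{d_H(p_1, p_2) \leq 2R\}$, the same boundary rescaling sends $c_n$ to an explicit limit $c(y_1, y_2, \Delta u)$ supported on $|\Delta u| \lesssim e^{(y_1+y_2)/2}$, which is integrable against $\alpha^2 e^{-\alpha(y_1+y_2)}\, dy_1\, dy_2\, d(\Delta u)$ exactly when $\alpha > 1/2$. After rescaling, the double integral becomes $n$ times a finite integral over $(y_1, y_2, \Delta u)$ (one of the $u$-integrations being free of range $n$, the other absorbed into $\Delta u$), yielding
\[
\Var\bigl[S^{ext}(\mathcal{P}_{\alpha,n})\bigr] = n \sigma^2 + o(n),
\]
with $\sigma^2$ as in \eqref{sigmaformula}. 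Strict positivity $\sigma^2 > 0$ follows by partitioning $\mathcal{D}_{R,1}$ into $\Theta(n)$ disjoint angular sectors whose contributions to $S^{ext}$ are asymptotically independent with a uniformly non-degenerate common variance.

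The principal obstacle is producing an $n$-independent, integrable dominating function for $c_n$ on the entire interaction tube, uniform in defect radii $y_i \in [0, R]$; in the deep regime $y_i = \Omega(R)$ the boundary rescaling no longer applies cleanly, but there the weights $e^{-\alpha y_i}$ coming from $\rho_{\alpha,n}$ decay fast enough (using $\alpha > 1/2$) to render the contribution negligible after a routine truncation. Once this uniform control is in place, dominated convergence concludes the identification of both $\mu$ and $\sigma^2$ and the theorem follows.
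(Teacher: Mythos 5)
Your expectation argument is essentially the paper's: the Campbell--Mecke (Slivnyak--Mecke) formula, reduction by rotational invariance, the boundary rescaling of the angular coordinate by a factor of order $n$, and identification of the local limit with a Poisson process of intensity proportional to $e^{-\alpha y}\,dy\,du$ on the half-plane, followed by a void-probability computation and dominated convergence with the tail controlled by the smallness of the extremeness probability at large defect radius. Your rescaling $u=n\theta/(2\pi)$ differs from the paper's $x=\tfrac12\theta e^{R/2}$ only by a constant factor, so this part is in essence the same route. (One caveat: you claim your resulting integral ``is'' the paper's \eqref{eq:extreme_mu}; you should actually reconcile the constants, because under the paper's stated $\gamma=8\nu\alpha/\pi(2\alpha-1)$ and the prefactor $2\alpha$ the two expressions do not visibly agree, and the average-degree sanity check $\frac{8\alpha^2\nu}{\pi(2\alpha-1)^2}$ in fact supports your normalization.)

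For the variance, however, you take a genuinely different route from the paper, and the step you identify as ``the decisive structural fact'' is wrong as stated. The paper's proof does not argue via disjointness of the full balls $B_R(p_i)$; it defines a radius of stabilization $R^{\xi}$ for the extremeness score, proves a stretched-exponential tail bound $\mathbb{P}(R^{\xi}\ge t)\le c\,e^{\alpha y_0/2}e^{-\phi(t)}$, deduces a decay bound on $c^{\xi}((x_1,y_1),(x_2,y_2))$, and then passes to the infinite half-plane process by dominated convergence. Your proposal instead asserts that $c_n(p_1,p_2)=0$ whenever $d_H(p_1,p_2)>2R$. But $\mathcal{P}_{\alpha,n}\subset\mathcal{D}_R$, so \emph{every} pair of points satisfies $d_H(p_1,p_2)\le 2R$ by the triangle inequality; the claimed restriction is vacuous and gives no control on the support of the covariance. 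The subsequent claim that, after rescaling, $c$ is supported on $|\Delta u|\lesssim e^{(y_1+y_2)/2}$ is in fact correct, but it does not follow from the $2R$-disjointness of the full balls. What actually makes it true is that the extremeness indicator of $p$ depends only on the \emph{truncated} ball $\mathcal{D}(p)=B_R(p)\cap\{y'\le y(p)\}$ (see the paper's \eqref{trunc}--\eqref{eq:HD_def}), whose angular half-width at height $y'\le y(p)$ is of order $e^{(y(p)+y')/2-R/2}$, so that for $y_2\le y_1$ the sets $\mathcal{D}(p_1)$ and $\mathcal{D}(p_2)$ are disjoint and $p_2\notin\mathcal{D}(p_1)$ once $|\Delta u|$ exceeds a constant times $e^{(y_1+y_2)/2}$. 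You need to make this observation explicit; without it the support claim, the integrability threshold $\alpha>1/2$, and the identification of $\sigma^2$ with \eqref{sigmaformula} are all unsupported. Finally, the positivity $\sigma^2>0$ via sector decomposition is only asserted and would need to be carried out (here the paper is equally terse, so this is not a deficit relative to it).
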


Denote by $N$  the standard normal random variable with mean zero and variance one. One might expect that $S^{iso}({\cal P}_{\alpha, n})$, after centering and renormalizing, converges in distribution to $N$ for {\em all}  $\alpha \in (1/2, \infty)$.  The next result shows that this is false.

\begin{theorem} \label{mainCLT}
As $n \to \infty$, for any $\alpha \in (1, \infty)$ we have
\begin{equation} \label{CLTiso}
\frac{ S^{iso}({\cal P}_{\alpha, n}) - \mathbb{E} [S^{iso}({\cal P}_{\alpha, n}) ] } { \sqrt {\Var [S^{iso}({\cal P}_{\alpha, n})]} } \tod N.
\end{equation}
The limit \eqref{CLTiso}  fails for $ \alpha \in (1/2,1)$.
As $n \to \infty$, for any $\alpha \in (1/2, \infty)$ we have
\begin{equation}\label{CLText}
\frac{ S^{ext}({\cal P}_{\alpha, n}) - \mathbb{E} [S^{ext}({\cal P}_{\alpha, n}) ] } { \sqrt {\Var [S^{ext}({\cal P}_{\alpha, n})]} } \tod N.
\end{equation}
\end{theorem}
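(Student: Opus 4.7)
The plan is to transfer the problem from the Poincar\'e disc to the upper half-plane via the scaling $(r,\theta)\mapsto(y,x):=(R-r,\theta e^{R/2}/\pi)$, under which ${\cal P}_{\alpha,n}$ restricted to a bounded defect window converges to a Poisson process $\tilde{\cal P}_\alpha$ of intensity proportional to $e^{-\alpha y}\,dx\,dy$ on the half-plane, and the hyperbolic edge condition $d_H(p,p')\le R$ reduces, to leading order, to the pencil condition $|x-x'|\le e^{(y+y')/2}$. This is the same formalism underlying \eqref{expected-iso} and the constants $\mu$, $\sigma^2$ below. The two CLTs will be obtained by a Penrose--Yukich-style normal approximation for sums of stabilising functionals of Poisson processes, while the failure of the CLT will be obtained by exhibiting a concrete non-Gaussian mechanism.

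For the extreme-point CLT, the key observation is that $\xi^{ext}(p,{\cal P})$ is determined by the event that no process point with strictly smaller defect lies in the pencil around $p$. Using the intensity estimate above and the pencil-width estimate, the expected number of such points is $O(1)$ uniformly in $y(p)$ for every $\alpha>1/2$, and a radius of stabilisation at $p$ -- the smallest $L$ such that altering ${\cal P}$ outside an $L$-pencil does not change $\xi^{ext}(p,\cdot)$ -- has exponentially decaying tails uniformly in $n$ and in $p$. The standard stabilisation-based normal approximation then yields \eqref{CLText}. Positivity of $\sigma^2$ follows from a direct lower bound on the conditional variance at a generic height via an add-one-point argument applied to $\tilde{\cal P}_\alpha$.

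For the isolated-point CLT in the regime $\alpha>1$, the score now depends on \emph{all} neighbours of $p$, and the effective stabilisation radius at a point of defect $y(p)$ is governed by the largest defect $y^*$ of a neighbour in the hyperbolic $R$-ball. The expected number of process points of defect at least $y^*$ that lie in that ball is, by the two estimates above, $\int_{y^*}^{R} e^{-\alpha y'} e^{(y(p)+y')/2}\,dy' = O\bigl(e^{-(\alpha-1/2)y^*} e^{y(p)/2}\bigr)$, which is summable in $y^*$ \emph{exactly when} $\alpha>1$, producing an exponential tail for the stabilisation radius and so delivering \eqref{CLTiso}. This is also where the double jump at $\alpha=1$ in \eqref{Variso} enters: for $\alpha\in(1/2,1]$ the relevant integral diverges, so stabilisation is destroyed and the score becomes sensitive to macroscopic fluctuations.

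For the failure of the CLT when $\alpha\in(1/2,1)$, the plan is to identify the dominant non-Gaussian contribution. Decomposing $S^{iso}$ over defect strips of unit width, the growth rate $\Var [S^{iso}({\cal P}_{\alpha,n})]=\Theta(n^{3-2\alpha})$ tells us that the variance is carried by a critical defect $y_c=y_c(\alpha)$, at which a \emph{single} random angular arc (in rescaled coordinates) of length $\Theta(n^{1-\alpha})$ that is free of deep (small-defect) points produces $\Theta(n^{1-\alpha})$ mutually isolated points. Since the number of such macroscopic arcs is asymptotically Poisson with intensity of order $1$, the variable $S^{iso}-\mathbb{E}[S^{iso}]$ behaves, to leading order, as $n^{1-\alpha}$ times a compound Poisson random variable plus lower-order independent noise, so the standardised variable does not converge to $N$. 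A clean way to formalise this is to show via the Mecke formula that $\liminf_{n\to\infty}\mathbb{E}\bigl[((S^{iso}-\mathbb{E}[S^{iso}])/\sqrt{\Var [S^{iso}]})^{4}\bigr]>3$, the excess coming entirely from the one-arc fourth-moment contribution. The main obstacle is precisely this step: one must locate $y_c$ sharply, verify that the one-arc contribution genuinely dominates the fourth cumulant, and control cross-strip contributions without absorbing the non-Gaussian excess. A subsidiary difficulty in Step~2 is checking the stabilisation tail bound uniformly in $y(p)\in[0,R]$, which is what forces the sharp threshold $\alpha=1$.
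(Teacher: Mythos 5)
Your plan for the extreme-point CLT is a plausible alternative route: the paper instead partitions $(-I_n,I_n]\times[0,R]$ into $\Theta(n/R)$ rectangles, exploits the $O(R^4)$ horizontal extent of the truncated ball $D(p)$ to obtain a dependency graph of maximum degree $O(R^3)$, and invokes Baldi--Rinott. Both exploit the same locality. One caution: your assertion that the stabilization radius of $\xi^{ext}$ has exponential tails ``uniformly in $p$'' is not quite right -- the paper's bound \eqref{stab-bounds} carries a prefactor $e^{\alpha y(p)/2}$, so the tail constant grows with the defect of $p$; this is integrable against $e^{-\alpha y}$ but must be handled explicitly in any Penrose--Yukich-style argument.

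The serious gaps are in the isolated-point halves. For $S^{iso}$ with $\alpha>1$ you again propose stabilization, but $\xi^{iso}(p,\cdot)$ is affected by \emph{every} point of $B(p)$, and $B(p)$ reaches all the way into the deep slab $y\in[R/2,R]$ (equivalently, small hyperbolic radius). There is no finite stabilization radius: a single deep point can flip the isolation status of macroscopically many vertices simultaneously. The paper circumvents this not by stabilization but by conditioning on the event $E_n$ at \eqref{En} that $(\tilde{\cal P}_\alpha\cap D)\cap D([R/2,R])=\emptyset$ (probability $1-O(n^{1-\alpha})$, so this is only $1-o(1)$ for $\alpha>1$) and then applying the Last--Peccati--Schulte second-order Poincar\'e bound on the conditional space, verifying $\gamma_1,\gamma_2,\gamma_3=o(1)$ by careful triple-integral estimates over $[0,R/2]^3$. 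This conditioning \emph{is} where the threshold $\alpha=1$ lives; your criterion that $\int_{y^*}^{R}e^{-\alpha y'}e^{(y(p)+y')/2}dy' = O(e^{-(\alpha-1/2)y^*}e^{y(p)/2})$ be ``summable in $y^*$'' does not discriminate, since the integral in $y^*$ converges for every $\alpha>1/2$. In the paper the threshold actually emerges from $\gamma_3=O(R)\,n^{1-\alpha}$ and $\P(E_n^c)=O(n^{1-\alpha})$. For the failure of the CLT when $\alpha\in(1/2,1)$, the paper's argument is far shorter than your fourth-cumulant plan: with $A_n$ the event that no point has defect exceeding $h_1:=R/(2\alpha)+(\log\log R)/(2\alpha)$, one has $\P(A_n)\to1$ yet the Poincar\'e inequality gives $\Var[\tilde{S}^{iso}_H\mid A_n]=O(\log^2R)\,n^{1/\alpha}=o(n^{3-2\alpha})$, so the standardized variable conditioned on a probability-$(1-o(1))$ event degenerates -- contradicting normal convergence. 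Your proposed route would additionally require a uniform integrability argument before a fourth-moment excess could rule out convergence to $N$, and you yourself flag that locating $y_c$, proving one-arc dominance of the fourth cumulant, and controlling cross-strip contributions are unresolved; none of these are needed for the conditional-variance-collapse argument.
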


The proofs of these  results depend on mapping the point process ${\cal P}_{\alpha, n}$ in the disc $\D$ to a Poisson point process hosted by a rectangle in the upper half-plane. This mapping,  introduced in~\cite{ar:FounMull2017},   transforms the graph $\Gnan$ into an analytically more tractable graph, as seen in Section 2.  In particular, it facilitates the evaluation of the probability content of the intersection of two radius $R$ balls, which is essential to evaluating the covariance of scores at distinct points.
Variance calculations are based on the covariance formula for two points. In the case of isolated points, the upper bound is based on the Poincar\'e inequality.  The lower bound, which turns out to be tight, is
based on a careful analysis of the intersection of the balls of radius $R$ around two typical points. We refer the reader to Sections 3 and 4 for all details.

The determination of variance asymptotics for $S^{ext}( {\cal P}_{\alpha, n})$ is handled by extending stabilization methods.  That is, for a given point $p$, we define a radius of stabilization $R^{\xi}:= R^{\xi^{ext}}$ for $\xi^{ext}$,  in the sense that points at distance farther than  $R^{\xi}$ from $p$ do not affect the property of $p$ being extreme.  We show that the covariance of two points (depending on their interpoint distance and heights) is rather small.  By stabilization, the covariance converges to the covariance of two points in the infinite hyperbolic plane.  We show in Section 5 that though the constants describing the tail behavior of $R^{\xi}$ grow exponentially fast with the height of $p$, it is still possible to extract an explicit integral formula for the scaled variance.

To prove the asymptotic normality \eqref{CLTiso} we use the Poincar\'e inequality for  Poisson functionals  \cite{ar:LPS16}, which bounds the Wasserstein distance in terms of first- and second-order difference operators.  When $\alpha \in (1, \infty)$ there is a high probability event on which these difference operators may be controlled, as vertices of high degree are fewer in this regime.  For $\alpha \in (1/2, 1)$, conditional under the likely event of having no vertex  sufficiently close to the origin  (equivalently having no vertex of significantly high degree), the variance is much smaller than the unconditional variance, and the convergence to the standard normal fails in this regime. Intuitively,  vertices close to the origin generate radius $R$ balls which cover a relatively large part of $\D$ and any vertex lying therein will not be isolated. Surprisingly, when $\alpha \in (1,\infty)$, this phenomenon stops having a significant effect and, therefore one can deduce the asymptotic normality of $S^{iso}({\cal P}_{\alpha, n})$.

The case of extreme points is different, since the extremality status of a point is  influenced only by points of larger radius lying in the ball of radius $R$ around it. This region is typically quite small and makes the
corresponding score functions almost independent.
To prove the central limit theorem \eqref{CLText}, we cut the plane into rectangles and define a dependency graph on the vertex set of such rectangles, so that no points in non-adjacent vertices in this dependency graph can be connected, and we use the central limit theorem of
 Baldi and Rinott  \cite{BR}.

\vskip.3cm

\noindent{\bf Remarks.} (i) We are unaware of  results treating the limit theory for statistics
of $\Gnan$ in the regime $\alpha \in (1/2, 1).$
The paper ~\cite{OY} establishes variance asymptotics and asymptotic normality for the number of copies of trees in $\Gnan$ with at least two vertices, but the authors require  $\alpha \in (1, \infty)$,   save for when counting trees close to the boundary of $\D$.
 The methods of ~\cite{OY} do not appear to treat the limit theory of $S^{iso}({\cal P}_{\alpha, n})$ and $S^{ext}({\cal P}_{\alpha, n})$, as $n \to \infty$.

\vskip.3cm

\noindent (ii) It is  an interesting  problem whether the number of isolated points asymptotically follows a normal law when $\alpha = 1$.  The methods in this paper do not apply, as they
give estimates that are useless. To deal with this case, one likely needs a more detailed treatment of the variance of $S^{iso} ({\cal P}_{\alpha, n})$, giving not only the order of magnitude but the multiplicative constant. 

\vskip.3cm

\noindent (iii) As seen in~\cite{ar:BlaFriedKrohmer18,ar:FriedKrohmer15},  the expected number of cliques of order $k\geq 2$ is $\Theta(n)$ if $\alpha \in (1 - 1/k, \infty)$, whereas the expected number of cliques of order $k \geq 3$ is $\Theta(n^{(1 - \alpha) k })$ if $\alpha \in (1/2, 1 - 1/k).$

\vskip.3cm

\noindent (iv) In dimension $d \geq 3$ we expect that the central limit theorem \eqref{CLText} holds for all $\alpha \in (1/2, \infty)$, where $R$ is suitably defined so that the average degree of the random graph is $\Theta(1)$.
 It is unclear for which $\alpha$ the central limit \eqref{CLTiso} holds in dimension $d \geq 3$.

\vskip.3cm

\noindent (v) It is unclear whether there exists a limiting distribution for $S^{iso}({\cal P}_{\alpha, n})$
when $\alpha \in (1/2,1)$. As we are going to see in Section 6, the variance of  $S^{iso}({\cal P}_{\alpha, n})$ is highly sensitive when
conditioning  on the high probability event of having no points within a certain radius in $\D$. It is plausible that
a central limit theorem holds in such a conditional space.

\vskip.3cm

\noindent (vi)  As elaborated upon in the next subsection, the degree distribution in $\Gnan$ follows a power-law with exponent $2 \alpha + 1$ when $\alpha \in (1/2, \infty)$.  In particular, the degree distribution belongs to $L^2$ when $\alpha \in (1, \infty)$.  It would be worthwhile to better understand the connection between  asymptotic normality of the number of isolated vertices and moments of the degree distribution. 

\vskip.3cm

\noindent{\bf Notation and terminology.}
We say that a sequence of events $E_n \in \mathcal{F}_n$ occur \emph{asymptotically almost surely (a.a.s.)}
if $\lim_{n\to \infty}  \mathbb{P}(E_n) =1$.
Given  $a_n$ and $b_n$  two sequences of positive real numbers,
we  write $a_n \sim b_n$ to denote that $a_n / b_n \rightarrow 1$, as $n\rightarrow \infty$.

\subsection{Degree and connectivity properties of the graph $\Gnan$} \label{properties}

For $\alpha \in (1/2, \infty)$, the tails of the distribution of the degrees in
$\Gnan$  follow a power law with exponent $2\alpha  + 1$; see Krioukov et al.~\cite{ar:Krioukov}.  This was verified rigorously
in~\cite{ar:Gugel}.  
For $ \alpha \in (1/2, 1)$, the exponent is between 2 and 3, as is the case in a number of networks arising  in applications (see for example~\cite{BarAlb} for a list of experimental observations).
The paper  ~\cite{ar:Krioukov} observes that the average degree of $\Gnan$ is determined through the parameter $\nu$
for $\alpha \in (1/2, \infty)$. This was rigorously shown in~\cite{ar:Gugel}. In particular, they show that
the average degree tends to  $8 \alpha^2 \nu / \pi(2\alpha-1)^2$ in probability.
 However, when $\alpha \in (0, 1/2]$, the average degree tends to infinity as $n\to \infty$.
 Thus, in this sense, the regime $\alpha \in (1/2,\infty)$ corresponds to the thermodynamic regime in the
 context of random geometric graphs on the Euclidean plane~\cite{bk:Penrose}.
In~\cite{ar:Foun13+} the degree distribution of a \emph{soft} version of this model is determined. Here, pairs of points that are distant at most $R$ are joined with some probability that is not identically equal to 1.

When $\alpha$ is small, one expects  more points of ${\cal P}_{\alpha, n}$ to be near the origin and one may expect increased graph connectivity. The paper
~\cite{BFMgiantEJC} establishes 
that $\alpha = 1$ is the critical point for the emergence of a giant component in $\Gnan$. In particular, when $\alpha \in (0,1)$, the fraction of the vertices
contained in the largest component is bounded away from 0 a.a.s.~\cite{BFMgiantEJC}, whereas if $\alpha \in (1, \infty)$, the largest
component is sublinear in $n$ a.a.s.
For $\alpha = 1$, the component structure depends on $\nu$. If $\nu$ is large enough,
then a giant component exists a.a.s., but if $\nu$ is small enough, then a.a.s. all components have sublinear size ~\cite{BFMgiantEJC}. Figures~\ref{fig:simulations_I} and~\ref{fig:simulations_II} illustrate these transitions.

\begin{figure}[h!]
\begin{center}
\includegraphics[scale=0.4]{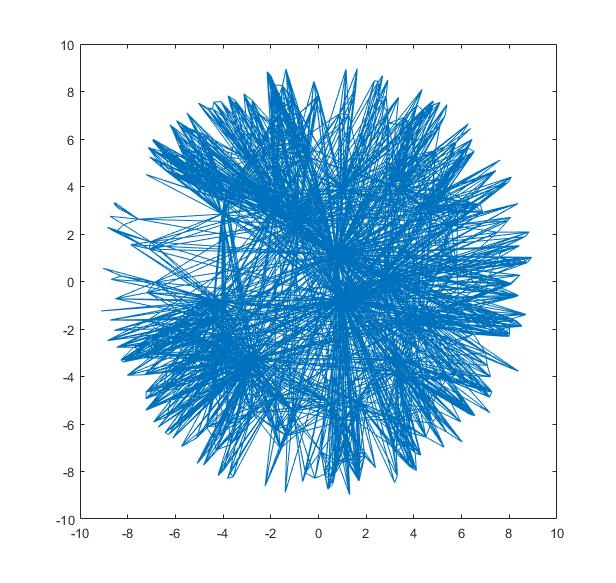}
\includegraphics[scale=0.4]{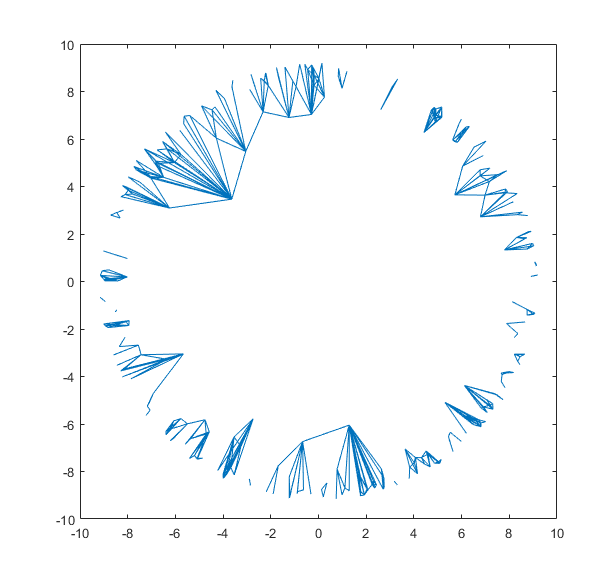}
\caption{Samples of $\Gnan$ for $n=300$, $\nu = 3$ and $\alpha =0.7$ and $2$, respectively, from left to right. Average degree decreases as $\alpha$ increases.}
\label{fig:simulations_I}
\end{center}
\end{figure}

\begin{figure}[h!]
\begin{center}
\includegraphics[scale=0.4]{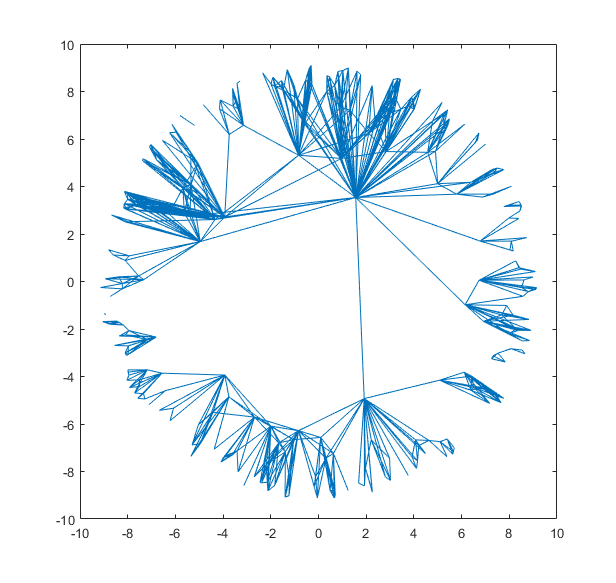}
\includegraphics[scale=0.4]{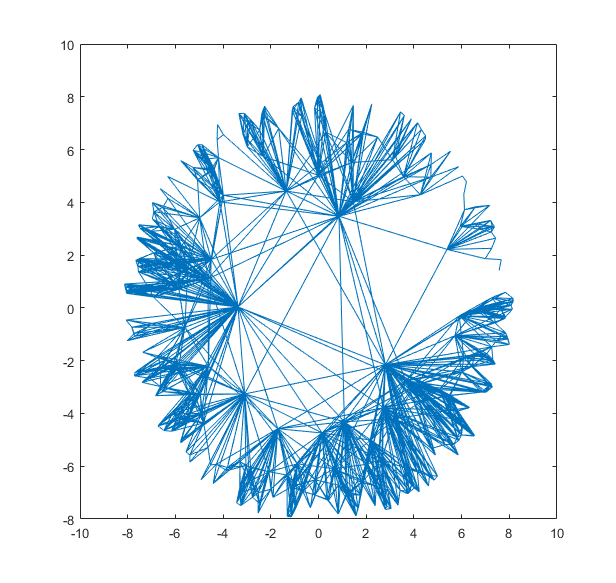}
\caption{Samples of $\Gnan$ for $n=300$, $\alpha = 1$ and $\nu =3$ and $5$, respectively, from left to right. Average degree increases as $\nu$ increases.}
\label{fig:simulations_II}
\end{center}
\end{figure}

 The paper ~\cite{ar:FounMull2017} strengthens these results and shows the fraction of vertices belonging to the largest component converges in probability to a constant which depends on $\alpha$ and $\nu$. Furthermore, for $\alpha =1$, there exists a critical value $\nu_0 \in (0, \infty)$ such that when $\nu$ crosses $\nu_0$ a giant component emerges a.a.s. \cite{ar:FounMull2017}. For $\alpha \in (0, 1)$ the second largest component has polylogarithmic order a.a.s.;  see  \cite{KiwiMit} and \cite{KiwiMit2017+}.
 For $\alpha \in (0, 1/2)$ we have that $\Gnan$ is a.a.s. connected, whereas $\Gnan$ is disconnected for
$\alpha \in (1/2, \infty)$  ~\cite{BFMconnRSA}.  For $\alpha =1/2$, the probability of
connectivity tends to a certain constant  given explicitly in ~\cite{BFMconnRSA}.

Apart from the component structure, the geometry of this model has been also considered.
In~\cite{KiwiMit} and~\cite{ar:FriedKrohmerDiam} polylogarithmic upper bounds on the diameter are
shown. These were improved shortly afterwards in~\cite{ar:MullerDiam} where a logarithmic upper bound on the diameter is established.
Furthermore, in~\cite{ar:AbdBodeFound} it is shown that for $\alpha \in (1/2,1)$ the largest component has doubly logarithmic typical distances and it forms what is called an ultra-small world.

\section{Auxiliary results}

\subsection{Approximating a hyperbolic ball}

We characterize when two points in  $\D$ are within hyperbolic distance $R$. In particular the next lemma approximates hyperbolic balls by analytically more tractable sets, reducing a statement about hyperbolic distances between two points to a statement about their relative angle.
For a point $p \in \D$, we let $\theta (p) \in (-\pi, \pi]$ be the angle
$\widehat{p O q}$ between $p$ and a (fixed) reference point  $q \in \D$ (where positive angle is determined 
by moving from $q$ to $p$ in the
anti-clockwise direction).
For points $p,p'\in \D$ we denote by $\theta (p,p')$ their relative angle:
\begin{equation*}
\theta (p,p') := \min \{ |\theta(p) - \theta(p')|, \ 2\pi - |\theta (p) - \theta (p')| \}.
\end{equation*}
For any $p \in \D$ recall that $r(p)$ denotes its radius (hyperbolic distance to the origin) whereas  $y(p):= R - r(p)$, or
more succinctly $y := R - r$.  Thus for $p \in \D$, we shall write
$p := (\theta(p), y(p))$. The hyperbolic law of cosines relates the relative angle $\theta (p,p')$ between two points with their hyperbolic
distance:
\begin{equation} \label{RE}
\begin{split}
\cosh (d_H(p,p'))  =
& \cosh (r(p) ) \cosh (r(p'))  - \sinh (r(p)) \sinh (r(p'))
\cos ( \theta (p,p') ).
\end{split}
\end{equation}
For $r,r' \in [0,R]$, we let $\theta_R (r,r')$ be the value of $\theta (p,p') \in [0,\pi]$ satisfying \eqref{RE}, having set $d_H(p,p') = R$, for two points $p,p' \in \D$ with $r(p)=r$ and $r(p')=r'$.
As $\cos (\cdot )$ is decreasing in $[0,\pi]$, it follows that
$d_H (p,p') \in [0,R]$ if and only if $\theta (p,p') \leq \theta_R (r(p),r(p'))$.

When $y(p)$ and $y(p')$ are not too large, our next result  estimates $\theta_R(r(p),r(p'))$ as a function
of $y(p)$ and $y(p')$. To prepare for mapping $\D$ to a rectangle in $\mathbb{R} \times \mathbb{R}^+$ having length proportional to $\frac12 e^{R/2}$, we re-scale $\theta_R(r(p),r(p'))$ by a factor of $\frac12 e^{R/2}$.
The following lemma appears in a stronger form in ~\cite{ar:FounMull2017}.  Here and elsewhere we put
\be \label{defH}
H := 4 \log R.
\ee

The proof of the next lemma is in Section~\ref{sec:basic_lemmas}.
\begin{lemma}\label{lem:relAngle_generic}  Given $p$ and $p'$ in $\D$,  $y:= y(p) := R - r$, and $y':= y(p') := R - r'$, with
$r, r' \in [0,R]$ we set
$$
\Delta(r,r') := \frac12 e^{R/2} \theta_R(r, r') =   \frac12 e^{R/2} \arccos\left( \frac{ \cosh r \cosh r' - \cosh R} { \sinh r \sinh r' } \right).
$$
For every $\eps \in (0, 1/3)$ there exists a $C:= C(\eps) \in (0, R)$ such that the following holds.
\begin{enumerate}
\item[ (i)] If  $r + r' \in (R + C,2R]$, i.e., if  $y(p) + y(p') \in [0, R- C)$, then
\begin{equation}\label{eq:asymp1.0}
(1- \eps) e^{\frac12(y+y')} \leq \Delta( r, r') \leq (1 + \eps) e^{\frac12(y+y')}.
\end{equation}
\item[(ii)] If $r,r' \in [R- H,R]$, i.e., if $y(p), y(p') \in [0, H]$, then 
\be \label{lam1}
\Delta (r,r' ) = (1 + \lambda_n (r,r')) e^{\frac12(y+y')}
\ee
where $\lambda_n (r,r') = o(1)$ as $n\to \infty$, uniformly over all $r,r'\in [R-H,R]$.
\item[(iii)] In part (i) above, one can take $\eps := \eps (n) \to 0$ and $C := C (n) \to \infty$ as $n\to \infty$.
In  particular we may relate $\eps$ and $C$ by $\eps = \Theta(e^{-C})$.
\end{enumerate}
\end{lemma}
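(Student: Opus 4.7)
The plan is to obtain a closed-form expression for $B := 1-A$, where $A := (\cosh r \cosh r' - \cosh R)/(\sinh r \sinh r')$ is the argument of $\arccos$ in the definition of $\Delta(r,r')$, and then apply the expansion $\arccos(1-B) = \sqrt{2B}\,(1+O(B))$ as $B\to 0$. Using $\cosh r \cosh r' - \sinh r\sinh r' = \cosh(r-r')$ together with the sum-to-product identity $\cosh R - \cosh(r-r') = 2\sinh\bigl(\frac{R+r-r'}{2}\bigr)\sinh\bigl(\frac{R+r'-r}{2}\bigr)$, I would rewrite
\begin{equation*}
1-A \;=\; \frac{\cosh R - \cosh(r-r')}{\sinh r \sinh r'} \;=\; \frac{2\sinh\bigl(\frac{R+r-r'}{2}\bigr)\sinh\bigl(\frac{R+r'-r}{2}\bigr)}{\sinh r \sinh r'}.
\end{equation*}

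I would then estimate each of the four hyperbolic sines via $\sinh x = \tfrac{1}{2}e^x(1-e^{-2x})$. In regime (i), the hypothesis $y+y' < R-C$ forces $r,r' > C$; moreover, since $r,r'\leq R$, one has $(R\pm(r-r'))/2 \geq \min(r,r')/2 > C/2$, so each of the four factors equals its leading exponential up to a multiplicative $(1+O(e^{-C}))$. Since the two exponents in the numerator sum to $R$ and those in the denominator sum to $r+r' = 2R - y - y'$, collecting terms yields $1-A = 2 e^{y+y'-R}(1 + O(e^{-C}))$, which is itself of order $e^{-C}$. Substituting into $\arccos(1-B) = \sqrt{2B}(1+O(B))$ gives $\theta_R(r,r') = 2 e^{(y+y'-R)/2}(1+O(e^{-C}))$, and so $\Delta(r,r') = \tfrac12 e^{R/2}\theta_R(r,r') = e^{(y+y')/2}(1+O(e^{-C}))$, which is \eqref{eq:asymp1.0} with $\eps = \Theta(e^{-C})$. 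This simultaneously proves (i) and (iii); for (iii) one just lets $C=C(n)\to\infty$ at any prescribed rate, so that $\eps(n) = \Theta(e^{-C(n)}) \to 0$. For (ii), the hypothesis $y,y'\in [0,H]$ with $H=4\log R$ gives $\min(r,r') \geq R-H$, so all four hyperbolic sines have arguments at least $(R-H)/2 \to \infty$, the multiplicative errors $e^{-2x}$ are each $O(R^8 e^{-R}) = o(1)$, and $B \leq 2 R^8 e^{-R} = o(1)$; this yields $\lambda_n(r,r') = o(1)$ uniformly in $r,r'\in[R-H,R]$.

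The main obstacle is bookkeeping: one must verify that the five multiplicative error factors (four from the hyperbolic sines and one from the $\arccos$ expansion) can all be uniformly absorbed into a single $O(e^{-C})$ factor, particularly near the edge of regime (i) where $y$ or $y'$ approaches $R-C$ from below and one of $r,r'$ is only of order $C$, where the approximation $\sinh x \approx e^x/2$ is weakest. The key quantitative input making this work is the matched bound $1-A = \Theta(e^{-C})$, which is of the same order as each individual sine error, so that the implicit constants do not blow up when one takes $C \to \infty$ in part (iii).
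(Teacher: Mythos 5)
Your proof is correct and follows essentially the same route as the paper: after applying the sum-to-product identity your expression for $1-A$ is term-for-term identical to the paper's quantity $x$ at \eqref{defx} (the paper derives it by directly expanding in exponentials rather than via $\cosh R - \cosh(r-r') = 2\sinh\frac{R+r-r'}{2}\sinh\frac{R+r'-r}{2}$), and the subsequent error analysis — bounding each factor to within $1+O(e^{-C})$ and invoking the $\arccos(1-v)=\sqrt{2v}(1+O(v))$ expansion — matches the paper's use of \eqref{eq:x_bounds} and \eqref{eq:arccos_expansion}. Your sum-to-product derivation is a little cleaner than the paper's, but the key identity, the observation $\min(r,r')>C$, and the matched $\Theta(e^{-C})$ scaling in (iii) are all the same.
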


Recall that $\Dis{r}$ denotes the disc of (hyperbolic) radius $r$ centered at the origin $O$. For any $h \in [0,R)$ we let $\A_{h}$ denote the
annulus  $\D \setminus \Dis{R-h}$. Throughout we shall use caligraphic letters to denote subsets of $\Dis{R}$.
For $p \in \D$ we let $\HBall{p}:= {\cal B}_R(p) \cap \D$.
We now approximate $\HBall{p}$ whenever $r(p) \in (C,R]$, with $C$ as in Lemma \ref{lem:relAngle_generic}.  This goes as follows.

By the triangle inequality, given $p \in \Dis{R}$, any point with defect radius at most $y(p):= R - r(p)$ is also within distance $R$ from $p$.
To  approximate $\HBall{p}$ from above, we will take a superset of this set, namely the set of
points of radius at most $y(p) - C$,  with  $C:=C(\eps)$ as in Lemma~\ref{lem:relAngle_generic}.  We set $\OneUp := 1 + \eps$ and $\OneLow:= 1-\eps$
and put
\begin{align*} \label{superset}
\HBallUp{p} & := \HBallUp{p, \eps} \\
& := \{p' \in \D : y(p') + y(p) \in [0, R - C),  \  \theta (p,p') \leq \OneUp \cdot 2e^{\frac{1}{2}(y(p)+y(p')- R)}\} \nonumber \\
& \ \ \ \ \ \ \ \ \cup \{p' \in \D : y(p') + y(p) \in [R - C, 2R] \}
\end{align*}
and
\begin{align*} 
\HBallLow{p} & := \HBallLow{p, \eps} \\
&:= \{p' \in \D \ : y(p') + y(p) \in [0, R - C),  \  \theta (p,p') \leq \OneLow \cdot 2e^{\frac{1}{2}(y(p)+y(p')- R)}\}.
\end{align*}
For $\eps \in (0, 1/3)$, $C: = C(\eps) >0$ as in Lemma~\ref{lem:relAngle_generic}(i),
and  $p \in \D$ with  $r(p) \in (C,R]$, the inequality \eqref{eq:asymp1.0} yields
the following inclusions:
\begin{equation} \label{eq:ball_inclusion}
\HBallLow{p} \subset \HBall{p} \subset \HBallUp{p}.
\end{equation}

In our calculations for $\mathbb{E} [\xi^{ext}(p,{\cal P}_{\alpha, n})]$ we will need the 
truncated subset of $\HBall{p}$ consisting of points with height coordinates at most $y(p)$, namely 
\be \label{trunc}
\HD{p} := \{ p' \in \HBall{p} \ : \ y(p') \leq y(p)\}.
\ee
A point $p\in \D \cap {\cal P}_{\alpha, n}$ is extreme with respect to  ${\cal P}_{\alpha, n}$ if and only if $\HD{p} \cap  {\cal P}_{\alpha, n} = \{p\}$.
Lemma~\ref{lem:relAngle_generic}(ii) implies that if $y(p) \in [0, H]$, then
\begin{equation}\label{eq:HD_def}
\HD{p} := \{p': \ y(p') \leq y(p), \ \theta (p,p') \leq (1+\lambda_n (p,p')) e^{\frac12 (y(p)+ y(p') - R)} \}.
\end{equation}

\subsection{Properties of $\Gnan$}

The density of the defect radius is close to the exponential density with parameter $\alpha$.  The proof of this fact is based on elementary algebraic manipulations and appears in  Section~\ref{sec:basic_lemmas}.

\begin{lemma}\label{lem:dist}
Let $\bar\rho_{\alpha, n} (y):= \rho_{\alpha, n} (R-y), \ y \in [0,R),$ be the probability density  of the defect radii.
For all  $\alpha \in (1/2, \infty)$ we have
\be \label{approx}
|\bar{\rho}_{\alpha, n}(y) - \alpha e^{-\alpha y}| < \frac{2\alpha }{e^{\alpha R}-2} = O(n^{-2\alpha}), \ y \in [0,R].
\ee
\end{lemma}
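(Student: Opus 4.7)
The plan is to prove the bound by a direct algebraic manipulation, since the claim is essentially that the exponential density $\alpha e^{-\alpha y}$ arises by keeping only the dominant term in $\sinh(\alpha(R-y))/(\cosh(\alpha R) - 1)$ and discarding pieces that are exponentially small in $R$. So first I would substitute $r = R - y$ in the definition of $\rho_{\alpha,n}$ to get
\[
\bar\rho_{\alpha,n}(y) = \alpha \cdot \frac{\sinh(\alpha(R-y))}{\cosh(\alpha R) - 1} = \alpha \cdot \frac{e^{\alpha(R-y)} - e^{-\alpha(R-y)}}{e^{\alpha R} + e^{-\alpha R} - 2},
\]
and then form the difference with $\alpha e^{-\alpha y}$ over the common denominator.

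Next I would simplify the numerator of $\bar\rho_{\alpha,n}(y) - \alpha e^{-\alpha y}$. Writing $e^{\alpha(R-y)} = e^{\alpha R} e^{-\alpha y}$ shows that the leading term $e^{\alpha R} e^{-\alpha y}$ coming from $\sinh(\alpha(R-y))$ cancels the corresponding piece $e^{-\alpha y} \cdot e^{\alpha R}$ contributed by $\alpha e^{-\alpha y}$ times the denominator. After this cancellation the numerator collapses to
\[
2 e^{-\alpha y} - e^{-\alpha R} \bigl( e^{\alpha y} + e^{-\alpha y}\bigr) \;=\; 2 e^{-\alpha y} - 2 e^{-\alpha R} \cosh(\alpha y),
\]
times $\alpha$, divided by $e^{\alpha R} + e^{-\alpha R} - 2$.

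To finish I would bound the numerator and denominator separately. For $y \in [0,R]$ both $2 e^{-\alpha y} \le 2$ and $2 e^{-\alpha R} \cosh(\alpha y) \le 2 e^{-\alpha R}\cosh(\alpha R) = 1 + e^{-2\alpha R} \le 2$, so the absolute value of the numerator is at most $2$ (these two positive quantities have the opposite sign contributions, so the absolute value of their difference is at most the larger of the two). For the denominator, discard $e^{-\alpha R} > 0$ to get $e^{\alpha R} + e^{-\alpha R} - 2 \ge e^{\alpha R} - 2$. Combining yields the stated bound $|\bar\rho_{\alpha,n}(y) - \alpha e^{-\alpha y}| < 2\alpha/(e^{\alpha R} - 2)$, and substituting $R = 2\log(n/\nu)$ gives $e^{\alpha R} = (n/\nu)^{2\alpha}$, hence the asymptotic $O(n^{-2\alpha})$ estimate. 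There is no real obstacle here; the only subtle point is to exploit the cancellation of the $e^{\alpha R} e^{-\alpha y}$ terms before attempting any estimation, since otherwise one is subtracting two quantities of order $1$ and would have to control their difference more carefully.
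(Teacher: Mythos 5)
Your proof is correct and is in the same elementary spirit as the paper's: both rely on the cancellation of the leading $\alpha e^{\alpha R}e^{-\alpha y}$ term and then bound the small remainder against the denominator $e^{\alpha R}+e^{-\alpha R}-2 > e^{\alpha R}-2$. The only organizational difference is that the paper establishes the upper and lower bounds by two separate chains of inequalities (e.g.\ dropping $e^{-\alpha(R-y)}$ in the numerator and $e^{-\alpha R}$ in the denominator for the upper bound, and a slightly longer expansion for the lower bound), whereas you compute the difference $\bar\rho_{\alpha,n}(y)-\alpha e^{-\alpha y}$ once over a common denominator and read off the two-sided bound directly, which is a bit cleaner and more symmetric.
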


One does not expect to observe isolated and extreme points close to the origin.  
The following lemma makes this precise and shows  that the isolated and extreme points
a.a.s. have  defect radii less than $H:=4\log R$.
\begin{lemma} \label{lem:no_isolated_high_points}
Let $p \in \D$.  If $y(p) \in (H, R]$ then for all $\alpha \in (1/2, \infty)$ we have
$$ \mathbb{E}[ {\xi ^{iso}}(p, {\cal P}_{\alpha, n} \cup \{p\})] = n^{-\Omega(\log n)}, \ \   \mathbb{E} [{\xi ^{ext}}(p, {\cal P}_{\alpha, n} \cup \{p\})] = n^{-\Omega(\log n)}.$$
\end{lemma}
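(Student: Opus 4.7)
The plan is to reduce both expectations to void probabilities of a Poisson process and then lower bound the $\kappa_{\alpha,n}$-mass of the relevant regions using the two auxiliary lemmas. First, since ${\cal P}_{\alpha,n}$ is a Poisson point process with intensity measure $n\kappa_{\alpha,n}$, and since a single deterministic point has $\kappa_{\alpha,n}$-measure zero, the probability that no point of ${\cal P}_{\alpha,n}$ lies in a Borel set $A\subset \D$ equals $\exp(-n\kappa_{\alpha,n}(A))$. Hence
\[
\mathbb{E}\bigl[\xi^{iso}(p,{\cal P}_{\alpha,n}\cup\{p\})\bigr] = \exp\bigl(-n\kappa_{\alpha,n}(\HBall{p})\bigr),\qquad
\mathbb{E}\bigl[\xi^{ext}(p,{\cal P}_{\alpha,n}\cup\{p\})\bigr] = \exp\bigl(-n\kappa_{\alpha,n}(\HD{p})\bigr).
\]
It therefore suffices to show that when $y:=y(p)>H=4\log R$, each of $n\kappa_{\alpha,n}(\HBall{p})$ and $n\kappa_{\alpha,n}(\HD{p})$ is $\Omega(R^2)=\Omega((\log n)^2)$, since then the exponentials are at most $\exp(-\Omega((\log n)^2))=n^{-\Omega(\log n)}$.

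Next I lower bound $\kappa_{\alpha,n}(\HBall{p})$. Fix $\eps\in(0,1/3)$ and let $C=C(\eps)$ be as in Lemma~\ref{lem:relAngle_generic}(i). If $y\le R-C-1$, I use the inclusion $\HBallLow{p}\subset\HBall{p}$ together with Lemma~\ref{lem:dist} to estimate, in the $(y',\theta)$ coordinates,
\[
\kappa_{\alpha,n}(\HBall{p})\ \ge\ \frac{1}{2\pi}\int_{0}^{R-C-y}\bar\rho_{\alpha,n}(y')\,\cdot 4(1-\eps)e^{(y+y'-R)/2}\,dy'
\ \ge\ c_1(\alpha,\eps)\,e^{(y-R)/2},
\]
where the last inequality uses $\bar\rho_{\alpha,n}(y')\ge (\alpha/2) e^{-\alpha y'}$ for $n$ large (Lemma~\ref{lem:dist}) and the fact that for $\alpha>1/2$ the integral $\int_{0}^{R-C-y}e^{(1/2-\alpha)y'}dy'$ is bounded below by a positive constant whenever $R-C-y\ge1$. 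If instead $y>R-C-1$, then $p$ is so close to the origin that $\HBall{p}$ contains the annulus $\A_{R-C-1}$, and a direct integration using Lemma~\ref{lem:dist} shows $\kappa_{\alpha,n}(\HBall{p})$ is bounded below by a positive constant. In both cases, since $n=\nu e^{R/2}$ and $y>H$, we get
\[
n\kappa_{\alpha,n}(\HBall{p})\ \ge\ c_2\,\nu\,e^{y/2}\ \ge\ c_2\,\nu\,e^{H/2}=c_2\,\nu\,R^{2}\ =\ \Omega((\log n)^2),
\]
proving the isolated bound.

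For the extreme bound, the same computation goes through with $\HD{p}$ in place of $\HBall{p}$, because the extra restriction $y(p')\le y(p)$ does not exclude the portion of $\HBallLow{p}$ with $y'\in[0,1]$ whenever $y>H>1$. Concretely, restricting the previous integral to $y'\in[0,1]$ and using $\bar\rho_{\alpha,n}(y')\sim\alpha e^{-\alpha y'}$ gives
\[
\kappa_{\alpha,n}(\HD{p})\ \ge\ \frac{2(1-\eps)}{\pi}e^{(y-R)/2}\int_{0}^{1}\bar\rho_{\alpha,n}(y')\,e^{y'/2}\,dy'\ \ge\ c_3\,e^{(y-R)/2},
\]
valid provided $y\le R-C-1$; in the remaining regime $y>R-C-1$ the set $\HD{p}$ contains a fixed-measure portion of $\D$ near the origin. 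Multiplying by $n$ and again using $y>H=4\log R$ and $n=\nu e^{R/2}$ yields $n\kappa_{\alpha,n}(\HD{p})=\Omega(R^{2})=\Omega((\log n)^{2})$, as required. The main technical point, and the only place requiring care, is handling the regime where $y$ is so close to $R$ that Lemma~\ref{lem:relAngle_generic}(i) no longer applies; this is dispatched by noting that such a $p$ sits very close to the origin and its ball (and hence $\HD{p}$) trivially has mass bounded away from $0$.
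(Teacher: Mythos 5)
Your overall strategy is the same as the paper's: rewrite $\mathbb{E}[\xi^{iso}(p,\cdot)]$ and $\mathbb{E}[\xi^{ext}(p,\cdot)]$ as void probabilities, split on whether Lemma~\ref{lem:relAngle_generic}(i) applies, and in the first regime bound $\kappa_{\alpha,n}(\HBall{p})$ from below via the inclusion $\HBallLow{p}\subset\HBall{p}$ and Lemma~\ref{lem:dist}. That regime, including the restriction to $y'\in[0,1]$ for the extreme score, is handled correctly and produces the required exponent $\Omega(R^2)$.

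The near-origin case, however, rests on a false containment. You assert that when $y(p)>R-C-1$ (so $r(p)<C+1$) the ball $\HBall{p}$ contains the annulus $\A_{R-C-1}$. By the paper's definition $\A_{R-C-1}=\D\setminus\Dis{C+1}$, the set of points of radius exceeding $C+1$; this includes points at radius arbitrarily close to $R$ at angular separation near $\pi$ from $p$, which lie at hyperbolic distance close to $R+r(p)>R$. So $\A_{R-C-1}\not\subset\HBall{p}$. What the triangle inequality does give, and what the paper uses, is $\Dis{R-r(p)}\subset\HBall{p}$, hence $\Dis{R-C-1}\subset\HBall{p}$, whose $\kappa_{\alpha,n}$-measure $(\cosh(\alpha(R-C-1))-1)/(\cosh(\alpha R)-1)$ tends to $e^{-\alpha(C+1)}>0$. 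For the extreme case the right set is the annular region $\{r(p)\le r(p')\le R-C-1\}$, whose $\kappa_{\alpha,n}$-mass also stays bounded away from $0$; note that, contrary to your phrasing, this mass concentrates near radius $R-C-1$ and not near the origin, since any region inside a fixed-radius disc about the origin has $\kappa_{\alpha,n}$-measure tending to $0$. The intended conclusion is correct, but the set you invoke is the wrong one and that step, as written, does not go through.
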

\begin{proof} Let $\eps \in (0, 1/3)$ and $C:= C(\eps)$ be as in Lemma~\ref{lem:relAngle_generic}.
First assume that $y(p) \in [0, R-C)$.
We may bound $ \mathbb{E}[ \xi^{iso}(p, {\cal P}_{\alpha, n}\cup \{p\})] $  by the probability that
$\HBall{p} \cap \A_{H}$ is empty.
Using the first inclusion in ~\eqref{eq:ball_inclusion} and recalling the definition of $\kappa_{\alpha, n}$ at \eqref{eq:kappa-measure}, we have (for $n$ sufficiently large):
\begin{equation*}
\begin{split}
 \mathbb{E} [\xi ^{iso}(p, {\cal P}_{\alpha, n}\cup \{p\})] & \leq \exp \left(  -n \kappa_{\alpha, n}
 (\HBall{p} \cap \A_{H} )\right) \\
&= \exp \left( - n \frac{2(1-\eps)}{2\pi}  e^{-R/2} e^{y(p)/2} \int_0^{4\log R} e^{y/2} \bar{\rho}_{\alpha, n}(y) dy \right) \\
& <  \exp \left( - \frac{\nu (1 -2\eps)}{\pi} e^{y(p)/2} \int_0^{4\log R} e^{(1/2 - \alpha ) y}dy \right)  \\
&= \exp \left(-  \Omega (e^{2 \log R})\right) = \exp \left( - \Omega (R^2  )\right) = n^{-\Omega(\log n)},
\end{split}
\end{equation*}
where the inequality follows by Lemma~\ref{lem:dist}.

Suppose now that $y(p) \in [R-C, R]$, i.e, $r(p) \in [0,C]$.  By the triangle inequality  any point of $\D$ of radius less than $R-C$ is within hyperbolic distance $R$ from $p$.
This implies that
\begin{equation*}
 \mathbb{E} [\xi ^{iso}(p, {\cal P}_{\alpha, n}\cup \{p\})]   \leq \exp \left(  -n \kappa_{\alpha, n} (\Dis{R-C})\right).
\end{equation*}
Recalling \eqref{eq:kappa-measure} we obtain
\begin{equation*}
\begin{split}
 \kappa_{\alpha, n} (\Dis{R-C}) = \int_0^{R-C} \alpha \frac{\sinh \alpha r}{\cosh(\alpha R) - 1} dr = \frac{\cosh(\alpha (R-C) - 1}{\cosh(\alpha R) - 1} = \Theta (1),
\end{split}
\end{equation*}
whereby $ \mathbb{E} [ \xi ^{iso}(p, {\cal P}_{\alpha, n}\cup \{p\})] = \exp \left(  -\Omega (n ) \right).$

These upper bounds are also valid for $  \mathbb{E} [ \xi ^{ext}(p, {\cal P}_{\alpha, n}\cup \{p\})] $, with the exception of the last integral which would start from $r(p) \in (0, R]$ instead of from $r(p) = 0$. However, the asymptotic growth of this integral is still $\Theta (1)$.
\end{proof}

\subsection{Mapping $\D$ to $\mathbb{R}^2$}

To further simplify our calculations, we will transfer our analysis from $\D$ to $\mathbb{R}^2$, making  use of a mapping introduced in~\cite{ar:FounMull2017}.
We set
$$
I_n:= \frac{\pi}{2}e^{R/2} =  \frac{\pi}{ 2 \nu} \cdot n.
$$
For any subset $E \subseteq [0, R]$, define the rectangular domain
$$
{D} (E) :=  (-I_n, I_n] \times E
$$
and we put
\be \label{defiD}
D := D([0, R]) := (-I_n, I_n] \times [0,R].
\ee
For $p \in \D$, recall that we write $p:=(y(p), \theta(p))$, with $y(p)$ the defect radius and $\theta(p)$ the angle with respect to a reference point.
We re-scale the angle $\theta (p)$ by $2 e^{-R/2}$, setting $x(p) := \frac12 \theta (p) e^{R/2}$.
This defines the map $\Phi : \D \to {D}$, mapping $(\theta (p),  y(p))  \mapsto (x (p), y(p)) $.

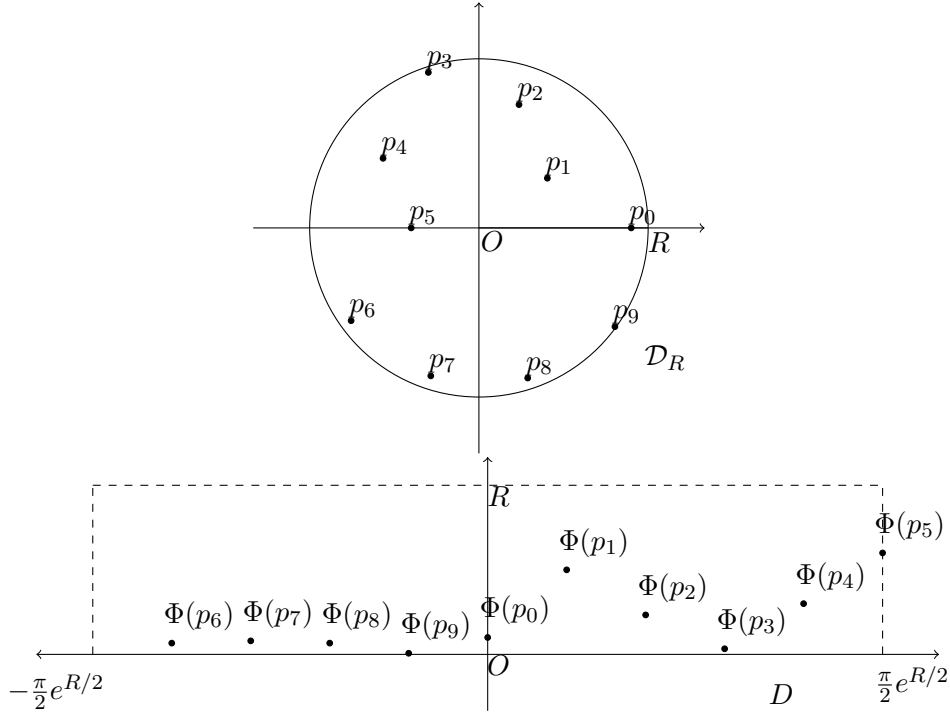
\begin{figure}
\begin{center}
\begin{tikzpicture}[scale=0.75]
\draw (0,0) circle [radius = 3]; 
\draw [->] (-4,0) -- (4,0);
\draw [->] (0,-4) -- (0,4);
\node [fill=none, below right, draw=none] at (3,-2) {$\mathcal{D}_R$};
\node [fill=none, below right, draw=none] at (-1:3) {$R$};
\node [fill=none, below right, draw=none] at (0.04,-0.04) {$O$};
\draw (0,0) -- (0:3);
\foreach \r [count=\a from 0] in {2.7,1.5,2.3,2.9,2.1,1.2,2.8,2.76, 2.8,2.98}
{\node[fill=none, above right, draw=none]  at (\a*36:\r) {$p_{\a}$};
\draw [fill = black] (\a*36:\r) circle [radius  =0.05];}
\end{tikzpicture}

\begin{tikzpicture}[scale=0.75]
\draw [->] (0,0) -- (8,0);
\draw [->] (0,0) -- (-8,0);
\draw [dashed] (7,0) -- (7,3);
\draw [dashed] (-7,0) -- (-7,3);
\draw [dashed] (-7,3) -- (7,3);
\draw [->] (0,-1) -- (0,3.5); 
\node[fill=none, below right, draw=none]  at  (0,3)  {$R$};
\node[fill=none, below right, draw=none]  at  (0,0)  {$O$};
\node[fill=none, below right, draw=none]  at  (7,0)  {$\frac{\pi}{2}e^{R/2}$};
\node[fill=none, below left, draw=none]  at  (-7,0)  {$-\frac{\pi}{2} e^{R/2}$};
\foreach \r [count=\a from 0] in {2.7,1.5,2.3,2.9,2.1,1.2}
{\node[fill=none, above right, draw=none]  at  (14*\a/10,3-\r)  {$\Phi(p_{\a})$};
\draw [fill = black] (14*\a/10,3-\r) circle [radius  =0.05];}
\foreach \r [count=\a from 6] in {2.8,2.76, 2.8,2.98}
{\node[fill=none, above right, draw=none]  at  (14*\a/10-14,3-\r)  {$\Phi(p_{\a})$};
\draw [fill = black] (14*\a/10-14,3-\r) circle [radius  =0.05];}

\node [fill=none, below right, draw=none] at (5,-0.5) {$D$};
\end{tikzpicture}
\end{center}
\caption{The mapping $\Phi : \mathcal{D}_R \to D$.}
\end{figure}

Put $\beta := {2\nu \alpha/ \pi}$.
The map $\Phi$ sends ${\cal P}_{\alpha, n}$ to the Poisson point process $\tilde{\cal P}_{\alpha, n}$ on ${D}$ with intensity density
\be \label{Pnav}
d \tilde{\cal P}_{\alpha, n}(x,y) = ( \beta e^{- \alpha y} + \epsilon_n) dy dx,  \  (x,y) \in {D} ,
\ee
where, recalling Lemma \ref{lem:dist}, we have $\epsilon_n =  O(n^{ - 2 \alpha}) = o(n^{-1})$, since $\alpha \in  (1/2, \infty)$.

The analogue of the relative angle is defined as follows.
For $x, x' \in ( -I_n ,  I_n]$, we let
$$
|x-x'|_{\Phi} := \min \left\{ |x-x'|, \ 2 I_n - |x -x'| \right\}.
$$
When considering the geometry of hyperbolic balls inside $\Rcal$, it will
be convenient to use arithmetic on the $x$-axis modulo $2I_n$.
In particular, for $x_1, x_2 \in (-I_n, I_n]$, we write $x_1 <_\Phi x_2$, if $x_1  < x_2$
and $|x_1-x_2|_\Phi = |x_1-x_2|$ or $x_1 > x_2$ and $|x_1-x_2|_\Phi =
2 I_n - |x_1 -x_2|$. This definition naturally extends to all other types of inequalities.
Also, for any $x_1, x_2 \in \mathbb{R}$, we write $x_1=_\Phi x_2$, if $x_1 = x_2 \mod 2I_n$.

\noindent {\bf Mapping balls in ${\cal D}$ to balls in $D$}.
We set
$$
B(p):=\Phi (\HBall{p} ),  \ \BallDown := \Phi (\HBallLow{p} ), \ \text{and} \ \BallUp := \Phi (\HBallUp{p} ).
$$

Thus, for  $p \in {D}$ with $y(p) \in [0,  R - C)$ and $\eps > 0$, we have
\begin{equation*}
B^{-}(p) : =   B^{-}(p, \eps) :=  \{ (x,y): \ y + y(p) \in [0, R- C), \ |x-x(p)|_{\Phi} <  \OneLow \cdot e^{\frac12 (y+ y(p))} \}
\end{equation*}
and
\begin{equation*}
B^{+}(p) : =   B^{+}(p, \eps) : = \{ (x,y): \ |x-x(p)|_\Phi < \OneUp \cdot e^{\frac12 (y+ y(p))}\} \cup \{ (x,y): \ y + y(p) \in (R- C, 2R] \}.
\end{equation*}
For $p \in \D$ with $y(p) \in [0, R - C)$ note that $\Phi$ transforms the set inclusion
 ~\eqref{eq:ball_inclusion} into
\begin{equation} \label{eq:inclusion_mapped}
B^{-}(p) \subset B(p)  \subset B^{+}(p).
\end{equation}

\medskip

\noindent
{\bf Approximating $S^{iso}({\cal P}_{\alpha, n})$ and $S^{ext}({\cal P}_{\alpha, n})$ on ${D}$.}
Let $\tilde{p} : = (x( \tilde{p}), y( \tilde{p}))$
be the image of $p$ by $\Phi$. For  $\tilde{p} \in \tilde{{\cal P}}_{\alpha, n}$ we define
$\tilde \xi^{iso}( \tilde{p}, \tilde{{\cal P}}_{\alpha, n}) = \xi^{iso}( p, {\cal P}_{\alpha, n})$. In other words,
\begin{equation*}
 \tilde{S}^{iso}( \tilde{{\cal P}}_{\alpha, n} ) := \sum_{\tilde{p}  \in \tilde{{\cal P}}_{\alpha, n} }
 \tilde \xi^{iso} ( \tilde{p} , \tilde{{\cal P}}_{\alpha, n} ) = S^{iso}({\cal P}_{\alpha, n} ).
\end{equation*}

\noindent
Regarding $S^{ext}( {\cal P}_{\alpha, n})$, recall from \eqref{eq:HD_def} that $p \in {\cal P}_{\alpha, n}$ is extreme if and only if
$\HD{p} \cap {\cal P}_{\alpha, n} = \{p \}$.  The image under $\Phi$ of the truncated ball $\HD{p}$
at \eqref{trunc} is
$$
D(y(\tilde{p})):= \{  (x,y) \in B( (x( \tilde{p}) , y(\tilde{p}))) : \ y  \in [0, y(\tilde{p})] \}.
$$
Note that $\xi^{ext}(p, {\cal P}_{\alpha, n}) = 1$ if and only if $D(y(\tilde{p})) \cap  \tilde{{\cal P}}_{\alpha, n} = \{\tilde{p}\}.$  For $\tilde{p} \in \tilde{{\cal P}}_{\alpha, n}$ define
$$
 \txi^{ext}(\tilde{p},  \tilde{{\cal P}}_{\alpha, n}) : = \begin{cases} 1 &  \tilde{{\cal P}}_{\alpha, n} \cap D(y( \tilde{p})) =  \{\tilde{p}\}  \\
 0   &  \tilde{{\cal P}}_{\alpha, n} \cap D(y( \tilde{p})) \neq \{\tilde{p}\}
 \end{cases}.
 $$
By definition, we thus have $\xi^{ext}(p, {\cal P}_{\alpha, n}) = \txi^{ext}(\tilde{p},  \tilde{{\cal P}}_{\alpha, n})$ and
$$
\tilde S^{ext}( \tilde{\cal P}_{\alpha, n}) =  \sum_{\tilde p \in \tilde{\cal P}_{\alpha, n} }  \tilde \xi^{ext}(\tilde p, \tilde {\cal P}_{\alpha, n}) =:  S^{ext}( {\cal P}_{\alpha, n}).
$$

From now on, when the context is clear, we write $p$ instead of $\tilde p$ for a generic point in $\tilde{{\cal P}}_{\alpha, n}$.

\begin{lemma} \label{lemm1} We have for all $\alpha \in (1/2, \infty)$
$$
\mathbb{E} [  \txi^{iso}( p, \tilde{{\cal P}}_{\alpha,n}\cup \{p\}) {\bf 1}(p \in\R{(H, R)}) ] = \exp( - \Omega(R^2)) = n^{-\Omega( \log n )},
$$
and similarly when  $\txi^{iso}$ is replaced by  $\txi^{ext}$.
\end{lemma}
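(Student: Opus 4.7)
The plan is to deduce Lemma \ref{lemm1} directly from Lemma \ref{lem:no_isolated_high_points} by invoking the score-preserving map $\Phi$ introduced in the previous subsection. The lemma is essentially a bookkeeping translation: the map $\Phi$ moves the problem from the hyperbolic disc $\mathcal{D}_R$ to the rectangle $D$, and the quantitative estimate we need has already been established on the disc side.

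First, I would observe that by the very definition of $\tilde{\xi}^{iso}$ and $\tilde{\xi}^{ext}$, for each $\tilde p = \Phi(p) \in D$ one has the pointwise identities $\tilde{\xi}^{iso}(\tilde p,\tilde{\cal P}_{\alpha,n}\cup\{\tilde p\}) = \xi^{iso}(p,{\cal P}_{\alpha,n}\cup\{p\})$ and $\tilde{\xi}^{ext}(\tilde p,\tilde{\cal P}_{\alpha,n}\cup\{\tilde p\}) = \xi^{ext}(p,{\cal P}_{\alpha,n}\cup\{p\})$, since $\Phi$ preserves the defect radius $y$ and transports the hyperbolic balls $\HBall{p}$ and the truncated balls $\HD{p}$ to $B(\tilde p)$ and $D(y(\tilde p))$ respectively. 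In particular, the restriction $\tilde p \in D((H,R))$ is exactly the restriction $y(p) \in (H,R]$.

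Next, I would simply invoke Lemma \ref{lem:no_isolated_high_points}, which, under this restriction, already delivers $\mathbb{E}[\xi^{iso}(p,{\cal P}_{\alpha,n}\cup\{p\})] = n^{-\Omega(\log n)}$ and the same bound for $\xi^{ext}$. To match the two forms of the bound stated in the conclusion, I would record that $R = 2\log(n/\nu) = \Theta(\log n)$, whence $R^{2} = \Theta((\log n)^{2})$ and $\exp(-\Omega(R^{2})) = \exp(-\Omega((\log n)^{2})) = n^{-\Omega(\log n)}$, so the two formulations agree.

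There is no substantive obstacle in this argument; the only minor point that needs care is to verify that $\Phi$ correctly transports the truncated ball $\HD{p}$ to the set $D(y(\tilde p))$ appearing in the definition of $\tilde{\xi}^{ext}$, which is precisely how $D(y(\tilde p))$ was defined in the paragraph preceding the lemma. Once this identification is in hand, both bounds are immediate from the corresponding disc-side statements, with no further analysis required.
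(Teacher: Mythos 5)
Your proposal is correct and takes essentially the same approach as the paper, which simply states ``This follows from Lemma~\ref{lem:no_isolated_high_points}.'' Your write-up spells out the bookkeeping (that $\Phi$ preserves defect radii and transports $\HBall{p}$, $\HD{p}$ to $B(\tilde p)$, $D(y(\tilde p))$, so the scores coincide; and that $R^2 = \Theta((\log n)^2)$ makes the two forms of the bound equivalent) that the paper leaves implicit.
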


\noindent{\em Proof.} This follows from Lemma \ref{lem:no_isolated_high_points}.  \qed

\vskip.5cm

We put
\begin{equation} \label{eq:tilde_S-iso_def}
\tilde{S}^{iso}_H ( \tilde{\cal P}_{\alpha,n} )  := \sum_{p  \in \tilde{\cal P}_{\alpha,n} \cap D([0,H])  }
\txi^{iso}( p , \tilde{\cal P}_{\alpha,n} )
\end{equation}
and
\begin{equation}
  \tilde{S}^{ext}_H ( \tilde{\cal P}_{\alpha,n} ) := \sum_{p  \in \tilde{\cal P}_{\alpha,n} \cap D([0,H])  }
\txi^{ext}( p , \tilde{\cal P}_{\alpha,n} ).
\end{equation}

\begin{lemma} \label{lem:variances_tilde_checked}
We have for all $\alpha \in (1/2, \infty)$
$$ \Var [S^{iso}( {\cal P}_{\alpha, n}) ] = \Var [\tilde{S}^{iso}_H (\tilde{\cal P}_{\alpha,n})]+ o(1),  \ \  \Var [S^{ext}( {\cal P}_{\alpha, n}) ] = \Var [\tilde{S}^{ext}_H (\tilde{\cal P}_{\alpha,n})] + o(1),
$$
as well as
$$
\mathbb{E} [S^{iso}( {\cal P}_{\alpha, n}) ] = \mathbb{E} [\tilde{S}^{iso}_H (\tilde{\cal P}_{\alpha,n})]+ o(1), \ \  \mathbb{E} [S^{ext}( {\cal P}_{\alpha, n}) ] = \mathbb{E} [\tilde{S}^{ext}_H (\tilde{\cal P}_{\alpha,n})]
+ o(1).
$$
\end{lemma}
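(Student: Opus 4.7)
Since $\Phi$ is a measurable bijection preserving scores, we already have the exact identities $S^{iso}({\cal P}_{\alpha,n}) = \tilde S^{iso}(\tilde{\cal P}_{\alpha,n})$ and $S^{ext}({\cal P}_{\alpha,n}) = \tilde S^{ext}(\tilde{\cal P}_{\alpha,n})$, so the plan is to show that truncating the sums to $D([0,H])$ changes both the mean and the variance by only $o(1)$. Denote by
\[
Y^{iso} := \sum_{p \in \tilde{\cal P}_{\alpha,n} \cap D((H,R])} \txi^{iso}(p,\tilde{\cal P}_{\alpha,n})
\]
the discarded ``high tail'' of $\tilde S^{iso}$, and define $Y^{ext}$ analogously. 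The whole lemma will follow once I show that $\mathbb{E}[Y^{iso}]$, $\mathbb{E}[Y^{ext}]$, $\Var[Y^{iso}]$, $\Var[Y^{ext}]$ are all $o(1)$ and that the corresponding covariance cross-terms vanish.

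For the mean I will apply the Mecke (Campbell) formula to rewrite $\mathbb{E}[Y^{iso}]$ as an integral of the one-point expected scores $\mathbb{E}[\txi^{iso}(p, \tilde{\cal P}_{\alpha,n} \cup \{p\})]$ over the high region $D((H,R])$. Lemma \ref{lemm1} bounds each integrand by $n^{-\Omega(\log n)}$ uniformly, and the total intensity mass on $D$ is $O(n)$, so $\mathbb{E}[Y^{iso}] = n^{-\Omega(\log n)} = o(1)$. The same computation handles $\mathbb{E}[Y^{ext}]$, establishing the two expectation identities.

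The variance estimate needs $\Var[Y^{iso}]$ and the cross-term $\Cov(\tilde S^{iso}_H, Y^{iso})$ to each be $o(1)$. For $\mathbb{E}[(Y^{iso})^2]$ I will apply the second-order Mecke formula. The diagonal $p=q$ piece contributes $\mathbb{E}[Y^{iso}] = o(1)$ because $\txi^{iso}$ is $\{0,1\}$-valued. For the off-diagonal $p\neq q$ piece I exploit the monotonicity
\[
\txi^{iso}(p, \tilde{\cal P}_{\alpha,n} \cup \{p,q\}) \leq \txi^{iso}(p, \tilde{\cal P}_{\alpha,n} \cup \{p\}),
\]
valid because an extra point can only destroy the isolation of $p$; this reduces the two-point integrand to a one-point quantity bounded again by Lemma \ref{lemm1}, and the resulting double integral is at most $O(n^2)\cdot n^{-\Omega(\log n)} = o(1)$. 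For the cross-term I will use Cauchy--Schwarz together with the trivial bound $\Var[\tilde S^{iso}_H] \leq \mathbb{E}[|\tilde{\cal P}_{\alpha,n}|^2] = O(n^2)$. The extreme case is handled verbatim, since $\txi^{ext}$ enjoys exactly the same monotonicity under enlarging the configuration within the cone $D(y(p))$.

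The only potentially delicate point is the Cauchy--Schwarz step, because $\Var[\tilde S^{iso}_H]$ can grow as fast as $\Theta(n^{3-2\alpha})$ for $\alpha \in (1/2,1)$, and one might worry that a cruder estimate on $\Var[Y^{iso}]$ would not suffice to kill the covariance. This worry is illusory: the super-polynomial decay $n^{-\Omega(\log n)}$ coming out of Lemma \ref{lemm1} beats any polynomial factor of $n$, so the very loose $O(n^2)$ bound on $\Var[\tilde S^{iso}_H]$ is already more than enough to force the covariance to $o(1)$, and no finer analysis is required.
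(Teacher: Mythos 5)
Your proof is correct, and it takes a genuinely different route from the paper. The paper's argument works at the level of events: it shows $\P(S^{iso}({\cal P}_{\alpha,n}) \neq \tilde{S}^{iso}_H(\tilde{\cal P}_{\alpha,n})) = O(n^{-15})$ by conditioning on $\{|\tilde{\cal P}_{\alpha,n}| \leq 2n\}$ and using Boole's inequality together with Lemma~\ref{lemm1}, then decomposes $S_n = S_n\mathbf{1}(S_n = \tS_n) + S_n\mathbf{1}(S_n \neq \tS_n)$ and controls the second summand's variance via H\"older's inequality applied to $\mathbb{E}[S_n^2\mathbf{1}(S_n\neq\tS_n)]$ with a crude third-moment bound $\mathbb{E}[|S_n|^3] = O(n^3)$. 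You instead work directly with the discarded tail $Y^{iso} = S_n - \tS_n$ and bound $\mathbb{E}[Y^{iso}]$ and $\mathbb{E}[(Y^{iso})^2]$ by the first- and second-order Mecke formulas, exploiting the downward monotonicity of $\txi^{iso}$ (and $\txi^{ext}$) under adding points to collapse the two-point integrand to a one-point expectation controlled by Lemma~\ref{lemm1}, and then closing with Cauchy--Schwarz for the cross-covariance $\Cov(\tS_n, Y^{iso})$. Both routes lean on the same Lemma~\ref{lemm1} but package it differently: the paper's event-based decomposition avoids any combinatorial Mecke manipulation, while your moment-based decomposition is arguably structurally cleaner since it targets the exact difference $Y^{iso}$ and spells out explicitly why the two-point integrals are harmless. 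Either argument is self-contained and delivers the claimed $o(1)$ error; neither is substantively shorter than the other.
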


\begin{proof} For brevity we write $S_n$ for $S^{iso}( {\cal P}_{\alpha, n})$ and $\tS_n$ for $\tilde{S}^{iso}_H (\tilde{\cal P}_{\alpha,n
})$.
We first assert that
\be \label{assert}
\P( S_n \neq \tS_n) = O(n^{-15}).
\ee
To see this, we condition on the event that $| \tilde{\cal P}_{\alpha,n} | \leq 2n$ (note that the complementary event
has probability which is generously bounded by  $O(n^{-16})$) and then use Boole's
inequality together with Lemma \ref{lemm1}.

Now write
$$
\Var S_n = \Var [  S_n {\bf 1}(S_n = \tS_n ) + S_n
{\bf 1}(S_n \neq \tS_n) ].
$$
By H\"older's inequality  and \eqref{assert}, we have  $$\Var [ S_n {\bf 1}(S_n \neq \tS_n)] \leq \mathbb{E} [ S_n^2 {\bf 1}(S_n \neq \tS_n)] $$
$$
\leq (\mathbb{E} [ |S_n|^3])^{2/3} \P (S_n \neq \tS_n)^{1/3} = O(n^{-3}).
$$

\vskip.5cm

Using the inequality $\Var[X + Y] \leq  \Var X + \Var Y + 2 \sqrt{ \Var X} \sqrt {\Var Y}$,
together with $\Var [S_n {\bf 1}(S_n = \tS_n)] = O(n^2)$, we see that $ \Var S_n =
\Var \tS_n + o(1),$ which proves the first assertion in Lemma \ref{lem:variances_tilde_checked}.
The remaining assertions are proved similarly.
\end{proof}

Define the Poisson point process $\tilde{\cal P}_{\alpha}$ on $\mathbb{R} \times \mathbb{R}^+$ with intensity measure $\mu_\alpha$ given by
\be \label{Pnavdef}
\mu_{\alpha}  (S) := \beta \int_S  e^{-\alpha y} dx dy,
\ee
where $S \subseteq \mathbb{R} \times \mathbb{R}^+$ is measurable. 
Recall from \eqref{defiD} that $D = (-I_n, I_n] \times [0, R].$ Put
\begin{equation} \label{eq:sum_score_R^2}
\tS^{iso}_H( \tilde{\cal P}_{\alpha} \cap D) := \sum_{p \in \tilde{\cal P}_{\alpha} \cap D([0,H]) } \txi^{iso}(p, \tilde{\cal P}_{\alpha} \cap D )
\end{equation}
and
\begin{equation} \label{eq:sum_score_R^2_extr}
\tS^{ext}_H ( \tilde{\cal P}_{\alpha} \cap D) :=  \sum_{p \in \tilde{\cal P}_{\alpha} \cap D([0,H])  } \txi^{ext}(p, \tilde{\cal P}_{\alpha} \cap D).
\end{equation}

The following lemma, together with Lemma \ref{lem:variances_tilde_checked},  shows that to prove Theorems \ref{prop:var_Shat} and \ref{extVar}, it is enough to establish expectation and variance asymptotics for $\tS^{iso}_H ( \tilde{\cal P}_{\alpha} \cap D )$ and
$\tS^{ext}_H( \tilde{\cal P}_{\alpha} \cap D)$.

\begin{lemma}\label{lem:exp_var_equiv}
We have  for all $\alpha \in (1/2, \infty)$
$$
\left| \mathbb{E} [ \tS^{iso}_H( \tilde{\cal P}_{\alpha,n} ) -  \tS^{iso}_H( \tilde{\cal P}_{\alpha} \cap D
)]  \right| = o(1), \ \  \left| \mathbb{E} [ \tS^{ext}_H( \tilde{\cal P}_{\alpha,n} ) -  \tS^{ext}_H( \tilde{\cal P}_{\alpha} \cap D )] \right| = o(1)
$$
and
$$
|\Var [ \tS^{iso}_H ( \tilde{\cal P}_{\alpha,n} )]  - \Var [\tS^{iso}_H ( \tilde{\cal P}_{\alpha} \cap D )] | = o(n), \ \ |\Var [\tS^{ext}_H( \tilde{\cal P}_{\alpha,n}  )]  - \Var [ \tS^{ext}_H( \tilde{\cal P}_{\alpha} \cap D )] | = o(n).
$$
\end{lemma}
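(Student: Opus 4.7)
The plan is to express the expectations and variances via the Slivnyak--Mecke formulas for Poisson functionals and to compare the resulting integrals for $\tilde{\cal P}_{\alpha,n}$ and $\tilde{\cal P}_\alpha \cap D$, using a sharpened version of Lemma~\ref{lem:dist}. Specifically, I first prove the pointwise bound $|\bar\rho_{\alpha, n}(y) - \alpha e^{-\alpha y}| \leq K\, n^{-2\alpha}\, e^{-\alpha y}$ for $y \in [0, R]$ and some constant $K = K(\alpha, \nu) \in (0, \infty)$, by expanding $\sinh(\alpha(R-y))/(\cosh(\alpha R)-1) = e^{-\alpha y}\bigl(1 + O(e^{-2\alpha(R-y)}) + O(e^{-\alpha R})\bigr)$ and using $e^{-\alpha R} = (\nu/n)^{2\alpha}$. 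Mapped via $\Phi$, this gives $d\mu_{\alpha, n} = (1 + O(n^{-2\alpha}))\,d\mu_\alpha$ pointwise on $D$, and consequently $|\mu_{\alpha, n}(A) - \mu_\alpha(A)| \leq K' n^{-2\alpha}\, \mu_\alpha(A)$ for every measurable $A \subseteq D$.

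For the expectation statements, the Slivnyak--Mecke formula gives
\[
\mathbb{E}[\tS^{iso}_H(\tilde{\cal P}_{\alpha,n})] = \int_{D([0,H])} \exp\bigl(-\mu_{\alpha,n}(B(p))\bigr)\, d\mu_{\alpha,n}(p),
\]
and the analogous identity for $\mathbb{E}[\tS^{iso}_H(\tilde{\cal P}_\alpha \cap D)]$ with $\mu_\alpha$ in place of $\mu_{\alpha,n}$. The ball-mass computations in the proof of Lemma~\ref{lem:no_isolated_high_points}, together with Lemma~\ref{lem:relAngle_generic}, give $\mu_\alpha(B(p)) = O(1)$ uniformly in $p \in D([0, H])$, hence $|\mu_{\alpha, n}(B(p)) - \mu_\alpha(B(p))| = O(n^{-2\alpha})$. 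Splitting
\[
\mathbb{E}[\tS^{iso}_H(\tilde{\cal P}_{\alpha,n})] - \mathbb{E}[\tS^{iso}_H(\tilde{\cal P}_\alpha \cap D)] = \int \bigl(e^{-\mu_{\alpha,n}(B(p))} - e^{-\mu_\alpha(B(p))}\bigr)\, d\mu_{\alpha,n}(p) + \int e^{-\mu_\alpha(B(p))}\,(d\mu_{\alpha,n} - d\mu_\alpha)(p),
\]
each piece is bounded by $O(n^{-2\alpha}) \cdot \mathbb{E}[\tS^{iso}_H(\tilde{\cal P}_{\alpha,n})] = O(n^{1 - 2\alpha}) = o(1)$. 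The $\tS^{ext}_H$ expectation is handled identically, with $D(y(p))$ from \eqref{eq:HD_def} in place of $B(p)$.

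For the variance statements, I apply the second-order Slivnyak--Mecke formula. Writing $\mathcal{Q}$ for a generic Poisson process on $D$ with intensity $\mu$, one obtains
\[
\Var[\tS^{iso}_H(\mathcal{Q})] = \int_{D([0,H])} e^{-\mu(B(p))}\,d\mu(p) + \iint_{D([0,H])^2} \bigl[\mathbf{1}(q \notin B(p))\, e^{-\mu(B(p) \cup B(q))} - e^{-\mu(B(p)) - \mu(B(q))}\bigr]\, d\mu(p)\, d\mu(q).
\]
Every factor in the integrand changes by a multiplicative $1 + O(n^{-2\alpha})$ when $\mu_\alpha$ is replaced by $\mu_{\alpha, n}$, and the trivial Poisson moment bound $\mathbb{E}[\tS^{iso}_H(\tilde{\cal P}_{\alpha,n})^2] \leq \mathbb{E}[|\tilde{\cal P}_{\alpha,n} \cap D([0,H])|^2] = O(n^2)$ bounds the variance itself by $O(n^2)$. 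Splitting as in the expectation argument, the variance discrepancy is at most $O(n^{-2\alpha}) \cdot O(n^2) = O(n^{2 - 2\alpha}) = o(n)$ for $\alpha \in (1/2, \infty)$; the extreme-point variance is handled identically, with $D(y(p))$ in place of $B(p)$.

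The main obstacle is the refinement of Lemma~\ref{lem:dist} from a uniform $O(n^{-2\alpha})$ bound to the pointwise $O(n^{-2\alpha} e^{-\alpha y})$ bound. Without this sharpening, the crude estimate $|B(p)| = O(n)$ forces an absolute intensity error of order $n^{1 - 2\alpha}$ on balls, which is not enough to close the expectation and variance discrepancies at the $o(1)$ and $o(n)$ levels respectively when $\alpha$ is close to $1/2$.
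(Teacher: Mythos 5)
Your proposal replaces the paper's coupling argument with a Slivnyak--Mecke integral comparison, and the whole thing is made to rest on the sharpened pointwise bound
$$
|\bar\rho_{\alpha,n}(y) - \alpha e^{-\alpha y}| \leq K\, n^{-2\alpha} e^{-\alpha y}, \qquad y \in [0,R].
$$
This bound is false. Writing out the density gives
$$
\bar\rho_{\alpha,n}(y) = \alpha e^{-\alpha y}\,\frac{1 - e^{-2\alpha(R-y)}}{\bigl(1-e^{-\alpha R}\bigr)^2},
$$
so the multiplicative error $\bar\rho_{\alpha,n}(y)/(\alpha e^{-\alpha y}) - 1$ is $O(n^{-2\alpha})$ only when $e^{-2\alpha(R-y)} = O(e^{-\alpha R})$, i.e.\ for $y \leq R/2 + O(1)$. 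For $y$ a constant below $R$ the ratio is $1 - e^{-2\alpha(R-y)} + O(n^{-2\alpha})$, a constant strictly less than one, and at $y=R$ the density $\bar\rho_{\alpha,n}(R)=0$ while $\alpha e^{-\alpha R} = \alpha(\nu/n)^{2\alpha}$, so the left side equals $\alpha(\nu/n)^{2\alpha}$ which is not $O(n^{-4\alpha})$. Consequently the claim ``$|\mu_{\alpha,n}(A) - \mu_\alpha(A)| \leq K' n^{-2\alpha}\mu_\alpha(A)$ for every measurable $A \subseteq D$'' is also false.

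This failure is not harmless, because $B(p)$ reaches all the way to $y = R$: for $p$ with $y(p) \in [0,H]$, $B(p)$ contains the whole slab $D([R - y(p) - C, R])$, which is precisely where the multiplicative control breaks down. On that slab both $\mu_\alpha$ and $\mu_{\alpha,n}$ carry mass of order $n^{1-2\alpha}$ (up to polylogarithmic factors) and they differ there by a constant relative amount, so the genuine order of the ball-mass discrepancy is $|\mu_{\alpha,n}(B(p)) - \mu_\alpha(B(p))| = \Theta(n^{1-2\alpha})$, not $O(n^{-2\alpha})$. Feeding this into your split of $\mathbb{E}[\tS^{iso}_H(\tilde{\cal P}_{\alpha,n})] - \mathbb{E}[\tS^{iso}_H(\tilde{\cal P}_\alpha \cap D)]$ gives a bound of order $n^{1-2\alpha}\cdot n = n^{2-2\alpha}$, which is exactly the ``crude'' obstruction you identify in your last paragraph and which diverges for $\alpha\in(1/2,1)$. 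So the integral-comparison route fails for $S^{iso}$ throughout the subcritical range $(1/2,1]$ (it would be fine for $S^{ext}$, since $D(y(p))$ stays inside $D([0,H])$ where the multiplicative bound does hold, and it is fine for $S^{iso}$ when $\alpha>1$). The paper instead couples $\tilde{\cal P}_{\alpha,n}$ with $\tilde{\cal P}_\alpha\cap D$ so that the two processes agree on a high-probability event, and controls the contribution of the discrepancy event by H\"older and Poisson tail bounds; this avoids ever needing a pointwise comparison of ball masses.
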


We will  show that $\Var [ \tS^{iso}_H( \tilde{\cal P}_{\alpha} \cap D )]$ and
$ \Var [\tS^{ext}_H( \tilde{\cal P}_{\alpha} \cap D )]$  are both $\Omega (n)$. This, together with the following corollary of Lemmas \ref{lem:variances_tilde_checked} and \ref{lem:exp_var_equiv}, implies that the leading terms of $\Var [S^{iso}({\cal P}_{\alpha, n})]$ and $\Var [S^{ext}({\cal P}_{\alpha, n})]$ are given by $\Var [\tS^{iso}_H( \tilde{\cal P}_{\alpha} \cap  D )]$ and $\Var [\tS^{ext}_H( \tilde{\cal P}_{\alpha} \cap D)]$, respectively.

\begin{corollary}  \label{cor1}  We have for all $\alpha \in (1/2, \infty)$
\begin{equation} \label{eq:var_iso_equiv}
\left| \Var [S^{iso}({\cal P}_{\alpha, n})] -  \Var [\tS^{iso}_H( \tilde{\cal P}_{\alpha} \cap  D )]  \right| = o(n)
\end{equation}
and
\begin{equation} \label{eq:var_ext_equiv}
\left| \Var [ S^{ext}({\cal P}_{\alpha, n})] -  \Var [\tS^{ext}_H( \tilde{\cal P}_{\alpha} \cap D)] \right| = o(n).
\end{equation}
\end{corollary}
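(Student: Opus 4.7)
The plan is to derive Corollary \ref{cor1} as an immediate consequence of Lemmas \ref{lem:variances_tilde_checked} and \ref{lem:exp_var_equiv} via a triangle inequality, so no new probabilistic argument is required. The route is: replace $S^{iso}({\cal P}_{\alpha,n})$ by its $H$-truncated counterpart $\tS^{iso}_H(\tilde{\cal P}_{\alpha,n})$ (this is the first step, paying an error of $o(1)$), then replace the underlying $n$-dependent process $\tilde{\cal P}_{\alpha,n}$ on $D$ by the stationary Poisson process $\tilde{\cal P}_\alpha$ intersected with $D$ (this is the second step, paying an error of $o(n)$). Since $o(1)+o(n)=o(n)$, the composite error is $o(n)$, which is exactly what the corollary claims.

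More concretely, for the isolated-point statement I would write
\[
\bigl|\Var[S^{iso}({\cal P}_{\alpha,n})] - \Var[\tS^{iso}_H(\tilde{\cal P}_\alpha\cap D)]\bigr|
\le \bigl|\Var[S^{iso}({\cal P}_{\alpha,n})] - \Var[\tS^{iso}_H(\tilde{\cal P}_{\alpha,n})]\bigr|
+ \bigl|\Var[\tS^{iso}_H(\tilde{\cal P}_{\alpha,n})] - \Var[\tS^{iso}_H(\tilde{\cal P}_\alpha\cap D)]\bigr|.
\]
The first term on the right is $o(1)=o(n)$ by the first assertion of Lemma \ref{lem:variances_tilde_checked}, and the second term is $o(n)$ by Lemma \ref{lem:exp_var_equiv}. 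Summing gives \eqref{eq:var_iso_equiv}. The argument for \eqref{eq:var_ext_equiv} is identical, using the ``ext'' versions of the two input lemmas.

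There is no real obstacle here: the two input lemmas have already done all of the work. The mild point worth flagging is that the error terms in the two lemmas live at different scales ($o(1)$ versus $o(n)$), and one must be careful only in that the weaker bound dominates, so both errors together remain $o(n)$ uniformly in $\alpha\in(1/2,\infty)$. Because the lemmas being combined are stated for all $\alpha$ in this range, the corollary inherits the same uniformity, and no additional truncation, stabilization, or mapping argument needs to be invoked.
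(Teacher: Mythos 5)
Your proof is correct and is exactly the paper's intended argument: the paper explicitly introduces Corollary \ref{cor1} as "the following corollary of Lemmas \ref{lem:variances_tilde_checked} and \ref{lem:exp_var_equiv}" and gives no further proof, relying on precisely the triangle-inequality chain you wrote down. The remark that $o(1)+o(n)=o(n)$ uniformly in $\alpha\in(1/2,\infty)$ is the only detail to observe, and you handled it.
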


\vskip.3cm

\noindent{\em Proof of Lemma \ref{lem:exp_var_equiv}}. Let $\check{X}_n$ be  $\tS^{iso}_H ( \tilde{\cal P}_{\alpha,n} )$ and let $\widehat{X}_n$ be $\tS^{iso}_H( \tilde{\cal P}_{\alpha} \cap  D )$. We denote by $F_n$ the event that
$\tilde{\cal P}_{\alpha, n}  \not =  \tilde{\cal P}_{\alpha} \cap D$. By Lemma~\ref{lem:dist}, there is a coupling of the point processes $\tilde{\cal P}_{\alpha, n}$ and
$\tilde{\cal P}_{\alpha} \cap D$ such that $\mathbb{P} (  F_n ) = O(n^{-2 \alpha }) = o(n^{-1})$, since $\alpha \in (1/2, \infty)$.
We let $A_n:= \{ |\tilde{\cal P}_{\alpha, n}| > 2n \}$ and $B_n:=\{| \tilde{\cal P}_{\alpha} \cap D | > 2n \}$.
Then $\mathbb{P}(A_n \cup B_n) = o(n^{-1})$.
Setting  $\check{Y}_n:= \check{X}_n - \mathbb{E} \check{X}_n$ and $\widehat{Y}_n := \widehat{X}_n - \mathbb{E} \widehat{X}_n$ gives
\begin{equation*}
\begin{split}
\Var \check{X}_n &= \mathbb{E} [ \check{Y}_n^2 {\bf 1}( F_n^c )]
+ \mathbb{E} [ \check{Y}_n^2 {\bf 1}(F_n)] \\
& = \mathbb{E} [ \widehat{Y}_n^2 {\bf 1}(F_n^c)]
+  \mathbb{E}  [\check{Y}_n^2 {\bf 1}(F_n) ]  \\
&\leq \mathbb{E}  \widehat{Y}_n^2 + \mathbb{E} [ \check{Y}_n^2 {\bf 1}(F_n)].
\end{split}
\end{equation*}
Now,
\begin{equation*}
\begin{split}
\mathbb{E} [ \check{Y}_n^2 {\bf 1}(F_n)]  &\leq  \mathbb{E} [ |\tilde{\cal P}_{\alpha, n}|^2 {\bf 1} (F_n)] \\
& =  \mathbb{E} [ |\tilde{\cal P}_{\alpha, n}|^2 {\bf 1} (F_n){\bf 1} ( A_n^c) ]   +
\mathbb{E} [ |\tilde{\cal P}_{\alpha, n}|^2 {\bf 1} (F_n){\bf 1} (A_n)] \\
& \leq 4n^2 \P (F_n) + \mathbb{E} [ |\tilde{\cal P}_{\alpha, n}|^2 {\bf 1} (A_n)] \\
& = o(n) + \mathbb{E} [ |\tilde{\cal P}_{\alpha, n}|^2 {\bf 1} (A_n)] = o(n),
\end{split}
\end{equation*}
since $|\tilde{\cal P}_{\alpha, n} |$ is Poisson-distributed with parameter equal to $n$.
Thus $ \Var \check{X}_n \leq \Var \widehat{X}_n  + o(n).$
The bound remains valid if we interchange $\check{X}_n$ with $\widehat{X}_n$, $\tilde{\cal P}_{\alpha, n}$ with $\tilde{\cal P}_{\alpha} \cap D$, and $A_n$ with $B_n$.
We thus obtain $|\Var\check{X}_n - \Var \widehat{X}_n | = o(n).$  The proof of the bound for
$|\mathbb{E} [\check{X}_n -  \widehat{X}_n] |$ is identical, except that
second moments are replaced by first moments and this yields $| \mathbb{E}[\check{X}_n -  \widehat{X}_n] | = o(1)$.  This completes the proof of the estimates involving
$\tS^{iso}_H ( \tilde{\cal P}_{\alpha,n} )$ and $\tS^{iso}_H( \tilde{\cal P}_{\alpha} \cap D )$.  The proofs of the assertions involving $\tS^{ext}_H( \tilde{\cal P}_{\alpha,n} )$ and $\tS^{ext}( \tilde{\cal P}_{\alpha} \cap D )$ are  identical.  \qed

\vskip.3cm

\section{Preparing for the proof of Theorem ~\ref{prop:var_Shat}}  \label{Prep}

We  provide several  lemmas needed to estimate  $\Var[\tS^{iso}_H( \tilde{\cal P}_{\alpha} \cap D )]$.

\subsection{A covariance formula for  $\txi^{iso}$}

We establish a basic covariance formula needed for the calculation of $\Var[\tS^{iso}_H( \tilde{\cal P}_{\alpha} \cap D )]$.
If $\xi (p, (\tilde{\cal P}_{\alpha} \cap D) \cup \{p\})$ is a score function, we define
the \emph{covariance of $\xi$ at the points $p_1,p_2$} as
\begin{align}
c^\xi( p_1, p_2) &:= c^\xi( p_1, p_2;  \tilde{\cal P}_{\alpha} \cap D)  \nonumber \\
& :=  \mathbb{E} \left[ \xi( p_1, (\tilde{\cal P}_{\alpha} \cap D) \cup \{p_1,p_2\} ) \cdot \xi( p_2, (\tilde{\cal P}_{\alpha} \cap D) \cup \{p_1,p_2\} ) \right] \nonumber  \\
& \hspace{1.5cm} - \mathbb{E} \left[ \xi( p_1, (\tilde{\cal P}_{\alpha} \cap D)\cup \{p_1\})\right] ~\mathbb{E} \left[\xi(p_2, (\tilde{\cal P}_{\alpha} \cap D )\cup\{p_2\})\right]. \label{Cov}
\end{align}

We will give an expression for $c^{\tilde{\xi}^{iso}}$.
The number of points of $(\tilde{\cal P}_{\alpha} \cap D)$ inside
$B(p)$ is Poisson-distributed with parameter $\mu_{\alpha}  (B(p) )$, implying that
$ \mathbb{E} [ \txi^{iso}( p, (\tilde{\cal P}_{\alpha} \cap D) \cup \{p\} )] = \exp \left( - \mu_{\alpha}  (B(p) ) \right).$  Thus
$$
  \mathbb{E} [ \txi^{iso}( p_1, (\tilde{\cal P}_{\alpha} \cap D) \cup \{p_1\} )]   \mathbb{E} [ \txi^{iso}( p_2, (\tilde{\cal P}_{\alpha} \cap D) \cup \{p_2\})]  =   \exp \left( - \left( \mu_{\alpha}  (B(p_1)) + \mu_{\alpha}  (B(p_2))\right) \right).
$$
Put
\be \label{defSp}
 \Part{p_1p_2} := B( p_1) \cap B(p_2).
 \ee
If  $p_1 \not \in B (p_2)$ and $p_2 \not \in B( p_1)$, then we have
\begin{align*}
 & \mathbb{E} [ \txi^{iso} (p_1,(\tilde{\cal P}_{\alpha} \cap D)  \cup \{p_1,p_2 \} ) \txi^{iso}(p_2,  (\tilde{\cal P}_{\alpha} \cap D)  \cup \{p_1,p_2\} ) ]  \\
& =  \exp \left( -\mu_{\alpha}  (B(p_1) \cup B(p_2)) \right) \\
&=  \exp(- \left(\mu_{\alpha}  (B(p_1))
+ \mu_{\alpha}  (B(p_2))\right) +\mu_{\alpha}  ( \Part{p_1p_2})  )\\
&=  \mathbb{E} [ \txi^{iso}( p_1, (\tilde{\cal P}_{\alpha} \cap D) \cup \{p_1\} ) ]
 \mathbb{E} [ \txi^{iso}( p_2, (\tilde{\cal P}_{\alpha} \cap D)  \cup \{p_2\}) ]
 \cdot \exp \left(\mu_{\alpha}  ( \Part{p_1p_2}) \right).
\end{align*}

Therefore, given $p_1, p_2 \in D([0,H])$, we have the {\em following basic covariance formula}:
\begin{equation} \label{eq:cov_intersection}
\begin{split}
c^{\tilde{\xi}^{iso}} (p_1,p_2) =
\begin{cases}
\mathbb{E} [ \txi^{iso}( p_1, \tilde{\cal P}_{\alpha} \cap D )]  \mathbb{E} [ \txi^{iso}( p_2,  \tilde{\cal P}_{\alpha} \cap D )]
\left(  \exp \left(\mu_{\alpha}  ( \Part{p_1p_2}) \right)  -1 \right)  & p_1 \not \in
B(p_2 ) \\
-\mathbb{E} [ \txi^{iso}( p_1,  (\tilde{\cal P}_{\alpha} \cap D) \cup \{p_1\} )] \mathbb{E} [ \txi^{iso}( p_2,  (\tilde{\cal P}_{\alpha} \cap D) \cup \{p_2\} )]  & p_1 \in B(p_2)
\end{cases}.
\end{split}
\end{equation}

Consider the second case in ~\eqref{eq:cov_intersection}, where the covariance is negative.
By Lemma~\ref{lem:relAngle_generic}(ii),  given  points $p_1$ and $p_2$ with
$y(p_1),y(p_2) \in [0, H]$, we have
$$
p_1 \in B (p_2)\ \mbox{if and only if} \ |x(p_2 )  -x(p_1)|_\Phi < (1+\lambda_n (p_1,p_2)) e^{ \frac{1}{2} (y(p_1) + y(p_2))},
$$
where  $\lambda_n (p_1,p_2)=o(1)$ uniformly for all $p_1,p_2 \in \R{[0,H]}$.
Setting
\be \label{Yonetwo}
Y_{12}:= (1+\lambda_n (p_1,p_2)) e^{ \frac{1}{2} (y(p_1) + y(p_2))}
\ee
we may re-state the above as
\begin{equation}\label{eq:condition_inc}
p_1 \in B (p_2)\ \mbox{if and only if} \ |x(p_2 )  -x(p_1)|_\Phi < Y_{12}.
\end{equation}
Before focussing on the first case in  ~\eqref{eq:cov_intersection} we need some geometric preliminaries.

\subsection{The geometry of balls with  height coordinate at most $H$}

Our aim now is to estimate $\mu_{\alpha}  ( \Part{p_1p_2})$.
The set inclusion  $B^{-}(p) \subseteq B(p) \subseteq B^{+}(p)$ at \eqref{eq:inclusion_mapped} implies
$$
\mu_{\alpha}  ( B^{-}(p_1)  \cap B^{-}(p_2) )
\leq \mu_{\alpha}  ( \Part{p_1p_2}) \leq
\mu_{\alpha}  ( B^{+}(p_1)  \cap B^{+}(p_2) ).
 $$

Given  $l \in [0, R]$, $\eps > 0$ and $p \in \R{[0,H)} $ we set
\be
B_l^-(p):= B_l^-(p, \eps):= \{ (x,y) \ : \ y \in [0, l), \ |x-x(p)|_{\Phi}  <  \OneLow \cdot e^{\frac12 (y+ y(p))} \}
\ee
\be \label{eq:ball_plus}
B_l^+ (p) := B_l^+(p, \eps):= \{ (x,y) \ : \ y \in [0, l), \  |x-x(p)|_{\Phi} < \OneUp \cdot e^{\frac12 (y+ y(p))}\}
\ee
and
\be\label{eq:ball_upper}
 Z_l(p) :=  \{ (x,y)\in \Rcal \ : \ y\geq l \}.
\ee
We continue to assume that $p_1$ and $p_2$ belong to $D([0,H])$.
We assume without loss of generality that $x (p_1) <_\Phi x (p_2)$ and $y(p_1) \in  (y(p_2),H]$.
Henceforth, for $C:=C(\eps)$ as in Lemma~\ref{lem:relAngle_generic}, we put
\be \label{defh}
h:= h(p_1) := R - y(p_1) - C.
\ee
Notice that $B^{-}(p_1) = B_h^{-}(p_1)$ and $B_h^{-}(p_2) \subseteq B^{-}(p_2)$. See Figure~\ref{fig:ball}.
\begin{figure}[h!]
\begin{center}
\scalebox{0.7}{\input{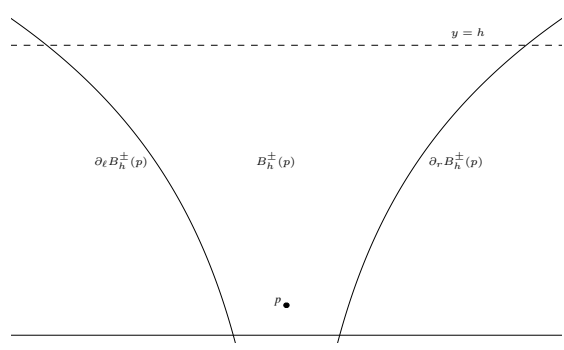}}
\caption{The ball $B_h^{\pm}(p) $ around a point $p$.  \label{fig:ball}}
\end{center}
\end{figure}

Furthermore, the definitions of $\BallUp$ and $\BallDown$ and the assumption $y(p_1 ) > y(p_2)$
imply
 $$
 B_h^{-} (p_1)\cap B_h^{-}(p_2)  \subseteq B^{-}(p_1)  \cap B^{-}(p_2)
 $$
 and
$$
B^{+}(p_1)  \cap B^{+}(p_2) \subseteq  \left(B_h^{+} (p_1)\cap B_h^{+}(p_2)  \right) \cup Z_h (p_1).
$$
\noindent
These inclusions yield
\begin{equation} \label{eq:inter_upper-bound}
  \mu_{\alpha}  ( \Part{p_1p_2}) \leq
\mu_{\alpha}  ( B_h^+ (p_1) \cap B_h^+ (p_2)) + \mu_{\alpha}  (Z_h (p_1))
\end{equation}
and
\begin{equation} \label{eq:inter_lower-bound}
\mu_{\alpha}  (B_h^{-} (p_1) \cap B_h^- (p_2)) \leq   \mu_{\alpha}  (B^{-}(p_1)  \cap B^{-}(p_2)) \leq  \mu_{\alpha}  ( \Part{p_1p_2}),
\end{equation}
whence
\be \label{Spm}
\mu_{\alpha}  (B_h^{-} (p_1) \cap B_h^- (p_2)) \leq   \mu_{\alpha}  ( \Part{p_1p_2}) \leq
\mu_{\alpha}  ( B_h^+ (p_1) \cap B_h^+ (p_2)) + \mu_{\alpha}  (Z_h (p_1)).
\ee

First, we notice that the definitions of $h, \beta,$ and $I_n$ give
\begin{equation} \label{eq:meas_Z}
\begin{split}
\mu_{\alpha}  (Z_h (p_1)) &= \beta \cdot 2I_n
\int_{R-y(p_1)-C}^R e^{-\alpha y} dy \\
&= 2\nu   e^{ \frac{R}{2}} \left( e^{-\alpha (R - y(p_1) -C)} - e^{-\alpha R}\right)
\\
&= 2\nu  e^{( \frac{1} {2}  - \alpha )R} \left(e^{\alpha y(p_1) + \alpha C} -1
\right) \\
&=\Theta (1) \cdot e^{( \frac{1} {2}  - \alpha )R + \alpha y(p_1)}.
\end{split}
\end{equation}

Denote by $B_h^{\pm} (p)$ either of the balls $B_h^+(p)$ or $B_h^- (p)$ and denote by $\OnePM$ either $\OneUp$ or
$\OneLow$, depending on which of the two cases we are considering.
The following lemma  characterises when two balls are disjoint.

\begin{lemma} \label{4.1}  Fix $\eps \in (0,1)$ and assume $x(p_1) <_\Phi x(p_2)$. With $h$ at \eqref{defh} we
have $B_h^{\pm} (p_1) \cap\Ball{p_2}{h}=\emptyset $ if and only if
 $$ |x(p_2) - x(p_1)|_{\Phi}  >  \OnePM\cdot e^{ \frac{h} {2} }  ( e^{ \frac{y(p_1)} {2}}  + e^{ \frac{y(p_2)} {2} } ). $$
\end{lemma}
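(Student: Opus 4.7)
The plan is to reduce the claim to an elementary two‐sided statement about intervals on the $x$-axis by noting that the ``radius'' of $B_h^{\pm}(p)$ in the $x$-direction at height $y$ is the monotone increasing function $1_{\pm\varepsilon}\, e^{(y+y(p))/2}$, and that on horizontal slices the balls are just (circular) intervals centered at $x(p_1),x(p_2)$. Since $|\cdot|_\Phi$ is the metric induced by the circle $\mathbb{R}/2I_n\mathbb{Z}$, the triangle inequality is at our disposal.

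\textbf{Direction $(\Leftarrow)$: large distance implies disjointness.} Assume $|x(p_2)-x(p_1)|_\Phi > 1_{\pm\varepsilon}\, e^{h/2}(e^{y(p_1)/2} + e^{y(p_2)/2})$ and suppose for contradiction that $(x,y)$ lies in $B_h^{\pm}(p_1)\cap B_h^{\pm}(p_2)$. By definition $y\in[0,h)$ and
\[
|x-x(p_i)|_\Phi < 1_{\pm\varepsilon}\, e^{(y+y(p_i))/2},\qquad i=1,2.
\]
The triangle inequality for $|\cdot|_\Phi$ and monotonicity of $y\mapsto e^{y/2}$ yield
\[
|x(p_2)-x(p_1)|_\Phi \le |x-x(p_1)|_\Phi + |x-x(p_2)|_\Phi < 1_{\pm\varepsilon}\, e^{y/2}\bigl(e^{y(p_1)/2}+e^{y(p_2)/2}\bigr) \le 1_{\pm\varepsilon}\, e^{h/2}\bigl(e^{y(p_1)/2}+e^{y(p_2)/2}\bigr),
\]
contradicting our hypothesis.

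\textbf{Direction $(\Rightarrow)$: small distance implies a common point.} Conversely, assume $d := |x(p_2)-x(p_1)|_\Phi \le 1_{\pm\varepsilon}\, e^{h/2}(e^{y(p_1)/2}+e^{y(p_2)/2})$; we exhibit $(x,y)\in B_h^{\pm}(p_1)\cap B_h^{\pm}(p_2)$. Since $d < 2I_n$, we may work on the shorter arc joining $x(p_1)$ and $x(p_2)$ in $\mathbb{R}/2I_n\mathbb{Z}$ and parametrise a candidate $x$ on that arc by its (non-circular) distance $a\in[0,d]$ from $x(p_1)$, so that $|x-x(p_1)|_\Phi=a$ and $|x-x(p_2)|_\Phi=d-a$. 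For any $y\in[0,h)$, a valid $a$ exists iff
\[
d - 1_{\pm\varepsilon}\, e^{(y+y(p_2))/2} \;<\; a \;<\; 1_{\pm\varepsilon}\, e^{(y+y(p_1))/2},
\]
which is non-empty exactly when $d < 1_{\pm\varepsilon}\, e^{y/2}(e^{y(p_1)/2}+e^{y(p_2)/2})$. Since the right-hand side is continuous and increasing in $y$ and tends to $1_{\pm\varepsilon}\, e^{h/2}(e^{y(p_1)/2}+e^{y(p_2)/2})$ as $y\uparrow h$, choosing $y$ sufficiently close to $h$ produces a non-empty interval (using the hypothesis with strict inequality; the boundary case $d$ equal to the RHS is a measure-zero configuration and can be read as part of the ``$>$'' alternative). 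This yields a valid pair $(x,y)\in B_h^{\pm}(p_1)\cap B_h^{\pm}(p_2)$.

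The main (mild) obstacle is bookkeeping for the circular distance $|\cdot|_\Phi$: one must verify that for $p_1,p_2\in D([0,H])$ the relevant ``sum of radii'' $1_{\pm\varepsilon}\, e^{h/2}(e^{y(p_1)/2}+e^{y(p_2)/2})$ is still $\le 2I_n$, so that the shorter arc between $x(p_1)$ and $x(p_2)$ is the natural domain on which to parametrise $x$ and the triangle inequality on $\mathbb{R}/2I_n\mathbb{Z}$ is saturated along that arc. Since $y(p_i)\le H=4\log R$ and $h\le R$, the sum of radii is $O(e^{(R+H)/2}) = O(I_n\, R^2 / n)$, which is much smaller than $2I_n=\pi n/\nu$ for $n$ large, so the shorter arc is indeed well-defined and the construction above is legitimate.
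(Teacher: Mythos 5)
Your approach is correct in structure and is genuinely different from the paper's: the paper compares the right-most point of $B_h^{\pm}(p_1)$ with the left-most point of $B_h^{\pm}(p_2)$ (both near the top height $h$) and decides disjointness by comparing their $x$-coordinates, whereas you use the triangle inequality for $|\cdot|_\Phi$ to get disjointness and a parametric construction along the shorter arc to produce a common point. Your version is arguably cleaner and makes the monotonicity in $y$ explicit rather than implicitly slicing at $y=h$. Your observation about the boundary case $d = \OnePM e^{h/2}(e^{y(p_1)/2}+e^{y(p_2)/2})$ is also fair: since the balls are defined with strict inequalities and $y < h$, the intersection is in fact empty there, so the lemma's ``iff'' is off by a measure-zero set, and the paper's own proof has the identical feature.

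However, the final sanity-check paragraph contains an algebraic error. You bound each radius by $\OnePM e^{(R+H)/2}$ using only $h \le R$ and $y(p_i) \le H$. But $e^{(R+H)/2} = e^{R/2} R^2 = \Theta(nR^2)$, which is $R^2$ times \emph{larger} than $2I_n = \pi e^{R/2} = \Theta(n)$, not smaller; the displayed identification $O(e^{(R+H)/2}) = O(I_n R^2/n)$ is off by a factor of $\Theta(n)$. The verification you want does hold, but one must use the actual height cut-off $h = R - y(p_1) - C$ from \eqref{defh} together with the standing assumption $y(p_2) \le y(p_1)$. For $y < h$ this gives $y + y(p_i) < R - C$ for $i = 1,2$, so each slice-radius is at most $\OnePM e^{(R - C)/2}$, which (for the $\eps,C$ in play) is strictly less than $I_n = \frac{\pi}{2} e^{R/2}$, e.g.\ because $\OnePM e^{-C/2} \le \frac{4}{3} < \frac{\pi}{2}$. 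With this corrected bound, the shorter-arc parametrisation is legitimate exactly as you intended, and the rest of your argument stands.
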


\begin{proof}
By the definition of $B_h^{\pm} (p_1)$, the right-most point of $B_h^{\pm} (p_1)$, denoted by $p' := (x(p'), y(p'))$ satisfies
$x(p') =_\Phi x(p_1) + \OnePM \cdot e^{\frac12 (y(p_1) + h)}$.
Similarly, the left-most point of $\Ball{p_2}{h}$ (denoted by $p''$)
satisfies $x(p'') =_\Phi x(p_2) -\OnePM \cdot  e^{\frac12 (y(p_2) + h)}$.
Note that $y(p') = y(p'')$.
Then $x(p') = x(p'')$ if and only if $x(p_2) - x(p_1) =_\Phi \OnePM \cdot  e^{h/2} ( e^{y(p_1)/2} + e^{y(p_2)/2})$.
If $|x(p_2) - x(p_1)|_\Phi > \OnePM \cdot  e^{h/2} ( e^{y(p_1)/2} + e^{y(p_2)/2})$, then $B_h^{\pm}(p_1) \cap B_h^{\pm} (p_2)=\emptyset $.
Likewise,  if $|x(p_2) - x(p_1)|_\Phi \leq \OnePM \cdot  e^{h/2} ( e^{y(p_1)/2} + e^{y(p_2)/2})$, then $B_h^{\pm}(p_1) \cap B_h^{\pm} (p_2)\not = \emptyset$.
\end{proof}


We still assume $x(p_1) <_\Phi x(p_2)$ and we set $t :=_\Phi x (p_2) - x(p_1)$. With $h$ at \eqref{defh} we consider in the remainder of this sub-section the case $0 <_\Phi t <_\Phi  \OnePM \cdot e^{h/2}( e^{y(p_1)/2} + e^{y(p_2)/2})$.
For $t$ in this domain, Lemma \ref{4.1} implies that $\Ball{p_1}{h}\cap \Ball{p_2}{h} \not =\emptyset$.  Given $p \in \R{(0,H)}$, denote the left and right boundaries of $\Ball{p}{h}$ by
$$
\partial_{\ell}  \Ball{p}{h} := \{ p' \in \R{(0,h)}  : \ x (p') =_\Phi x (p) - \OnePM \cdot e^{\frac{1}{2}(y(p) + y(p'))}\},
$$
and
$$
\partial_{r} \Ball{p}{h}:=  \{  p' \in \R{(0,h)} : \
x(p') =_\Phi x(p) + \OnePM \cdot e^{\frac{1}{2}(y(p) + y(p'))}\};
$$
cf. Figure~\ref{fig:ball}.   The first part of the next lemma shows that
$\partial_{r} \Ball{p_1}{h}$ and $\partial_{\ell} \Ball{p_2}{h}$ intersect whenever the $x$-coordinates of $p_1$ and $p_2$ are
far enough apart with respect to the exponentiated height coordinates.

\begin{lemma} \label{geometrylemma}
(i) If $\partial_{r} \Ball{p_1}{h} \cap \partial_{\ell} \Ball{p_2}{h} \neq \emptyset$ then
\be \label{notempty}
x(p_2) - x(p_1) >_\Phi  \OnePM \cdot \left( e^{  \frac{y(p_1)} {2}}  + e^{ \frac{y(p_2)} {2}}  \right).
\ee
\noindent (ii) $\partial_{\ell} \Ball{p_1}{h} \cap \partial_{\ell} \Ball{p_2}{h} = \emptyset$.
\vskip.1cm
\noindent (iii) If $p_{12}' \in \partial_{r} \Ball{p_1}{h} \cap \partial_{r} \Ball{p_2}{h} \neq \emptyset$ then
\begin{equation} \label{eq:intersection_point_upper}
e^{  \frac{ y(p_{12}') } {2}}  = \frac{1}{\OnePM} \cdot \frac{|x(p_2) - x(p_1)|_\Phi}{e^{y(p_1)/2} -e^{y(p_2)/2}}.
\end{equation}
\end{lemma}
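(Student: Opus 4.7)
My plan is to prove all three parts by directly solving the systems of equations that implicitly define the left and right boundaries. Recall that $p' \in \partial_{r} \Ball{p}{h}$ iff $y(p') \in (0,h)$ and $x(p') =_\Phi x(p) + \OnePM \cdot e^{\frac{1}{2}(y(p)+y(p'))}$, and analogously for $\partial_{\ell} \Ball{p}{h}$ with a minus sign. Each assertion is then obtained by equating two such defining relations and subtracting.

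For part (i), suppose $p' \in \partial_{r} \Ball{p_1}{h} \cap \partial_{\ell} \Ball{p_2}{h}$. Setting the right-boundary equation of $\Ball{p_1}{h}$ equal to the left-boundary equation of $\Ball{p_2}{h}$ and rearranging gives
\[
x(p_2) - x(p_1) =_\Phi \OnePM \cdot e^{y(p')/2} \bigl( e^{y(p_1)/2} + e^{y(p_2)/2} \bigr).
\]
Since $y(p') \in (0,h)$ we have $e^{y(p')/2} > 1$ strictly, which yields \eqref{notempty}.

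For part (ii), any candidate $p'$ on both left boundaries would satisfy $x(p') =_\Phi x(p_i) - \OnePM \cdot e^{\frac12(y(p_i)+y(p'))}$ for $i=1,2$; subtracting produces
\[
x(p_2) - x(p_1) =_\Phi \OnePM \cdot e^{y(p')/2} \bigl( e^{y(p_2)/2} - e^{y(p_1)/2} \bigr).
\]
The standing assumption $y(p_1) > y(p_2)$ makes the right-hand side strictly negative, contradicting $x(p_1) <_\Phi x(p_2)$, so the intersection is empty. For part (iii), the analogous subtraction applied to two right-boundary equations gives
\[
x(p_2) - x(p_1) =_\Phi \OnePM \cdot e^{y(p'_{12})/2} \bigl( e^{y(p_1)/2} - e^{y(p_2)/2} \bigr),
\]
and since $y(p_1) > y(p_2)$ the factor $e^{y(p_1)/2} - e^{y(p_2)/2}$ is positive, so one solves for $e^{y(p'_{12})/2}$ to obtain \eqref{eq:intersection_point_upper}.

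The only genuine subtlety is that $=_\Phi$ denotes arithmetic modulo $2I_n$, so a priori the equalities above hold only up to integer multiples of $2I_n$ and one must select the correct lift to $\mathbb{R}$. Under the standing hypothesis $x(p_1) <_\Phi x(p_2)$ and nonemptiness of the intersection in (i) or (iii), the geometric intersection determines a unique nearest lift in which the relevant $x$-gap is bounded by the sum (resp.\ difference) of the ball widths; passing to this lift turns all three modular identities into ordinary real equalities and the three computations go through. This cyclic bookkeeping is the only mild obstacle; the underlying algebra is elementary.
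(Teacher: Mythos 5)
Your proof is correct and follows essentially the same route as the paper's: in each part you equate the two boundary relations, subtract, solve for $e^{y(p')/2}$, and then use positivity of the height (parts (i), (iii)) or the sign constraint from $y(p_1)>y(p_2)$ (part (ii)). The extra remark about selecting the correct lift modulo $2I_n$ is a fair point of care that the paper leaves implicit, but it does not alter the argument.
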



\noindent{\em Proof.} (i) If $p_{12} := \partial_{r} \Ball{p_1}{h} \cap \partial_{\ell} \Ball{p_2}{h} \neq \emptyset$ (cf. Figure~\ref{fig:Balls-intersection}), then
$y(p_{12})$ satisfies the following equations:
\begin{eqnarray*}
x(p_{12}) - x(p_1) &=_\Phi &\OnePM \cdot e^{ \frac12 (y(p_1) + y(p_{12}) )}   \\
\text{and} & & \\
x(p_2) - x(p_{12})  &=_\Phi&\OnePM \cdot  e^{ \frac12 (y(p_2) + y(p_{12}))}.
\end{eqnarray*}
Therefore
\begin{equation} \label{eq:intersection_point_low}
e^{ \frac{y(p_{12} ) } {2} } = \frac{1}{\OnePM} \cdot \frac
{|x (p_2) - x(p_1)|_\Phi}{ e^{y(p_1)/2} + e^{y(p_2)/2}}.
\end{equation}
Hence, $p_{12}$ exists provided that $y(p_{12}) >0$, which implies \eqref{notempty}, as desired.

\vskip.3cm

\noindent (ii) On the contrary,
assume that there exists $p \in \partial_{\ell} \Ball{p_1}{h} \cap \partial_{\ell} \Ball{p_2}{h}$. Using the definition
of the left boundary we have
$$ x(p_1) -\OnePM \cdot e^{\frac{1}{2}(y(p_1) + y(p))}=_\Phi x(p_2) -\OnePM \cdot e^{\frac{1}{2}(y(p_2) + y(p))}.$$
We deduce that
$$ e^{y(p)/2} = \frac{1}{\OnePM} \cdot \frac{|x(p_2) - x(p_1)|_\Phi}{e^{y(p_2)/2} -e^{y(p_1)/2}} < 0, $$
since $y(p_2) < y(p_1)$, which is impossible. Thus, such $p$ cannot exist and $\partial_{\ell} \Ball{p_1}{h} \cap \partial_{\ell}
\Ball{p_2}{h} = \emptyset$, proving (ii) as desired.

\vskip.3cm

\noindent (iii) Assume   $p_{12}' = \partial_{r} \Ball{p_1}{h} \cap \partial_{r} \Ball{p_2}{h} \neq \emptyset$  (cf. Figure~\ref{fig:Balls-intersection}). Then $p_{12}'$  satisfies
$$ x(p_1) + \OnePM \cdot e^{\frac{1}{2}(y(p_1) + y(p_{12}' ))}=_\Phi x(p_2) +\OnePM \cdot
 e^{\frac{1}{2}(y(p_2) + y(p_{12}' ))},$$
which yields \eqref{eq:intersection_point_upper} and completes the proof of Lemma \ref{geometrylemma}.  \qed

\vskip.3cm

Note that \eqref{eq:intersection_point_upper} and \eqref{eq:intersection_point_low} imply that $y(p_{12}) < y(p_{12}')$. For convenience, we will set
$$
y(p_{12}):= y_L  \ \ \ \text{and} \ \ \ y(p_{12}'):= y_U.
$$

\begin{figure}
\begin{center}
\includegraphics[scale=0.35]{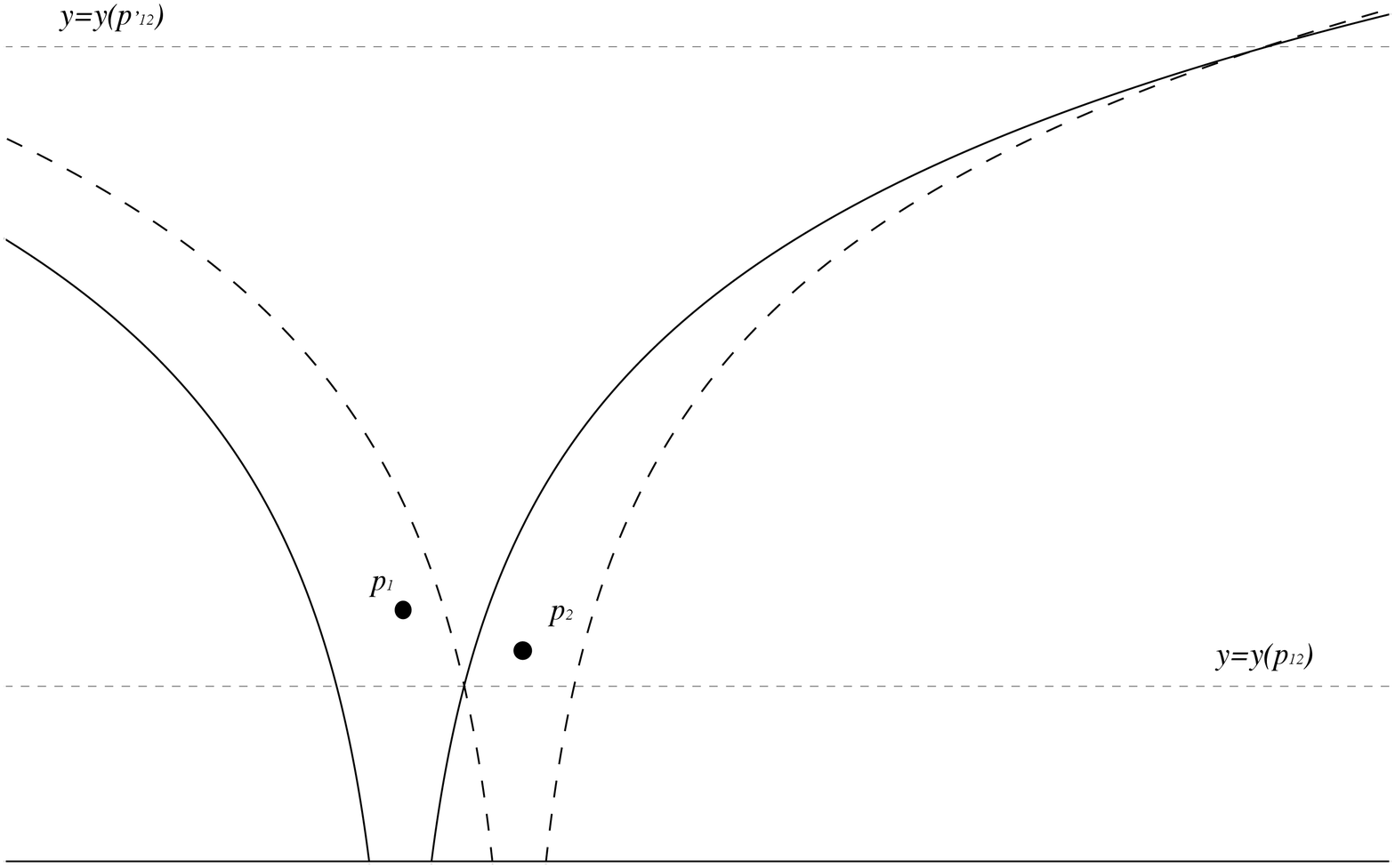}
\caption{The intersection of two balls  \label{fig:Balls-intersection}}
\end{center}
\end{figure}

Consider now the union of the two balls $\Ball{p_1}{h}\cup \Ball{p_2}{h}$.
For any $x >_\Phi x(p_{12}')$ let $p \in \partial_{r} \Ball{p_1}{h}$ and $p' \in \partial_{r} \Ball{p_2}{h}$ be such that
$x(p) = x(p') = x$. Then $y(p') >  y(p)$.
Now for $x<_\Phi x(p_1) - \OnePM \cdot e^{y(p_1)/2}$, consider two points $p \in \partial_{\ell} \Ball{p_1}{h}$ and $p' \in \partial_{\ell} \Ball{p_2}{h}$ such that $x(p) = x(p') = x$. Since $\partial_{\ell} \Ball{p_1}{h} \cap \partial_{\ell} \Ball{p_2}{h} = \emptyset$, it follows that
$y(p) < y(p')$. In other words, the curves $\partial_{\ell} \Ball{p_1}{h}$ and $\partial_{\ell} \Ball{p_2}{h}$ do not intersect and
$\partial_{\ell} \Ball{p_2}{h}$ stays ``above'' $\partial_{\ell} \Ball{p_1}{h}$.

\subsection{ The $\mu_{\alpha} $-content of $\Ball{p_1}{h} \cap \Ball{p_2}{h}$}

We now focus on $\mu_{\alpha}  ( B_h^+ (p_1) \cap B_h^+ (p_2))$ and $\mu_{\alpha}  ( B_h^- (p_1) \cap
B_h^- (p_2))$.  The calculations of the $\mu_{\alpha} $-measure of these two intersections are similar, as the considered sets differ only by
constant factors $\OneUp$ and $\OneLow$.  We provide a generic calculation covering both cases.
The inequality  ~\eqref{Spm} shows
that the $\mu_{\alpha}$-content of
$$
\InterSec{p_1p_2}:= \Ball{p_1}{h} \cap \Ball{p_2}{h}
$$
controls the growth of $\Cov(p_1,p_2)$. The following lemma gives quantitative bounds on $\mu_{\alpha} (\InterSec{p_1p_2})$. We will use the first part of the lemma to lower bound  $\Var [\tS^{iso}_H( \tilde{\cal P}_{\alpha} \cap D )]$. It turns out that this gives the main contribution to the variance bound of Theorem~\ref{prop:var_Shat}. We will give a matching upper bound on the variance through  the
Poincar\'e inequality. The second part of the lemma gives an upper bound on the intensity measure of $\InterSec{p_1p_2}$, which will be used in the proof of the central limit theorem for  $\tS^{iso}_H( \tilde{\cal P}_{\alpha} \cap D ), \alpha \in (1, \infty)$.  Recall from \eqref{defh} that
we have set $h:= R - y(p_1) - C.$ Also, recall that we set $\gamma := 4 \beta/(2 \alpha - 1)$.

\begin{lemma} \label{lem:mu_intersection}  Let $p_i := (x(p_i), y(p_i))$, $i = 1,2$ as above.
 For $i=1,2$  we put $\Y_i := e^{y(p_i)/2}$, we set $\eta (Y_2):= \gamma \Y_2 e^{(1/2 - \alpha)h}$, and we suppose that  $t := x(p_2) - x(p_1) >_\Phi 0$.
\begin{enumerate}
\item[ (i)]
If  $ \max \{ \OnePM \cdot \left( \Y_1 + \Y_2 \right), Y_{12} \}  < t \leq \OnePM \cdot e^{ \frac{h} {2} }(\Y_1 - \Y_2 )$,
then
\be \label{caseii}
 \mu_{\alpha}  ( \InterSec{p_1p_2})  = \kappa t^{1-2\alpha} - \OnePM \eta (Y_2),
  \ee
where
\begin{equation} \label{kap}
\kappa := \OnePM \cdot \frac{\gamma}{4\alpha} \left(  \left( \Y_1 + \Y_2 \right)^{2\alpha}
- \left( \Y_1 - \Y_2\right)^{2\alpha} \right). \end{equation}
\item[ (ii)]
If $e^{R/4} (Y_1 + Y_2) <_\Phi t <_\Phi I_n$, then
\begin{equation} \label{eq:far_apart}
\begin{split}
\mu_{\alpha}  (\InterSec{p_1p_2} )
&= O(1) \cdot \left(t^{1-2\alpha} \cdot (Y_1 + Y_2)^{2\alpha} + n^{1-2\alpha}\cdot  Y_1^{2\alpha}\right).
\end{split}
\end{equation}
\end{enumerate}
\end{lemma}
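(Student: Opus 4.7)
The plan is to compute $\mu_\alpha(\InterSec{p_1 p_2})$ by slicing horizontally and integrating the density $\beta e^{-\alpha y}$ against the width of each horizontal cross-section. Without loss of generality $x(p_1) <_\Phi x(p_2)$ and $y(p_1) \geq y(p_2)$. From Lemma \ref{geometrylemma}(ii) and equations \eqref{eq:intersection_point_low}, \eqref{eq:intersection_point_upper}, we have $\partial_{\ell}\Ball{p_1}{h} \cap \partial_{\ell}\Ball{p_2}{h} = \emptyset$, and the crossing heights $y_L := y(p_{12})$, $y_U := y(p_{12}')$ satisfy $e^{y_L/2} = t/[\OnePM(\Y_1 + \Y_2)]$ and $e^{y_U/2} = t/[\OnePM(\Y_1 - \Y_2)]$. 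This identifies the shape of $\InterSec{p_1 p_2}$: its slice at height $y$ is empty for $y < y_L$; it is a lens segment of width $w_L(y) := \OnePM(\Y_1 + \Y_2)e^{y/2} - t$ on $[y_L, y_U \wedge h]$, bounded on the left by $\partial_{\ell}\Ball{p_2}{h}$ and on the right by $\partial_{r}\Ball{p_1}{h}$; and when $y_U \leq h$, it is a slice of width $w_U(y) := 2\OnePM \Y_2 e^{y/2}$ on $[y_U, h]$, where $\Ball{p_2}{h}$ sits entirely inside $\Ball{p_1}{h}$.

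For part (i), the hypothesis $t \leq \OnePM e^{h/2}(\Y_1 - \Y_2)$ ensures $y_U \leq h$, so
\begin{equation*}
\mu_\alpha(\InterSec{p_1 p_2}) = \beta \int_{y_L}^{y_U} w_L(y) e^{-\alpha y}\, dy + \beta \int_{y_U}^{h} w_U(y) e^{-\alpha y}\, dy.
\end{equation*}
Each integrand is a linear combination of $e^{-\alpha y}$ and $e^{(1/2 - \alpha)y}$ with elementary antiderivatives. Every endpoint substitution at $y_L$ or $y_U$ reduces, via the displayed formulas, to a multiple of $t^{1-2\alpha}$, while the unique endpoint at $y = h$ produces an $e^{(1/2-\alpha)h}$-term equal to $-\OnePM \eta(\Y_2)$. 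Using $\gamma = 4\beta/(2\alpha - 1)$ together with the identity $\gamma/2 - \beta/\alpha = \gamma/(4\alpha)$, all the resulting $t^{1-2\alpha}$-coefficients collapse into $\frac{\gamma}{4\alpha}\bigl[(\Y_1 + \Y_2)^{2\alpha} - (\Y_1 - \Y_2)^{2\alpha}\bigr] = \kappa$, after absorbing a spurious $\OnePM^{2\alpha - 1}$ factor into the $1 \pm \eps$ slack of $\OnePM$. This algebraic cancellation, together with the observation that the contained-ball slab $[y_U, h]$ is the sole source of the $\eta$-correction, is the main technical obstacle.

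For part (ii), the hypothesis $t > e^{R/4}(\Y_1 + \Y_2)$ forces $y_L$ to be of order at least $R/2$, and we use the cruder bounds $w_L(y) \leq \OnePM(\Y_1 + \Y_2) e^{y/2}$ (dropping $-t$) and $w_U(y) = 2\OnePM \Y_2 e^{y/2}$. Integrating the first against $\beta e^{-\alpha y}$ upwards from $y_L$ yields, using $\alpha > 1/2$, a quantity of order $(\Y_1 + \Y_2) e^{(1/2 - \alpha)y_L} = (\Y_1 + \Y_2)^{2\alpha} t^{1-2\alpha}$, which is the first term in the stated bound. When $y_U \leq h$, the contained-ball integral contributes $O(\Y_2 e^{(1/2 - \alpha)h})$; substituting $h = R - y(p_1) - C$ and using $e^{(1/2 - \alpha)R} = \Theta(n^{1-2\alpha})$ together with $\Y_2 \leq \Y_1$ converts this to $O(n^{1-2\alpha}\Y_1^{2\alpha})$. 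When $y_U > h$ the contained-ball region is absent and the bound holds trivially. Summing gives the claimed estimate.
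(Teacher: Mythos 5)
Your slicing decomposition — lens slab $[y_L,\, y_U\wedge h]$ of width $w_L(y)=\OnePM(\Y_1+\Y_2)e^{y/2}-t$ and contained-ball slab $[y_U,h]$ of width $w_U(y)=2\OnePM\Y_2 e^{y/2}$ — is exactly the paper's, and your sketch of part~(i) (all $y_L$- and $y_U$-endpoint terms becoming multiples of $t^{1-2\alpha}$ via \eqref{eq:powers_alpha}--\eqref{eq:powers_total}, the sole $h$-endpoint giving $-\OnePM\eta(Y_2)$, and the cancellation $\gamma/2-\beta/\alpha=\gamma/(4\alpha)$) correctly reproduces \eqref{eq:with_powers_in}. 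Your remark about the $\OnePM^{2\alpha-1}$ slack is a fair reading of what the paper itself leaves implicit.

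Part~(ii) contains a genuine error. You claim that the contained-ball integral $\beta\int_{y_U}^{h} 2\OnePM\Y_2 e^{y/2}e^{-\alpha y}\,dy$ contributes $O(\Y_2 e^{(1/2-\alpha)h})$. Since $\alpha>1/2$, the integrand $e^{(1/2-\alpha)y}$ is \emph{decreasing}, so the integral is governed by its \emph{lower} endpoint: it equals $\tfrac{2\OnePM\Y_2\beta}{\alpha-1/2}\bigl(e^{(1/2-\alpha)y_U}-e^{(1/2-\alpha)h}\bigr)=O(\Y_2 e^{(1/2-\alpha)y_U})$, which is larger than what you wrote. Consequently the computation ``$\Y_2 e^{(1/2-\alpha)h}=O(n^{1-2\alpha}\Y_1^{2\alpha})$'' does not describe the contained-ball term; that term is in fact $O\bigl(\Y_2(\Y_1-\Y_2)^{2\alpha-1}t^{1-2\alpha}\bigr)\le O(\Y_1^{2\alpha}t^{1-2\alpha})$, i.e.\ it is already dominated by the first term of the stated bound, so your final inequality survives by accident rather than for the reason you give. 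In the paper the $n^{1-2\alpha}\Y_1^{2\alpha}$ term does not come from the contained-ball slab at all: it comes (via \eqref{eq:meas_Z} and \eqref{Spm}) from $\mu_\alpha(Z_h(p_1))$, needed to control $\mu_\alpha(\Part{p_1p_2})$ when the truncated balls $B_h^{\pm}$ are disjoint, i.e.\ $t>\OnePM e^{h/2}(\Y_1+\Y_2)$, in which case $\InterSec{p_1p_2}$ is empty. The paper also has an intermediate regime $\OnePM e^{h/2}(\Y_1-\Y_2)<t\le\OnePM e^{h/2}(\Y_1+\Y_2)$ in which $y_U>h$ but the lens is nonempty, and handles it via the modified identity \eqref{eq:mu_intersection_Case_3} producing the extra term $\tfrac{\beta}{\alpha}t e^{-\alpha h}$; your sentence ``When $y_U>h$ the contained-ball region is absent and the bound holds trivially'' waves at this regime without the needed estimate, though again your crude bound $w_L(y)\le\OnePM(\Y_1+\Y_2)e^{y/2}$ on $[y_L,h]$ would in fact close it.
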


\begin{proof}  Part (i). We express $\InterSec{p_1p_2}$ as the disjoint union of  the sets $\R{(y_U,R)} \cap \InterSec{p_1p_2}$
and $\R{[y_L,y_U]} \cap \InterSec{p_1p_2}$.
The above analysis implies that $\R{(y_U,R)} \cap \InterSec{p_1p_2} = \R{(y_U,R]} \cap \Ball{p_2}{h}$.
Let us consider the region $\R{[y_L,y_U]}$.
Let $y \in [y_L, y_U]$. Then any point $p$ with $y(p) =y$  belongs to $\InterSec{p_1p_2}$ if and only if
$ x(p_2) - \OnePM \cdot e^{\frac12 (y(p_2) + y)} \leq_\Phi x(p) \leq_\Phi x(p_1) + \OnePM \cdot e^{\frac12 (y(p_1) + y)}$.
Note that $\R{(0,y_{L})} \cap \InterSec{p_1p_2} = \emptyset$.

Recalling $\gamma = 4 \beta/(2 \alpha - 1)$, these observations imply
\begin{equation} \label{eq:mu_intersection_U}
\begin{split}
& \mu_{\alpha} (\R{(y_U,R)} \cap \InterSec{p_1p_2} ) \\
& = \OnePM \cdot 2\beta e^{ \frac{y(p_2)} {2}}
\int_{y_U}^{h} e^{( \frac{1}{2}  -\alpha) y} dy \\
&=\OnePM \cdot \gamma \cdot   \left( e^{ \frac{ y(p_2)} {2} +( \frac{1} {2}  - \alpha) y_U}
-e^{ \frac{y(p_2)} {2} }e^{(  \frac{1} {2}  - \alpha)h} \right)  \\
&= \OnePM \cdot \gamma e^{ \frac{y(p_2)} {2} +( \frac{1} {2}  - \alpha) y_U}  -\OnePM \cdot\gamma e^{ \frac{y(p_2)} {2} }e^{( \frac{1} {2} - \alpha)h}.
\end{split}
\end{equation}
We also have
\begin{equation} \label{eq:mu_intersection_UL}
\begin{split}
\lefteqn{\mu_{\alpha}  ( \R{[y_L,y_U]} \cap \InterSec{p_1p_2} ) } \\
& =\beta \int_{y_L}^{y_U} \left( (x(p_1) - x(p_2)) + \OnePM \cdot e^{ \frac{y} {2}}  \left( e^{ \frac{y(p_1)} {2}}  + e^{ \frac{y(p_2)} {2}}  \right) \right) e^{-\alpha y} dy \\
&=\frac{\beta}{\alpha}(x(p_1) - x(p_2)) \left( e^{-\alpha y_L} - e^{-\alpha y_U} \right)
\\
&\hspace{0.5cm} +\OnePM \cdot \frac{\gamma}{2} \left( e^{  \frac{y(p_1)} {2}}  + e^{ \frac{ y(p_2)} {2}}  \right)  \left( e^{( \frac{1} {2}  - \alpha) y_L} - e^{( \frac{1} {2}  - \alpha) y_U} \right).
\end{split}
\end{equation}
Hence, by \eqref{eq:mu_intersection_U}  and \eqref{eq:mu_intersection_UL} we have
\begin{equation} \label{eq:mu_intersection}
\begin{split}
\mu_{\alpha}  ( \InterSec{p_1p_2}) & = \mu_{\alpha}  (\R{(y_U,R]} \cap \InterSec{p_1p_2} )  +
\mu_{\alpha}  ( \R{[y_L,y_U]} \cap \InterSec{p_1p_2} ) \\
& = \frac{\beta}{\alpha}(x(p_1) - x(p_2)) \left( e^{-\alpha y_L} - e^{-\alpha y_U} \right)   \\
&\hspace{1cm} + \OnePM \cdot \frac{ \gamma} {2} e^{ \frac{y(p_1)} {2}}  \left( e^{( \frac{1} {2} - \alpha) y_L} - e^{(\frac{1} {2} - \alpha) y_U} \right)\\
&\hspace{1cm} + \OnePM \cdot \frac{ \gamma}{2} e^{ \frac{y(p_2)} {2}}   \left( e^{( \frac{1} {2} - \alpha) y_L} + e^{(\frac{1} {2} - \alpha) y_U} \right) - \OnePM \cdot \gamma e^{ \frac{y(p_2)} {2}} e^{(\frac{1} {2} - \alpha)h}.
\end{split}
\end{equation}
By \eqref{eq:intersection_point_upper} and \eqref{eq:intersection_point_low} we have
\begin{equation}   \label{eq:powers_expressed}
e^{  \frac{y_L} {2}} = \frac{1}{\OnePM} \cdot\frac{t}{\Y_1 + \Y_2} \ \ \ \  \ \text{and} \ \ \ \ \
e^{ \frac{y_U} {2}} = \frac{1}{\OnePM} \cdot\frac{t}{\Y_1 - \Y_2}.
\end{equation}
Therefore,
\begin{equation}  \label{eq:powers_alpha}
e^{-\alpha y_L} =  ( \OnePM)^{2\alpha}    \cdot \left( \frac{t}{\Y_1 + \Y_2} \right)^{-2\alpha} \ \text{and} \ e^{-\alpha y_U} =
( \OnePM)^{2\alpha} \cdot
\left( \frac{t}{\Y_1 - \Y_2} \right)^{-2\alpha}.
\end{equation}

Combining \eqref{eq:powers_expressed} and \eqref{eq:powers_alpha} yields
\begin{equation}  \label{eq:powers_total}
e^{(\frac{1} {2} -\alpha) y_L} = ( \OnePM)^{2\alpha-1 } \cdot
 \left( \frac{t}{\Y_1 + \Y_2} \right)^{1-2\alpha} \ \text{and} \ e^{( \frac{1} {2} -\alpha) y_U} = (\OnePM)^{2\alpha -1} \cdot \left(
\frac{t}{\Y_1 - \Y_2}\right)^{1-2\alpha}.
\end{equation}
Substituting  ~\eqref{eq:powers_alpha} and ~\eqref{eq:powers_total}
into ~\eqref{eq:mu_intersection} we have
\begin{equation} \label{eq:with_powers_in}
\begin{split}
\mu_{\alpha}  ( \InterSec{p_1p_2} ) &  = - ( \OnePM )^{2\alpha} \cdot \frac{\beta}{\alpha} t \left(   \left( \frac{t}{\Y_1 + \Y_2} \right)^{-2\alpha}
 - \left( \frac{t}{\Y_1 - \Y_2}\right)^{-2\alpha}\right) \\
& \ \ \ \ \ \ \ + ( \OnePM )^{2\alpha} \cdot \frac{ \gamma}{ 2 } \left(  \Y_1 \left( \left( \frac{t}{\Y_1 + \Y_2} \right)^{1-2\alpha}
-\left( \frac{t}{\Y_1 - \Y_2} \right)^{1-2\alpha} \right) \right. \\
& \hspace{1cm} \left.+ \Y_2 \left( \left( \frac{t}{\Y_1 + \Y_2} \right)^{1-2\alpha}
+ \left( \frac{t}{\Y_1 - \Y_2}\right)^{1-2\alpha} \right) \right) -\OnePM \cdot \gamma \Y_2 e^{(  \frac{1}{2} - \alpha)h} \\
&=  -( \OnePM )^{2\alpha} \cdot\frac{\beta}{\alpha} t^{1- 2\alpha} \left(   (\Y_1 + \Y_2)^{2\alpha}  - (\Y_1 - \Y_2)^{2\alpha} \right) \\
&   \ \ \ \ \ \ \  +  ( \OnePM )^{2\alpha}  \cdot  \frac{\gamma }{2 } t^{1-2\alpha}
\left[  \Y_1 \left(  (\Y_1 + \Y_2)^{2\alpha - 1}
- ( \Y_1 - \Y_2)^{2\alpha - 1} \right) \right. \\
& \hspace{1cm} \left. + \Y_2 \left( (\Y_1 + \Y_2)^{2\alpha - 1}
+ (\Y_1 - \Y_2)^{2\alpha - 1} \right) \right]  - \OnePM \cdot \gamma \Y_2 e^{( \frac{1}{2} - \alpha)h} .
\end{split}
\end{equation}
Notice that
\begin{equation*}
\begin{split}
 & \Y_1 \left( ( \Y_1 + \Y_2)^{2\alpha - 1} - ( \Y_1 - \Y_2)^{2\alpha - 1} \right) \\
& \hspace{1cm} + \Y_2 \left( ( \Y_1 + \Y_2)^{2\alpha - 1} - ( \Y_1 - \Y_2)^{2\alpha - 1} \right) \\
&=(\Y_1 + \Y_2) ( \Y_1 + \Y_2)^{2\alpha- 1}  -
(\Y_1 - \Y_2) ( \Y_1 - \Y_2)^{2\alpha - 1}  \\
&=   ( \Y_1 + \Y_2)^{2\alpha}  - (\Y_1 - \Y_2)^{2\alpha}.
\end{split}
\end{equation*}
Substituting this into \eqref{eq:with_powers_in} yields
\begin{equation*}
\begin{split}
\mu_{\alpha}  ( \InterSec{p_1p_2})  & = -( \OnePM )^{2\alpha}\cdot \frac{\beta}{\alpha} t^{1- 2\alpha}  \left( ( \Y_1 + \Y_2)^{2\alpha}  - (\Y_1 - \Y_2)^{2\alpha} \right)  \\
& + ( \OnePM )^{2\alpha} \cdot \frac{ \gamma}{2} t^{1-2\alpha}
\left( ( \Y_1 + \Y_2)^{2\alpha}  - (\Y_1 - \Y_2)^{2\alpha} \right) - \OnePM \cdot \gamma \Y_2 e^{(\frac{1}{2}  - \alpha)h} \\
&= ( \OnePM)^{2\alpha} \cdot \frac{ \gamma } {4 \alpha}  t^{1-2\alpha} \left( (\Y_1 + \Y_2)^{2\alpha}
- ( \Y_1 - \Y_2)^{2\alpha} \right) - \OnePM \cdot \gamma \Y_2 e^{(\frac{1}{2}  - \alpha)h}.
\end{split}
\end{equation*}
Hence, the proof of part (i) is complete.

\vskip.3cm

Part (ii). We will consider three different subsets of the interval $(e^{R/4} \cdot \left( \Y_1 + \Y_2 \right), I_n)$. For the case where
$ e^{R/4} \cdot \left( \Y_1 + \Y_2 \right)  <_\Phi t \leq_\Phi \OnePM \cdot e^{h/2}(\Y_1 - \Y_2 )$ we will use part (i). (Note that
$\max\{\OnePM \cdot \left( \Y_1 + \Y_2 \right), Y_{12} \} <  e^{R/2} \cdot \left( \Y_1 + \Y_2 \right) $, since
$Y_{12} < 2 Y_1 Y_2 < 2 Y_1^2 \leq 2 R^4$.)
 Indeed, the expression for $\mu_{\alpha}  ( \InterSec{p_1p_2})$
 immediately implies
that for any such $t$ we have
$$\mu_{\alpha}  ( \InterSec{p_1p_2})  =O(1) \cdot t^{1-2\alpha} (Y_1 + Y_2)^{2\alpha}.$$

Now, assume that $\OnePM \cdot e^{h/2} (Y_1 - Y_2) <_\Phi t \leq_\Phi \OnePM \cdot
e^{h/2} (Y_1 + Y_2)$.
In this case, we have $y_U \in (h, R]$. Thus, any point $p$ with $y(p) =y$ and $y \in [y_L, h]$ belongs to $\InterSec{p_1p_2}$ if and only if
$ x(p_2) - \OnePM \cdot e^{\frac12 (y(p_2) + y)} \leq_\Phi x(p) \leq_\Phi x(p_1) + \OnePM \cdot e^{\frac12 (y(p_1) + y)}$.
Hence, we will use a modified version of~\eqref{eq:mu_intersection_UL}:
\begin{align} \label{eq:mu_intersection_Case_3}
\mu_{\alpha}  (\InterSec{p_1p_2}) &  = \mu_{\alpha}  ( \R{[y_L,R]}  \cap \InterSec{p_1p_2} )  \nonumber \\
& = \beta \int_{y_L}^{h} \left( (x(p_1) - x(p_2)) + \OnePM \cdot
e^{  \frac{y} {2}  } \left( e^{  \frac{y(p_1)} {2}} + e^{ \frac{ y(p_2)} {2}}  \right) \right) e^{-\alpha y} dy \nonumber \\
& = \frac{\beta}{\alpha}(x(p_1) - x(p_2)) \left( e^{-\alpha y_L} - e^{-\alpha h} \right)
\\
&\hspace{0.5cm} + \OnePM \cdot \frac{2\beta}{2\alpha -1}\left(  e^{  \frac{y(p_1)} {2}} + e^{ \frac{ y(p_2)} {2}}  \right)  \left( e^{( \frac{1} {2}  - \alpha) y_L} - e^{( \frac{1} {2}  - \alpha) h} \right).  \nonumber
\end{align}
Using~\eqref{eq:powers_alpha} and~\eqref{eq:powers_total}, the above becomes:
\begin{align*}
\mu_{\alpha}  ( \InterSec{p_1p_2} )
& = -\frac{\beta}{\alpha}t \left( ( \OnePM)^{2\alpha} \cdot \left(\frac{t}{Y_1 + Y_2} \right)^{-2\alpha} - e^{-\alpha h} \right) \\
& \hspace{0.5cm} +\OnePM \cdot \frac{\gamma} {2} \left( Y_1 + Y_2 \right)
\left( (\OnePM)^{2\alpha -1} \cdot \left(\frac{t}{Y_1 + Y_2} \right)^{1-2\alpha}- e^{(1/2 - \alpha) h} \right) \\
& =  (\OnePM)^{2\alpha} \cdot t^{1-2\alpha} (Y_1 + Y_2)^{2\alpha}\beta \left(-\frac{1}{\alpha} + \frac{2}{2\alpha -1} \right)
+\frac{\beta}{\alpha}t e^{-\alpha h}\\
&  \hspace{0.5cm} - \OnePM \cdot \frac{\gamma }{2 }\left( Y_1 + Y_2 \right)  e^{( \frac{1} {2} -\alpha ) h}.
\end{align*}
(Note that when $t= (Y_1 + Y_2) e^{h/2}$, the above expression is equal to 0.)
Now, since $-\frac{1}{\alpha} + \frac{2}{2\alpha -1}=\frac{1}{\alpha (2\alpha -1)}>0$
we obtain
$$
\mu_{\alpha}  ( \InterSec{p_1p_2} )
=O(1) \cdot \left( 
t^{1-2\alpha} (Y_1 + Y_2)^{2\alpha} 
+
t e^{-\alpha h} \right). 
$$
Recalling that $h= R-y(p_1)-C$ we deduce that $e^{-\alpha h} =O(1) \cdot e^{-\alpha R} Y_1^{2\alpha}= O(1) \cdot n^{-2\alpha} \cdot Y_1^{2\alpha}$. So
$$
t e^{-\alpha h} = O(1) \cdot t \cdot n^{-2\alpha} \cdot Y_1^{2\alpha} \stackrel{t<I_n = O(n)}{=} O(1) \cdot
t^{1-2\alpha} (Y_1 + Y_2)^{2\alpha},
$$
which yields \eqref{eq:far_apart} when $t$ satisfies $\OnePM \cdot e^{h/2} (Y_1 - Y_2) <_\Phi t \leq_\Phi \OnePM \cdot
e^{h/2} (Y_1 + Y_2)$.
\vskip.3cm

Finally, assume that $ \OnePM \cdot e^{h/2}( Y_1 + Y_2) <_\Phi t <_\Phi  I_n$. By Lemma~\ref{4.1} we have that $\InterSec{p_1p_2} = \emptyset$ and therefore
\begin{equation} \mu_{\alpha}  ( \Part{p_1p_2}) \leq
\mu_{\alpha}  (Z_h (p_1)) =\Theta (1) \cdot e^{(  \frac{1} {2}  -\alpha)R + \alpha y(p_1)}.
\end{equation}
Since $n = \nu e^{R/2}$, the above expression is $O(1) \cdot n^{1-2\alpha} \cdot Y_1^{2\alpha}$, which also yields \eqref{eq:far_apart}.
Combining the three cases together we deduce part (ii).
\end{proof}

\section{Proof of Theorem  ~\ref{prop:var_Shat} }  \label{sect4}
A central tool in the proof of our main results
is the Palm theory for Poisson processes (see~\cite{bk:Penrose, bk:StKendMecke, bk:LastPenrose}).
Let $\Space$ be a measurable space and $\mathcal{N} (\Space)$ the set of all
locally finite point configurations on $\Space$.
For a Poisson point process $\mathcal{P}$ on $\Space$ with intensity $\rho$ and a measurable non-negative function
$h: \Space^r \times\mathcal{N} (\Space) \rightarrow [0, \infty)$ the Campbell-Mecke formula (cf. Theorems 4.1, 4.4 of ~\cite{bk:LastPenrose}) gives
\begin{equation} \label{eq:Campbell-Mecke}
\begin{split}
& \mathbb{E}  \left[  \sum_{x_1,\ldots, x_r \in \mathcal{P}}^{\neq} h (x_1, \ldots, x_r,
\mathcal{P}) \right] \\
& = \int_{\Space} \cdots \int_{\Space}
\Ex {h(x_1,\ldots, x_r,\mathcal{P}\cup \{x_1,\ldots, x_r\})} \rho (x_1) \cdots \rho (x_r) dx_1 \cdots dx_r,
\end{split}
\end{equation}
where the sum ranges over all pairwise distinct $r$-tuples of points of $\mathcal{P}$.

Equation~\eqref{eq:Campbell-Mecke} can be used to calculate
$\Var [X]$ where $X = \sum_{p \in \mathcal{P}} \xi (p,\mathcal{P})$ for some score function
$\xi (p,\mathcal{P}\cup \{p\})$ on $\Space$ taking values in $\{0,1\}$. With $c^{\xi} (x_1,x_2) = c^{\xi}(x_1,x_2;\mathcal{P})$ (cf.~\eqref{Cov}), the definition of the variance together with ~\eqref{eq:Campbell-Mecke} yield:
\begin{align} \label{eq:variance}
\Var [X] &= \int_{\Space} \int_{\Space} c^{\xi} (x_1,x_2) \rho(x_1) \rho(x_2) dx_1dx_2
 +  \int_{\Space} \Ex{\xi (x, \mathcal{P}\cup \{x\})^2}\rho(x) dx  \nonumber \\
 &=\int_{\Space} \int_{\Space} c^{\xi} (x_1,x_2) \rho(x_1) \rho(x_2) dx_1dx_2
+ \mathbb{E}[X],
\end{align}
where the last equality holds since $\xi^2 = \xi$ (in fact the first equality does not require that the score function is an indicator random variable, but this is the case throughout our paper).

\subsection{Proof of expectation asymptotics ~\eqref{expected-iso}}

\begin{lemma} \label{muasym} 
 Uniformly for $p \in {D}([0, H])$ we have
\begin{equation} \label{muident}
\mu_{\alpha}  ( B(p) ) = \gamma  e^{y(p)/2} +o(1).
\end{equation}
\end{lemma}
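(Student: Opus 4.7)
The plan is to sandwich $\mu_\alpha(B(p))$ between $\mu_\alpha(B^-(p))$ and $\mu_\alpha(B^+(p))$ using the inclusion \eqref{eq:inclusion_mapped}, and compute each bound explicitly. For $p \in D([0,H])$ with $y(p) \in [0, 4\log R]$, both bounds will reproduce the main term $\gamma e^{y(p)/2}$ up to a factor $1\pm \eps$, with all remaining contributions being $o(1)$ uniformly; sending $\eps = \eps(n) \to 0$ at a suitable rate via Lemma~\ref{lem:relAngle_generic}(iii) will then close the gap.

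First I would compute $\mu_\alpha(B^-(p))$ directly. Since $B^-(p)$ is symmetric in $x$ with half-width $(1-\eps)e^{(y+y(p))/2}$ for $y \in [0, R-C-y(p))$, Fubini gives
$$
\mu_\alpha(B^-(p)) \;=\; 2\beta(1-\eps)\,e^{y(p)/2}\!\int_0^{R-C-y(p)} e^{(1/2-\alpha)y}\,dy \;=\; \frac{4\beta(1-\eps)}{2\alpha-1}\,e^{y(p)/2}\!\left[1 - e^{(1/2-\alpha)(R-C-y(p))}\right].
$$
Because $\gamma = 4\beta/(2\alpha-1)$ and $\alpha > 1/2$, the leading term equals $\gamma(1-\eps)e^{y(p)/2}$. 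The subtracted term is bounded, uniformly in $y(p) \in [0, H]$, by $\gamma e^{\alpha y(p)}e^{(1/2-\alpha)(R-C)} = O\!\left(R^{4\alpha}\, n^{1-2\alpha}e^{\alpha C}\right)$, which is $o(1)$ as long as $C(n) = O(\log R)$.

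Next I would bound $\mu_\alpha(B^+(p))$ from above by splitting it as the union of the tilted strip $\{(x,y)\in D : |x-x(p)|_\Phi < (1+\eps)e^{(y+y(p))/2}\}$ and the horizontal slab $\{(x,y)\in D : y > R-C-y(p)\}$. The integral over the (extended) tilted strip is handled identically and yields $\gamma(1+\eps)e^{y(p)/2} + o(1)$. For the slab, using $\beta I_n = \nu\alpha e^{R/2}$,
$$
\mu_\alpha\!\left(\{y > R-C-y(p)\}\cap D\right) \;=\; \frac{2\beta I_n}{\alpha}\!\left[e^{-\alpha(R-C-y(p))} - e^{-\alpha R}\right] \;=\; O\!\left(n^{1-2\alpha}e^{\alpha(C+H)}\right),
$$
which is $o(1)$ under the same $C(n) = O(\log R)$ assumption since $\alpha > 1/2$.

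Finally, to close the relative $\pm\eps$ gap, I would invoke Lemma~\ref{lem:relAngle_generic}(iii) and set $C(n) := K\log R$ for a large constant $K$, which gives $\eps(n) = \Theta(R^{-K})$. Then $\eps(n)e^{y(p)/2} \leq \eps(n)R^2 = O(R^{2-K}) = o(1)$ uniformly over $y(p) \in [0,H]$, and all the preceding $o(1)$ tail bounds remain valid. Combining the two sandwich estimates yields $\mu_\alpha(B(p)) = \gamma e^{y(p)/2} + o(1)$ uniformly on $D([0,H])$. The only delicate point is the coordination of the three rates $C(n)\to\infty$, $\eps(n)\to 0$, and the exponential suppression $n^{1-2\alpha}$ of the corner corrections, but these rates are easily compatible since the error bounds are polynomial in $R = 2\log(n/\nu)$ while the dominant decay is a power of $n$.
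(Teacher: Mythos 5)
Your proof is correct and follows essentially the same route as the paper's: sandwich $\mu_\alpha(B(p))$ between $\mu_\alpha(B^\pm(p))$ via \eqref{eq:inclusion_mapped}, evaluate the tilted-strip and slab integrals explicitly, and tune $C(n)=\Theta(\log R)$ and $\eps(n)=\Theta(R^{-K})$ via Lemma~\ref{lem:relAngle_generic}(iii) so that both the relative $\pm\eps$ error and the tail/slab corrections are $o(1)$ uniformly over $y(p)\in[0,H]$. The only cosmetic difference is that the paper fixes $K=5$ while you keep $K$ as a sufficiently large constant; the estimates are otherwise identical.
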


\noindent{\em Proof.}  We use the inclusions in~\eqref{eq:inclusion_mapped}.  By Lemma \ref{lem:relAngle_generic}(iii)
 we may put $C := C(n) := 5 \log R$ and  $\eps = O(e^{-5 \log R})$.  Now \eqref{eq:inclusion_mapped} yields:
\begin{equation} \label{eq:mu_ball_bound}
\begin{split}
&\mu_{\alpha}  ( B(p) ) \leq \mu_{\alpha}  (\BallUp)\\
&=  2\cdot  \OneUp\cdot \beta \int_0^{R-y(p)-C} e^{ \frac{1}{2} (y(p) + y) } e^{-\alpha y} dy  + \beta \cdot 2I_n \int_{R-y(p)-C}^R e^{-\alpha y} dy \\
& =  2\cdot \OneUp\cdot \beta e^{ \frac{y(p)} {2} } \int_0^{R-y(p)-C} e^{( \frac{1} {2}  -\alpha ) y} dy +\frac{\beta \pi}{\alpha}
e^{ \frac{R} {2} } \left(e^{-\alpha (R-y(p) -C)} - e^{-\alpha R}\right) \\
&=\OneUp \cdot \gamma e^{ \frac{ y(p)} {2} }\left(1- e^{(\frac{1} {2}  - \alpha) (R- y(p)-C)}\right) +\frac{\beta
\pi}{\alpha} e^{ \frac{R} {2} } \left(e^{-\alpha (R-y(p) -C)} - e^{-\alpha R}\right).
\end{split}
\end{equation}
where we recall that $2I_n = \pi e^{R/2}$ and where $\gamma$ and $\beta$ are related via $\gamma := 4 \beta/ (2\alpha - 1)$.
Recalling $\alpha \in (1/2, \infty)$, $y(p) \in (0, H), H = o(R)$, and $C= o(R)$, it follows
that uniformly over all such $p$
$$e^{ \frac{y(p)} {2}  + (\frac{1} {2}  - \alpha) (R- y(p)-C)}=o(1), \ \ e^{ \frac{R} {2} -\alpha (R-y(p) -C)} =o(1), \ \text{and} \  e^{\frac{R}{2} - \alpha R} =o(1).$$
We conclude that $\mu_{\alpha}  (B(p) ) \leq \OneUp \cdot \gamma  e^{ \frac{ y(p)} {2} } +o(1).$
(Recall that $\OneUp  := 1 + \eps$ and $\eps = O(e^{-5\log R})$.)
Notice that $y(p)/2 \in [0, 2 \log R)$ since $y(p) \in [0,  H)$.
Thus  $\eps \cdot e^{y(p)/2} = o(e^{-\frac12 \log R})$, uniformly over all such $p$,
whereby
\begin{equation} \label{eq:expec_score_Pab_up}
\mu_{\alpha}  (B(p) ) \leq \gamma  e^{ \frac{ y(p)} {2} } +o(1).
\end{equation}

To obtain a lower bound, we use the first inclusion in ~\eqref{eq:inclusion_mapped}:
\begin{align*}
\mu_{\alpha}  ( B(p) ) & \geq \mu_{\alpha}  (\BallDown )\\
& =  2\cdot \OneLow \cdot \beta \int_0^{R-y(p)-C} e^{ \frac{1}{2} (y(p) + y) } e^{-\alpha y} dy  \\
&= 2 \cdot \OneLow\cdot \beta e^{ \frac{ y(p)} {2} } \int_0^{R-y(p)-C} e^{( \frac{1} {2}  -\alpha ) y} dy \\
& = \OneLow \cdot  \gamma  e^{ \frac{y(p)} {2} }\left(1- e^{( \frac{1} {2}  - \alpha) (R- y(p)-C)}\right).
\end{align*}
Using again that $\eps = O(e^{-5\log R})$ and  $y(p)/2 \in [0, 2 \log R)$ we deduce
a matching lower bound:
\begin{equation} \label{eq:expec_score_Pab_low}
\mu_{\alpha}  ( B(p) ) \geq
\gamma  e^{  \frac{y(p)} {2} } +o(1).
\end{equation}
Combining~\eqref{eq:expec_score_Pab_up} with~\eqref{eq:expec_score_Pab_low} shows \eqref{muident}, as desired.  \qed

\vskip.2cm
We now establish expectation asymptotics ~\eqref{expected-iso}.  Since
$$
{\mathbb{E}} [\txi^{iso}(p, (\tilde{\cal P}_{\alpha}  \cap D) \cup \{p\})] = \exp( - \mu_{\alpha}  ( B(p) ) )
$$
it follows from \eqref{muident} that
\begin{equation} \label{eq:expec_score_Pabaa}
{\mathbb{E}}[ \txi^{iso}(p, (\tilde{\cal P}_{\alpha}  \cap D) \cup \{p\} )]  \sim \exp \left( - \gamma  e^{ \frac{y(p)} {2} }\right)
\end{equation}
uniformly over all $p \in \R{[0,H]}$. The Campbell-Mecke formula  \eqref{eq:Campbell-Mecke} and \eqref{eq:expec_score_Pabaa} yield
\begin{equation*}
\begin{split}
 \mathbb{E} [\tS^{iso}_H( \tilde{\cal P}_{\alpha}  \cap D) ] & =  \beta \int_{-I_n}^{I_n}\int_0^H  {\mathbb{E}}
[ \txi^{iso} ((x,y), (\tilde{\cal P}_{\alpha}  \cap D) \cup\{ (x,y)\} )]  e^{-\alpha y} dx dy \\
& \sim \beta \cdot 2I_n \int_0^H e^{-\gamma e^{ y/2} } e^{-\alpha y}dy
\sim \beta   \cdot \pi e^{R/2} \int_0^\infty e^{-\gamma e^{y/2 }} e^{-\alpha y} dy \\
& =2\alpha  n \int_0^\infty e^{-\gamma e^{y/2 }} e^{-\alpha y} dy,
\end{split}
\end{equation*}
since  $\beta = 2\nu \alpha/ \pi$. 
By Lemmas \ref{lem:variances_tilde_checked} and \ref{lem:exp_var_equiv}, we deduce ~\eqref{expected-iso} as desired. \qed

\subsection{Upperbounding  $\Var [\tilde{S}^{iso}_H( \tilde{\cal P}_{\alpha}  \cap D)]$ }
We derive the asymptotics for $\Var [\tilde{S}^{iso}_H( \tilde{\cal P}_{\alpha}  \cap D)]$ in two steps.
First, in this subsection we provide an upper bound via  the Poincar\'e inequality.  It turns out that
this is tight up to multiplicative constants. The next subsection provides a matching lower bound
for $\Var [\tilde{S}^{iso}_H( \tilde{\cal P}_{\alpha}  \cap D)] $ using the geometry of the intersection of hyperbolic balls
obtained in Section \ref{Prep}.

Let $F$ be a functional on a space $\Space$ hosting a Poisson process $\cP$ of intensity measure
$\lambda$. For a point $p \in \Space$ we define the \emph{first order linear operator}
$\nabla_p F:= F(\cP \cup \{ p\}) - F(\cP)$.
Then  the Poincar\'e inequality (inequality (1.1) in~\cite{ar:LPS16}) states that
\begin{equation}\label{eq:Poincare} \Var F  \leq \Ex{\int_\Space (\nabla_p F(\cP))^2 \lambda(dp)}.\end{equation}
We now put
$$
F: = \tilde{S}^{iso}_H (\tilde{P}_{\alpha} \cap D), \ \Space:=\Rcal, \ \cP := \tilde{\cal P}_{\alpha} \cap D \ \mbox{and} \ \lambda:= \mu_\alpha.
$$
Note that $|\nabla_p F|$ is stochastically dominated from above by the number of points of
$( \tilde{\cal P}_{\alpha} \cap D)  \cup \{p\}$ in $B(p)$.
By Lemma~\ref{muasym} we deduce that $|B(p) \cap (\tilde{\cal P}_{\alpha} \cap D) |$ is Poisson-distributed with parameter equal to
$$ \mu_{\alpha}  ( B(p) ) \leq \gamma'  e^{ \frac{y(p)} {2}} ,$$
uniformly over all $p \in D$, for some constant $\gamma'>0$.
Thus,
$$\Ex{ (\nabla_p F(\cP))^2} = O(1) \cdot e^{y(p)}
$$
which implies that
\begin{align*}
\Ex{ \int_\Space (\nabla_p F(\cP))^2 \lambda (dp)}  & \leq  \int_\Space \Ex{(\nabla_p F(\cP))^2} \lambda(dp)
 = O(1) \cdot n \int_0^{R} e^{(1-\alpha )y}dy.
 \end{align*}
In other words, evaluating the integral and using $e^{(1 - \alpha)R} = n^{2(1 - \alpha)}$ in the range $\alpha \in (1/2, 1)$, we get  
\begin{align}
\Var [\tilde{S}^{iso}_H (\tilde{P}_{\alpha} \cap D)]
&= \begin{cases} \label{ubound}
O (n^{3-2\alpha}) & \ \alpha \in ( \frac{1} {2} ,1) \\
O (n R) & \ \alpha =1 \\
O (n) & \ \alpha \in (1, \infty)
 \end{cases}.
\end{align}


\subsection{Lowerbounding  $\Var [\tilde{S}^{iso}_H (\tilde{P}_{\alpha} \cap D)]$}
Recall the definition of $ c^{\tilde{\xi}^{iso}}(p_1,p_2)$ at \eqref{Cov}.  By ~\eqref{eq:variance}, we have $\Var [\tilde{S}^{iso}_H(\tilde{\cal P}_{\alpha} \cap D)  ] = V_1 + V_2$ with
$$
V_1 : =\beta^2  \int_{-I_n}^{I_n}\int_0^H \int_{-I_n}^{I_n}\int_0^H   c^{\tilde{\xi}^{iso}} ((x_1,y_1),(x_2,y_2))
e^{-\alpha y_1} e^{-\alpha y_2} dy_2 dx_2 dy_1 dx_1
$$
and
$$
V_2 :=\mathbb{E} [ \tilde{S}^{iso}_H(\tilde{\cal P}_{\alpha} \cap D)]  = \mathbb{E} \left[ \sum_{p \in \tilde{\cal P}_{\alpha} \cap D([0,H])  }
\txi^{iso}(p,  \tilde{\cal P}_{\alpha} \cap D) \right] .
$$
Since $V_2 \geq 0$ it  suffices to provide a lower bound on $V_1$ matching that at \eqref{ubound}. Put
\begin{align*}
& \Cov^- (x_1, y_1, y_2, t) \\
& :=  \mathbb{E} [ \txi^{iso}(p_1, (\tilde{\cal P}_{\alpha} \cap D) \cup \{p_1\})]
\mathbb{E} [ \txi^{iso}(p_2, (\tilde{\cal P}_{\alpha} \cap D)  \cup \{p_2\})]
\left(  \exp \left(\mu_{\alpha}  ( \Part{p_1p_2}^-) \right)  -1 \right),
\end{align*}
where $x(p_1 ) = x_1$, $y (p_1) = y_1$, $y(p_2) = y_2$ and $| x (p_2 ) - x (p_1) |_\Phi = t$. By symmetry, it suffices to consider the case where $y_2 \leq y_1$ and $x(p_2) >_\Phi x(p_1)$.
Indeed, this is one of four possible cases regarding the relative positions of $p_1,p_2 \in \R{[0,H]}$ and it accounts for the pre-factor 4 appearing  in front of our upcoming lower bounds.

Note that if $x(p_1)=0$ and $x (p_2 ) - x (p_1)  >_\Phi  0$, then in fact
$|x(p_2) - x(p_1)|_\Phi = x(p_2) - x(p_1)$.
Considering points $p_1$ and $p_2$ such that
$x(p_1 ) = 0$, $y (p_1) = y_1$, $y(p_2) = y_2$ and $x (p_2 ) - x (p_1)  = t \in (0, I_n)$ we have
by \eqref{eq:cov_intersection} the bound
$ c^{\tilde{\xi}^{iso}} (p_1,p_2) \geq \Cov^- (x_1, y_1, y_2, t).$

Therefore,
\begin{equation*}
V_1 \geq V_1^- :=4 \beta^2 \int_{-I_n}^{I_n} \int_0^H \int_0^{y_1}\int_{0}^{I_n}
\Cov^- (x_1, y_1, y_2, t) e^{-\alpha y_2} e^{-\alpha y_1} dt dy_2 d y_1dx_1.
\end{equation*}

\vskip.3cm
We will drop the $-$  sign and write $\Cov (x_1, y_1, y_2, t) := \Cov^- (x_1, y_1, y_2, t).$
Note that  $\Cov (x_1, y_1, y_2, t) $ does not depend on $x_1$ as the Poisson process $\tilde{\cal P}_{\alpha} \cap  D$  is stationary with respect to the spatial $x$-coordinate. Therefore, we can write
 \begin{equation*}
V_1^- = 8 \beta^2  I_n \int_0^H \int_0^{y_1}\int_{0}^{I_n}
\Cov (0, y_1, y_2, t) e^{-\alpha y_2} e^{-\alpha y_1}  dt dy_2 d y_1.
\end{equation*}
We change variables and, as before, put $\Y_i = e^{y_i/2}, i = 1,2$. Hence, $2 d \Y_i =  e^{y_i/2} dy_i$ and $dy_i =
2 \Y_i^{-1}  d\Y_i$. Also, $e^{-\alpha y_i} = \Y_i^{-2\alpha}$.
Moreover, as $y_1$ ranges from $0$ to $H$, the variable $\Y_1$ ranges from $1$ to $e^{H/2} = e^{2\log R} = R^2$.
Thus,
 \begin{equation} \label{eq:variance_total}
 V_1^- =32 \beta^2 I_n \int_1^{R^2} \int_1^{\Y_1}\int_{0}^{I_n}
\Cov (0, \Y_1, \Y_2, t) \frac{1}{\Y_1^{2\alpha +1}} \frac{1}{\Y_2^{2\alpha +1}}  dt d\Y_2 d \Y_1.
\end{equation}
To simplify notation we shall write
$$e^{h/2} := \OneLow \cdot e^{h/2}. $$
This amounts to transferring the term $\OneLow$ inside $h$ changing the constant $C$ to $C-2 \ln \OneLow$. It will make no difference.

Let us observe that
\be
\int_{0}^{I_n}  \Cov (0, \Y_1, \Y_2, t) dt \geq J_1 + J_2,
\ee
where, recalling $Y_{12}$ defined at \eqref{Yonetwo}, we have
\begin{align*}
J_1 &  :=   \int_0^{Y_{12}}  \Cov (0, \Y_1, \Y_2, t) dt,\ \mbox{and} \
J_2  :=  \int_{Y_{12} \vee \OneUp\cdot (\Y_1 + \Y_2)}^{e^{h/2}(\Y_1 - \Y_2)} \Cov (0, \Y_1, \Y_2, t) dt.
\end{align*}
By ~\eqref{eq:condition_inc}  the covariance is negative only when $t$ belongs to the range covered by the $J_1$ integral.
For $t \in (Y_{12}, I_n]$, the covariance is positive. Thus, for the range $(Y_{12}, I_n]$ it suffices to use the subset given by the smaller range of $J_2$ which in turn is covered by Lemma~\ref{lem:mu_intersection}(i).

For $k=1,2$ we set
\begin{equation} \label{eq:gen_integrals}
L_k  : = \int_1^{R^2} \int_1^{\Y_1} J_k \alpha^2 Y_1^{-2\alpha-1} Y_2^{-2\alpha-1} dY_2 dY_1.
\end{equation}

We  now  show that $|L_1| =O (1)$ for all $\alpha \in (1/2, \infty)$, whereas we derive lower bounds on $L_2$ which match the upper bounds in \eqref{ubound}.

\subsubsection{Calculating integral $L_1$}

Formula \eqref{eq:expec_score_Pabaa} and the second part of the covariance formula \eqref{eq:cov_intersection} give for all $t \in [0, \Y_{12}]$
\be
\Cov (0, \Y_1, \Y_2, t) \sim  - \exp (-\gamma e^{y_1/2}) \exp (-\gamma e^{y_2/2}) = - \exp \left(-\gamma \left(Y_1 + Y_2 \right) \right),
\ee
uniformly for all $Y_2\leq Y_1\leq R^2$, whereby
$$
J_1=\int_0^{Y_{12}}  \Cov (0, \Y_1, \Y_2, t) dt \sim
- Y_{12} \exp \left(-\gamma \left(Y_1 + Y_2 \right) \right).
$$
Therefore, since $Y_{12} = Y_1 Y_2 (1+o(1))$ by~\eqref{Yonetwo}, we eventually obtain:
\begin{equation*}
\begin{split}
L_1 &= \int_1^{R^2} \int_1^{\Y_1} J_1 \frac{2}{\Y_1^{2\alpha+1}} \frac{2}{\Y_2^{2\alpha+1}}
 d\Y_2 d\Y_1
 \sim  -  4\int_1^{R^2} \int_1^{\Y_1}  \frac{e^{-\gamma Y_1}}{\Y_1^{2\alpha}} \frac{e^{-\gamma Y_2}}{\Y_2^{2\alpha}}
d\Y_2 d\Y_1. 
\end{split}
\end{equation*}
Hence, we deduce for all $\alpha \in (1/2, \infty)$ that
$|L_1| = O(1).$

\subsubsection{The lower bound on integral $L_2$}
Let $s:= \max \{ 5\alpha/ (2\alpha -1),5\}$ and $r:= R^s e^{-h/2}$.
Given the domain $\Dom:=\{ (Y_1,Y_2) \ : \ 1 \leq Y_2 \leq Y_1 \leq R^2  \}$ consider the sub-domain
$$
\Dom' :=\{ (Y_1,Y_2) \in \Dom \ : \ Y_1 - Y_2 \geq r \}.
$$

It suffices to consider the contribution to $L_2$ that comes from  the domain  $\Dom'$.
That is, we will bound from below the integral
$$
L_2':=\int_{\Dom'} J_2 \frac{1}{\Y_1^{2\alpha+1}} \frac{1}{\Y_2^{2\alpha+1}}d\Y_2 d \Y_1.
$$

Combining ~\eqref{eq:cov_intersection},   Lemma~\ref{lem:mu_intersection}, \eqref{eq:expec_score_Pabaa} and recalling  $\eta (Y_2):= \gamma Y_2 e^{(1/2 -\alpha) h}$ we obtain
\begin{equation*}
\begin{split}
&\Cov (0, \Y_1, \Y_2, t) \\
& = \mathbb{E} [ \txi^{iso}(p_1, (\tilde{\cal P}_{\alpha} \cap D)  \cup\{p_1\} )]
 \mathbb{E} [ \txi^{iso}(p_2, (\tilde{\cal P}_{\alpha} \cap D) \cup \{p_2\} )]
\left( \exp \left(\kappa t^{1-2\alpha} - \eta (Y_2) \right) - 1\right)  \\
&  {\sim}  \exp (-\gamma (Y_1 +Y_2))  \left( \exp \left(\kappa t^{1-2\alpha} - \eta(Y_2) \right) - 1\right).
\end{split}
\end{equation*}
For simplicity, we set $t_1 := Y_{12} \vee   1_{+ \epsilon} (\Y_1 + \Y_2)$ and $t_2:=e^{h/2}(\Y_1 - \Y_2)$,
whereby
\begin{equation} \label{eq:J_3}
\begin{split}
 J_2 &=\int_{t_1}^{t_2}  \Cov (0, \Y_1, \Y_2, t) dt  \\
 & \sim
 \exp (-\gamma (Y_1 +Y_2))
  \int_{t_1}^{t_2}
  \left( \exp \left(\kappa t^{1-2\alpha} -\eta (Y_2) \right) - 1\right) dt.
\end{split}
\end{equation}

Consider the integral in ~\eqref{eq:J_3} when  $(Y_1,Y_2) \in \Dom'$.
The following lemma shows that its value changes radically as $\alpha$ crosses 1.  The regimes for this lemma induce three regimes for $L_2'$.

\begin{lemma} \label{clm:integral_cases_lb}  There is a $\delta > 0$ such that 
for any $n$ sufficiently large, any $(Y_1,Y_2) \in \Dom'$ and $\alpha \in (1/2, \infty)$, we have
\begin{equation*}
 \int_{t_1}^{t_2} t^{1-2\alpha}dt \geq
\begin{cases}
2 \alpha (1+\delta) t_2^{2(1-\alpha)} & \alpha \in (\frac{1} {2} ,1) \\
\frac{1}{5} \ln t_2 & \alpha =1 \\
\frac{1}{4(\alpha - 1)} t_1^{2(1-\alpha)} & \alpha \in (1, \infty)
\end{cases}.
\end{equation*}
\end{lemma}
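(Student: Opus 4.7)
The plan is to compute $\int_{t_1}^{t_2} t^{1-2\alpha}\,dt$ directly from its antiderivative in each of the three $\alpha$-ranges, then exploit the fact that $(Y_1,Y_2) \in \Dom'$ forces $t_2 \geq e^{h/2} r = R^s$, while a uniform bound $t_1 = O(Y_1 Y_2) = O(R^4)$ holds throughout the region. The quantitative rate at which $t_1/t_2 \to 0$ is exactly what the choice $s = \max\{5\alpha/(2\alpha-1),5\}$ is calibrated to produce.

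For $\alpha \in (1/2, 1)$ I would write
\begin{equation*}
\int_{t_1}^{t_2} t^{1-2\alpha}\,dt = \frac{t_2^{2(1-\alpha)}}{2(1-\alpha)} \Bigl(1 - (t_1/t_2)^{2(1-\alpha)}\Bigr)
\end{equation*}
and observe that $\tfrac{1}{2(1-\alpha)}$ strictly exceeds $2\alpha$, because $1 - 4\alpha(1-\alpha) = (1-2\alpha)^2 > 0$. This strict inequality leaves room to pick $\delta > 0$ and $\rho > 0$ so that $\tfrac{1}{2(1-\alpha)}(1 - \rho) \geq 2\alpha(1 + \delta)$; it then suffices to show $(t_1/t_2)^{2(1-\alpha)} \leq \rho$ for all $n$ large. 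Using $t_1 = O(R^4)$ and $t_2 \geq R^s$ with $s = 5\alpha/(2\alpha-1) > 4$, one gets $t_1/t_2 = O(R^{4 - s}) \to 0$, which finishes the case.

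For $\alpha > 1$ the exponent $2(1-\alpha)$ is negative so I would factor out the \emph{larger} power $t_1^{2(1-\alpha)}$, giving
\begin{equation*}
\int_{t_1}^{t_2} t^{1-2\alpha}\,dt = \frac{t_1^{2(1-\alpha)}}{2(\alpha-1)}\Bigl(1 - (t_1/t_2)^{2(\alpha-1)}\Bigr).
\end{equation*}
With $s = 5$, the ratio $(t_2/t_1)^{2(\alpha-1)} \geq (R^5/O(R^4))^{2(\alpha-1)} = \Omega(R^{2(\alpha-1)})$ tends to infinity, so for $n$ large the bracket exceeds $1/2$ and the claimed lower bound $\tfrac{1}{4(\alpha-1)}\,t_1^{2(1-\alpha)}$ follows.

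The case $\alpha = 1$ is what I anticipate to be the main obstacle, since the antiderivative is $\ln(t_2/t_1)$ and the constants line up at the boundary: with $\ln t_1 \leq 4\ln R + o(1)$ (from $t_1 \leq (1+\lambda_n)Y_1 Y_2$ together with $Y_1,Y_2 \leq R^2$) and $\ln t_2 \geq 5 \ln R$ (from $t_2 \geq e^{h/2} r = R^5$), the estimate $\ln(t_2/t_1) \geq \ln R - o(1)$ is exactly tight against $\tfrac{1}{5}\ln t_2 = \ln R$ in the worst configuration $Y_1 \approx R^2, \, Y_2 \approx R^2 - r$. To push the bound through I expect to use that $\lambda_n \to 0$ uniformly over $y(p), y(p') \in [0,H]$ from Lemma \ref{lem:relAngle_generic}(ii), and to exploit that outside a vanishing sliver of $\Dom'$ the quantity $\ln t_2$ strictly exceeds $5 \ln R$, absorbing the $o(1)$ error for $n$ large. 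The remaining sliver where the bound is sharp is handled by a direct check using the explicit expression for $t_1$ and $t_2$ there.
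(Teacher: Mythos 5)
Your treatment of $\alpha\in(\frac12,1)$ and $\alpha\in(1,\infty)$ is correct and matches the paper's proof: compute the antiderivative, factor out the dominant endpoint power, and use that $t_1/t_2\to 0$ uniformly over $\Dom'$, with the strict gap $1 - 4\alpha(1-\alpha) = (1-2\alpha)^2 > 0$ supplying the $\delta$-slack in the first regime.

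For $\alpha=1$, however, your sketch has a genuine gap and rests on a misconception. You bound $\ln t_1 \leq 4\ln R + o(1)$ using only $Y_1, Y_2 \leq R^2$, then conclude the inequality $\ln(t_2/t_1) \geq \tfrac15\ln t_2$ is ``exactly tight'' and might fail by an $o(1)$ margin, and you propose to excise a boundary sliver of $\Dom'$ and do a separate direct check there. That is not an acceptable patch: the lemma asserts the bound for \emph{every} $(Y_1,Y_2)\in\Dom'$, so a sliver cannot be excised, and you never actually carry out the direct check. More to the point, the worry is spurious, because you did not use the defining constraint $Y_1 - Y_2 \geq r$ of $\Dom'$. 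That constraint gives $Y_1 Y_2 \leq Y_1(Y_1 - r) \leq R^4 - Y_1 r$, so $(1+\lambda_n)Y_1Y_2 \leq R^4$ as soon as $\lambda_n \leq r/Y_1$. Since $e^{-h/2}=Y_1 e^{(C-R)/2}$, one has $r/Y_1 = R^{s}e^{C/2}e^{-R/2}$, which decays like a polynomial in $R$ times $e^{-R/2}$, whereas $\lambda_n$ from Lemma~\ref{lem:relAngle_generic}(ii) decays like a polynomial in $R$ times $e^{-R}$; hence $t_1 \leq R^4$ holds outright, without $o(1)$ slack, on all of $\Dom'$ once $n$ is large. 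Coupled with $t_2 \geq R^s \geq R^5$ this gives $\ln t_1/\ln t_2 \leq 4/5$, and the paper's one-line computation $\ln(t_2/t_1)=\ln t_2\bigl(1-\ln t_1/\ln t_2\bigr)\geq\tfrac15\ln t_2$ closes the case with no decomposition of $\Dom'$.
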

\noindent
{\em Proof.} Elementary integration gives the three different cases:
\begin{equation*}
\begin{split}
\int_{t_1}^{t_2} t^{1 - 2\alpha} dt =
\begin{cases}
\frac{1}{2(1-\alpha)} \left( t_2^{2(1-\alpha)} - t_1^{2(1-\alpha)}\right) & \alpha \in ( \frac{1} {2} ,1)  \\
\ln \frac{t_2}{t_1} & \alpha =1 \\
\frac{1}{2(\alpha - 1)} \left( t_1^{2(1-\alpha)} - t_2^{2(1-\alpha)}\right) & \alpha \in (1, \infty)
\end{cases}.
\end{split}
\end{equation*}
By definition, for any $(Y_1,Y_2) \in \Dom'$, we have that $t_2 > t_1 + R^5/2$, whereas $t_1 = Y_1 Y_2 \vee (Y_1 +Y_2) \leq R^4$, for $n$ sufficiently large.
Thus, $t_2 / t_1 \geq R/2 \to \infty$ as $n\to \infty$.
These facts imply that if $\alpha \in (1/2, 1)$, then for some $\delta \in (0, \infty)$
$$t_2^{2(1-\alpha)} - t_1^{2(1-\alpha)} \stackrel{4 \alpha (1-\alpha) <1}{>} (1+\delta) 4\alpha (1-\alpha) t_2^{2(1-\alpha)}, $$
whereas if $\alpha =1$, then
$$ \ln \frac{t_2}{t_1} = \ln t_2 \left( 1 - \frac{\ln t_1}{\ln t_2}\right) >
\ln t_2 \left( 1 - \frac{\ln R^4}{\ln{R^5}}\right) = \frac{1}{5}\ln t_2.
$$
Finally if $\alpha \in (1, \infty)$, then $ t_1^{2(1-\alpha)} - t_2^{2(1-\alpha)} > \frac{1}{2} t_1^{2(1-\alpha)}$, 
for $n$ sufficiently large. The lemma follows.  \qed

\vskip.3cm

For $(Y_1, Y_2) \in \Dom'$  and $n$ sufficiently large  we have $Y_{12} \vee \OneUp \cdot (\Y_1 + \Y_2) \leq R^4$. In this domain
the definition of $s$ gives
\begin{equation}\label{eq:b2-b1_diff}
 e^{h/2}(\Y_1 - \Y_2) - (Y_{12} \vee \OneLow \cdot (\Y_1 + \Y_2) )\geq R^5 - R^4> R^5/2.
\end{equation}

\subsubsection{Three regimes for integral $L_2'$}

\noindent \emph{4.3.3 (a) The integral $L_2'$,   $\alpha \in (1/2,1)$}.
Recalling the definitions of $t_1$ and $t_2$ and appealing to Lemma \ref{clm:integral_cases_lb}, we deduce the following lower bound
\begin{equation*}
\begin{split}
&\int_{t_1}^{t_2}
  \left( \exp \left(\kappa t^{1-2\alpha} -\eta(Y_2) \right) - 1\right) dt \geq  \int_{t_1}^{t_2}
  \left( \kappa t^{1-2\alpha} -\eta (Y_2)\right) dt \\
  &\geq 2\alpha (1+\delta) \kappa e^{h(1-\alpha )} (\Y_1 - \Y_2)^{2(1-\alpha)}
  - \gamma (Y_1 -Y_2)Y_2 e^{h(1-\alpha)}.
\end{split}
\end{equation*}
Recall from \eqref{kap} that $\kappa: =\OneLow\cdot  \frac{\gamma}{4\alpha} ((Y_1+Y_2)^{2\alpha}-(Y_1- Y_2)^{2\alpha})$.
For any $\eps>0$ sufficiently small (in terms of $\delta$),
we have  $\OneLow \cdot (1+\delta) > 1+ \delta/2$.
Therefore,
\begin{equation} \label{eq:J_3low}
\begin{split}
&2 \alpha(1+ \frac{\delta} {2} ) \kappa  (\Y_1 - \Y_2)^{2(1-\alpha)}  -\gamma (Y_1 -Y_2)Y_2 \\
&  \geq \gamma (Y_1- Y_2)\left( \frac{1+\delta/2}{2}
\left( \left(\frac{Y_1 +Y_2}{Y_1- Y_2}\right)^{2\alpha} (Y_1- Y_2) - (Y_1-Y_2) \right)-Y_2 \right) \\
&\geq \gamma (Y_1- Y_2)\left( \frac{1+\delta/2}{2}
\left( \left(\frac{Y_1 +Y_2}{Y_1- Y_2}\right) (Y_1- Y_2) - (Y_1-Y_2) \right)-Y_2 \right) \\
&=\gamma (Y_1- Y_2)\left( \frac{1+\delta/2}{2}
\left( Y_1 +Y_2- (Y_1-Y_2) \right)-Y_2 \right) \\
&=\gamma (Y_1- Y_2)\left( \frac{2(1+\delta/2)}{2} Y_2 -Y_2 \right)
 =
\frac{\gamma \delta}{2} (Y_1- Y_2)Y_2
 := Q_1(Y_1, Y_2) > 0.
\end{split}
\end{equation}

We then deduce that
\begin{equation}  \label{Qbd}
\frac{J_2} {e^{h(1-\alpha)} \exp \left( -\gamma (Y_1 + Y_2)\right)} \geq Q_1(Y_1,Y_2).
\end{equation}
Recall that $h=R- y(p_1) - C + 2\ln \OneLow$. This implies that
$$e^{h(1-\alpha)} = e^{R(1-\alpha)} e^{-(y(p_1)+C - 2\ln \OneLow)(1-\alpha)} \stackrel{Y_1 = e^{y(p_1)/2}}{=}   \Omega(1) \cdot
e^{R(1-\alpha)} \cdot Y_1^{-2(1-\alpha)}.$$
The above bounds imply that
\begin{equation*}
L_2' = \Omega(1) \cdot e^{R(1-\alpha)} \int_{\Dom'}
\exp \left( -\gamma (Y_1 + Y_2)\right) Q_1 (Y_1,Y_2) \frac{Y_1^{2(1-\alpha)}}{\Y_1^{-2\alpha+1}}
\frac{1}{\Y_2^{2\alpha+1}} d\Y_2 d \Y_1.
\end{equation*}
Thus, for $\alpha \in (1/2, 1)$ we have
\begin{equation} \label{eq:L_3-a<1-final} L_2 \geq L_2' = \Omega \left( e^{R(1-\alpha)}\right).
\end{equation}

\noindent \emph{4.3.3 (b) The integral $L_2'$, $\alpha = 1$.} Note first that
\begin{equation} \label{eq:kappa_alpha=1}
\begin{split}
\kappa &=  \OneLow \cdot \frac{\gamma}{4\alpha} ((Y_1+Y_2)^{2\alpha}-(Y_1- Y_2)^{2\alpha}) = \OneLow \cdot \frac{\gamma}{4}
((Y_1+Y_2)^2-(Y_1-Y_2)^2) \\
&=\OneLow \cdot \gamma Y_1 Y_2.
\end{split}
\end{equation}

In this case, the integral in~\eqref{eq:J_3} is bounded  below as follows:
\begin{equation*}
\begin{split}
&\int_{t_1}^{t_2} (e^{ \frac{\kappa} {t}  - \eta (Y_2)} -1 )dt  \geq \int_{t_1}^{t_2}(\frac{\kappa} {t}  - \eta (Y_2) )dt \\
& \geq \frac{\kappa}{5}\ln \left( e^{\frac{h} {2} }(\Y_1 - \Y_2)\right)
-\gamma Y_2 e^{-\frac{h} {2} } \int_0^{e^{  \frac{h} {2}}(\Y_1 - \Y_2)} dt  \\
& = \frac{\kappa}{5}\ln \left( e^{\frac{h} {2} }(\Y_1 - \Y_2)\right)   -\gamma Y_2 (Y_1- Y_2) \\
&= \kappa \frac{h}{10} + \frac{\kappa}{5} \ln \left(\Y_1 - \Y_2\right) -
\gamma Y_2 (Y_1 - Y_2) \\
& \stackrel{\eqref{eq:kappa_alpha=1},Y_1\geq Y_2}{\geq} \OneLow \cdot
 \frac{\gamma}{10} Y_1 Y_2 \frac{h}{2} +\OneLow \cdot  \frac{\gamma}{5} Y_1 Y_2
\ln \left( Y_1 - Y_2\right) -
\gamma Y_1 Y_2\\
&= \OneLow \cdot \frac{\gamma}{5} Y_1 Y_2 \left(\frac{h}{4} +
\ln \left( Y_1 - Y_2 \right) - 5 \cdot \OneLow^{-1} \right) \\ 
&\stackrel{\eps < 1/8}{\geq} \OneLow \cdot \frac{\gamma}{5} Y_1 Y_2 \left(\frac{h}{4} +
\ln \left( Y_1 - Y_2 \right) - 6 \right) \
\end{split}
\end{equation*}
where the last inequality holds for $n$ sufficiently large, if we put 
$\eps = O(e^{-5 \log R})$ (cf.~Lemma \ref{lem:relAngle_generic}(iii)).
In particular, if we let $(Y_1,Y_2)\in \Dom'' := \{(Y_1,Y_2) \in \Dom' \ : \ Y_1 - Y_2 > e^6 \}$,
then
$$\int_{t_1}^{t_2} (e^{ \frac{\kappa} {t} - \eta (Y_2)} -1 )dt  \geq \OneLow \cdot
\gamma Y_1 Y_2 \frac{h}{20}.
$$
Also, recall that $h:= R- y(p_1) - C + 2\ln \OneLow$.  Since $y(p_1) <H$, it follows that for $n$ sufficiently large we have
$h  \in (R/2, R]$.
Combining this observation with the above lower bound,~\eqref{eq:J_3} yields
$$ J_2 \geq \OneLow \cdot \frac{h}{2} e^{-\gamma (Y_1 + Y_2)}  \frac{\gamma}{20} Y_1 Y_2
\geq   \frac{R \gamma }{80} e^{-\gamma (Y_1 + Y_2)}   Y_1 Y_2.$$
Therefore
\begin{equation*}
\begin{split}
L_2' &=   \int_{\Dom'}  J_2  \frac{1}{Y_1^{2\alpha+1}}
\frac{1}{Y_2^{2\alpha+1}} dY_2 d Y_1
\geq  \int_{\Dom''}  J_2  \frac{1}{Y_1^{2\alpha+1}}
\frac{1}{Y_2^{2\alpha+1}}dY_2 d Y_1
\\
& \geq  \frac{R \gamma}{80}  \int_e^{R^2} \int_1^{Y_1 -e}
e^{-\gamma (Y_1 + Y_2)}   Y_1 Y_2
\frac{1}{Y_1^{2\alpha+1}} \frac{1}{Y_2^{2\alpha+1}} dY_2 d Y_1  =\Omega \left( R \right).
\end{split}
\end{equation*}

\noindent \emph{4.3.3 (c) The integral $L_2'$, $\alpha \in (1, \infty).$} Recall by \eqref{eq:b2-b1_diff}
that for any $(Y_1,Y_2) \in \Dom'$, we have
$$ e^{h/2}(Y_1 - Y_2) >  Y_{12} \vee \OneLow \cdot (\Y_1 + \Y_2) + R^s,$$
for some $s= s(\alpha ) \geq 5$.
Since $Y_{12} \vee \OneLow \cdot (Y_1 + Y_2) \leq R^4$ when $n$ is sufficiently large, we get
$$ e^{h/2}(Y_1 - Y_2) > 2  (Y_{12} \vee \OneLow \cdot (Y_1 + Y_2)). $$

Using the third  case in Lemma  ~\ref{clm:integral_cases_lb},
the integral in~\eqref{eq:J_3} is bounded from below as follows:
\begin{equation*}
\begin{split}
&\int_{t_1}^{t_2}
  \left( \exp \left(\kappa t^{1-2\alpha} -\eta(Y_2) \right) - 1\right) dt \geq
   \int_{t_1}^{t_2}
  \left( \kappa t^{1-2\alpha} -\eta (Y_2)\right) dt \\
  &\geq
  \frac{\kappa}{4(\alpha-1)} (Y_{12} \vee (Y_1 + Y_2))^{2(1-\alpha)}
  - \gamma (Y_1 -Y_2)Y_2 e^{h(1-\alpha)}  \\
  &\geq   \frac{\kappa}{4(\alpha-1)} (Y_{12} \vee (Y_1 + Y_2))^{2(1-\alpha)}
  - \gamma R^4 e^{h(1-\alpha)}  \\
  &= \frac{\kappa}{4(\alpha-1)} (Y_{12} \vee (Y_1 + Y_2))^{2(1-\alpha)} -o(1).
\end{split}
\end{equation*}
Hence,
\begin{equation*}
\begin{split}
L_2' & =   \int_{\Dom'}  J_2  \frac{1}{Y_1^{2\alpha+1}}
\frac{1}{Y_2^{2\alpha+1}} dY_2 d Y_1 \\
& \geq
\frac{1}{4(\alpha-1)}
\int_{\Dom'} \kappa \left[(Y_{12} \vee (\OneLow \cdot (Y_1 + Y_2))\right]^{2(1-\alpha)}
e^{-\gamma (Y_1 + Y_2)}
\frac{1}{Y_1^{2\alpha+1}} \frac{1}{Y_2^{2\alpha+1}} dY_2 d Y_1 \\
& \ \ \ \ \ - o(1)
\int_{\Dom'}
e^{-\gamma (Y_1 + Y_2)}
\frac{1}{Y_1^{2\alpha+1}} \frac{1}{Y_2^{2\alpha+1}} dY_2 d Y_1 =\Omega(1).
\end{split}
\end{equation*}

\subsection{Proof of growth rates  for $\Var[S^{iso}( {\cal P}_{\alpha, n} )]$}
We have now estimated the two summands that bound the
main term of $\Var[\tilde{S}^{iso}_H( \tilde{P}_{\alpha} \cap D )]$ from below. Our findings are summarised as follows: 
\begin{equation} \label{eq:L_3}
|L_1|  = \Theta (1),   \ \mbox{and} \  L_2= \begin{cases}
\Omega \left( e^{R(1-\alpha )}\right) & \alpha \in ( \frac{1} {2} ,1) \\
\Omega \left( R \right) &\alpha =1 \\
\Omega \left( 1 \right) & \alpha \in (1, \infty)
\end{cases}.
\end{equation}

By \eqref{eq:variance_total} we have $V_1= \Omega \left( I_n (L_1 + L_2) \right),$
and $2I_n= \pi e^{R/2} = \Theta (n)$.
Therefore,
$$ V_1 = \begin{cases}
\Omega \left( n^{1 + 2(1-\alpha)}\right) & \alpha \in ( \frac{1} {2} , 1) \\
\Omega \left( n R \right) & \alpha =1 \\
\Omega \left( n \right) & \alpha \in (1, \infty)
\end{cases}.$$
As $ \Var[ \tilde{S}^{iso}_H( \tilde{P}_{\alpha} \cap D  )] \geq V_1$, we finally deduce that
\begin{equation} \label{eq:variances_hat_Siso}
 \Var[ \tilde{S}^{iso}_H( \tilde{P}_{\alpha} \cap D  )]
 = \begin{cases}
\Omega \left( n^{3-2\alpha}\right) & \alpha \in (\frac{1} {2} , 1) \\
\Omega \left( nR \right) & \alpha =1 \\
\Omega \left( n \right) & \alpha \in (1, \infty)
\end{cases}.
\end{equation}

Combining \eqref{ubound} and \eqref{eq:variances_hat_Siso}, and recalling Corollary \ref{cor1}, we thus establish the desired growth rates  for $\Var[S^{iso}( {\cal P}_{\alpha, n} )]$, completing the proof of \eqref{Variso}.

\section{Proof of Theorem \ref{extVar}}

\subsection{Proof of expectation asymptotics for $S^{ext}( {\cal P}_{\alpha, n})$ }

By Lemmas \ref{lem:variances_tilde_checked} and \ref{lem:exp_var_equiv} it suffices to compute
$\lim_{n \to \infty} n^{-1}  \mathbb{E} [\tilde{S}^{ext}_H( \tilde{P}_{\alpha} \cap D )]$.
Given  a point $p \in D([0,H])$ we have
\begin{align*}
\mu_{\alpha}  ( B(p) \cap {D}([0,y(p)])) & \leq \mu_{\alpha}  (\BallUp
\cap {D}([0,y(p)]))\\
&=  2\cdot  \OneUp\cdot \beta \int_0^{y(p)} e^{ \frac{y(p)} {2}  + \frac{y} {2} } e^{-\alpha y} dy \\
&= 2\cdot \OneUp \cdot \frac{2\beta}{2\alpha -1} e^{ \frac{y(p)} {2}}  (1- e^{( \frac{1} {2} -\alpha)y(p)}).
\end{align*}
Similarly, we have the lower bound
\begin{align*}
\mu_{\alpha}  ( B(p) \cap {D}([0,y(p)])) &  \geq \mu_{\alpha}  (\BallDown
\cap {D}([0,y(p)]))\\
&= 2\cdot \OneLow \cdot \frac{2\beta}{2\alpha -1} e^{  \frac{y(p)} {2}}  (1- e^{( \frac{1} {2}  -\alpha)y(p)}).
\end{align*}
Taking again $\eps = O(e^{-5 \log R})$, and $\gamma = 4 \beta/(2\alpha -1)$,
we then deduce that uniformly over all $p \in \R{[0,H]}$
$$
 \mu_{\alpha}  ( B(p) \cap {D}([0,y(p)])) = \gamma \cdot
e^{\frac{ y(p)} {2}} (1- e^{(\frac{1} {2}  -\alpha)y(p)}) +o(1).
$$
Therefore,
\begin{equation} \label{eq:expec_score_Pabaa_extreme}
\mathbb{E}[ \txi^{ext}(p, \tilde{\cal P}_{\alpha, D} \cup \{p\})]  \sim \exp \left( - \gamma  e^{\frac{y(p)} {2}} (1- e^{( \frac{1} {2} -\alpha)y(p)})\right)
\end{equation}
uniformly over all $p \in \R{[0,H]}$.
Hence, the Campbell-Mecke formula  \eqref{eq:Campbell-Mecke} yields
\begin{align}
n^{-1} \mathbb{E}[\tilde{S}^{ext}_H( \tilde{P}_{\alpha} \cap D )] &=  n^{-1} \beta \int_{-I_n}^{I_n}\int_0^H \mathbb{E}[
\txi^{ext} ((x,y), \tilde{\cal P}_{\alpha, D} \cup \{ (x,y)\} ) ] e^{-\alpha y} dx dy \nonumber  \\
& \sim \beta \cdot 2I_n n^{-1} \int_0^H e^{-\gamma e^{y/2}(1-e^{( \frac{1} {2} -\alpha)y}) } e^{-\alpha y}dy  \nonumber  \\
&\sim \beta   \cdot \pi e^{R/2} n^{-1} \int_0^\infty e^{-\gamma e^{y/2}(1-e^{(\frac{1} {2} -\alpha)y})} e^{-\alpha y} dy \nonumber  \\
& =2\alpha  \int_0^\infty e^{-\gamma e^{y/2}(1-e^{(\frac{1} {2} -\alpha)y})} e^{-\alpha y} dy
:= \mu  \label{eq:extreme_mu}
\end{align}
as desired.

\subsection{Proof of variance  asymptotics for $S^{ext}( {\cal P}_{\alpha, n})$ }

The determination of variance asymptotics for $S^{ext}( {\cal P}_{\alpha, n})$ is handled by extending existing stabilization methods. We show that when the constants describing the tail behavior of the stabilization radius at a point $p$ are allowed to grow exponentially fast with the height of $p$,
as at \eqref{stab-bounds} below, then one may nonetheless establish explicit variance asymptotics as $n \to \infty$, as shown in the analysis between \eqref{Sliv} and  \eqref{sigma3} below.

We first require several auxiliary lemmas.
For all $r > 0$ and $p:= (x(p), y(p)) \in \RR \times \RR^+$   we let $B(p, r)$ denote the closed Euclidean ball of radius $r$ centered at $p$.

The identity  \eqref{trunc} implies that for $y(p) \in [0, H]$ we have
$$
D(p) = \{ p' \ : \ y(p')  \leq y(p), \ |x(p') - x(p)|_\Phi < (1+ \lambda_n(p',p)) e^{\frac12 (y(p')+ y(p) - R)} \}. $$
We set $s_n:=s_n(p):=1+ \lambda_n(p',p)$, where $\lambda_n (p',p) = o(1)$. 

Put $p_0 := (0, y_0)$. We let $d_0$ be the Euclidean distance between $p_0$ and the point in $(\tilde{\cal P}_{\alpha} \cap D)  \cap D(p_0)$ which is closest to $p_0$. 
Set $d_n:= \text{diam}(D(p_0))$ and note $d_n \leq 2 s_n e^{y_0}$.
Now we put
$$
 R^\xi:= R^\xi( p_0,  (\tilde{\cal P}_{\alpha} \cap D) ) : = \begin{cases} d_0  & (\tilde{\cal P}_{\alpha} \cap D) \cap D(p_0)  \neq \{p_0 \}     \\
 d_n  &   (\tilde{\cal P}_{\alpha} \cap D) \cap D(p_0) = \{p_0\}
 \end{cases}.
 $$
 The extremality status of $p_0$ depends only on the point set  $(\tilde{\cal P}_{\alpha} \cap D)   \cap B(p_0, R^\xi )$ in the sense that points outside this set will not modify 
 $\txi^{ext}(p_0, \tilde{\cal P}_{\alpha} \cap D ) $.  In other words, 
$$
\txi^{ext}(p_0, \tilde{\cal P}_{\alpha} \cap D )  = \txi^{ext} \left(p_0, (\tilde{\cal P}_{\alpha} \cap D)   \cap B(p_0, R^\xi )\right),
$$
that is to say that $R^\xi$ is \emph{a radius of stabilization} for $\xi:= \txi^{ext}$.

Clearly, for $t \in (d_n, \infty)$, we have $\P (R^\xi(p_0,  \tilde{\cal P}_{\alpha} \cap D ) \geq  t) = 0$.  We seek to control
$\P (R^\xi(p_0, \tilde{\cal P}_{\alpha} \cap D ) > t), t \in [0, d_n]$, as a function of both $t$ and the height parameter $y_0$.
Put $c_0 := \sqrt{3} \beta(1 - e^{- 8 \alpha} )/\alpha$ and set
 $\phi(t):=  \min \{ \alpha t/4, c_0 \sqrt{t/3} \}$ for $t \in (0, \infty)$.  We assert there is a constant $c_1$ such that  for $y_0 \in [0, H]$ we have
\be \label{stab-bound}
\P(  R^\xi(p_0,  \tilde{\cal P}_{\alpha} \cap D ) \geq t ) \leq  c_1  \exp \left( {  \alpha y_0  \over 2} \right) \exp(-  \phi(t)  ), \ \ t \in [0, \infty).
\ee

\vskip.2cm

We first compute lower bounds on the $\mu_{\alpha} $ probability content of the regions
$$
R_t(p_0) :=  B(p_0,t) \cap D(p_0),  \ t \in [2 y_0, d_n].
$$

\begin{lemma} \label{lemm3} Let $y_0 \in [8, H]$. For all $n$ large we  have
$$
\mu_{\alpha} ( R_t(p_0)) \geq c_0 \sqrt{t}, \ t \in [2 y_0, d_n].
$$
\end{lemma}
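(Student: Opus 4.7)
The plan is to exhibit an explicit sub-region of $R_t(p_0) = B(p_0, t) \cap D(p_0)$ whose $\mu_\alpha$-content is easy to bound from below by $c_0 \sqrt{t}$. The starting point is the inner approximation $B^-(p_0) \cap \{0 \leq y \leq y_0\} \subseteq D(p_0)$, which follows from \eqref{eq:inclusion_mapped} together with the definition of $D(p_0)$ as the part of $B(p_0)$ at heights at most $y_0$. Taking $\epsilon = \epsilon(n) = O(R^{-5})$ as permitted by Lemma \ref{lem:relAngle_generic}(iii), this gives that $D(p_0)$ contains every $(x,y)$ with $0 \leq y \leq y_0$ and $|x - x(p_0)|_\Phi \leq (1-\epsilon) e^{(y+y_0)/2}$.

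Next, I would restrict attention to the horizontal strip $y \in [0, 8]$, which lies inside $[0, y_0]$ because $y_0 \geq 8$ by hypothesis. The key geometric observation is that for $y$ in this strip one has $(y - y_0)^2 \leq y_0^2 \leq t^2/4$, using the hypothesis $t \geq 2 y_0$. Hence the Euclidean disc $B(p_0, t)$ contains every point $(x,y)$ with $|x - x(p_0)|_\Phi \leq \sqrt{t^2 - (y - y_0)^2}$, and this quantity is at least $\sqrt{3}\,t/2$. Consequently
$$
R_t(p_0) \supseteq Q_t := \Bigl\{(x,y) : y \in [0, 8],\ |x - x(p_0)|_\Phi \leq \min\bigl(\tfrac{\sqrt{3}}{2}\, t,\ (1 - \epsilon)\, e^{(y + y_0)/2}\bigr)\Bigr\}.
$$

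The principal case — and the one that dictates the constant $c_0$ — is when the Euclidean ball is the binding constraint, i.e.\ when $\tfrac{\sqrt{3}}{2} t \leq (1 - \epsilon)\, e^{y_0/2}$. Then the minimum inside $Q_t$ equals $\sqrt{3}\, t/2$ for every $y \in [0,8]$ and a direct integration gives
$$
\mu_\alpha(Q_t) \geq \int_0^8 \sqrt{3}\, \beta\, t\, e^{-\alpha y}\, dy = \sqrt{3}\, \beta\, \frac{1 - e^{-8\alpha}}{\alpha}\, t = c_0\, t \geq c_0 \sqrt{t},
$$
where the final inequality uses $t \geq 2y_0 \geq 16 \geq 1$. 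In the complementary ``large $t$'' regime $t > \tfrac{2(1-\epsilon)}{\sqrt{3}}\, e^{y_0/2}$ (with $t$ ranging up to $d_n \leq 2 s_n e^{y_0}$), the $D$-constraint takes over at small $y$: I would split the integral over $[0,8]$ at the crossover height $y^\star := 2\log\bigl(\sqrt{3}\, t / (2(1-\epsilon))\bigr) - y_0$ and estimate each piece separately. The resulting lower bound is of order $e^{y_0/2}$, and one converts it into an $\Omega(\sqrt{t})$ bound via $\sqrt{t} \leq \sqrt{d_n} \leq \sqrt{2 s_n}\, e^{y_0/2}$.

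The main obstacle is the constant-matching in this large-$t$ regime: the bound produced by integrating the $D$-width is of the form $\Theta\bigl(e^{y_0/2}(1 - e^{-8(\alpha - 1/2)})/(\alpha - 1/2)\bigr)$ and must dominate $c_0\sqrt{t} \leq c_0\sqrt{2 s_n}\, e^{y_0/2}$. The needed slack comes from (a) the condition $y_0 \geq 8$, which creates a definite exponential gap $e^{y_0/2} \gg 1$, (b) the restriction $\alpha \in (1/2, \infty)$ ensuring integrability of $e^{(1/2 - \alpha) y}$, and (c) the asymptotics $\epsilon \to 0$ and $s_n \to 1$ for $n$ large, which allows one to absorb lower-order factors. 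Once these are combined the two regimes piece together to yield the uniform bound $\mu_\alpha(R_t(p_0)) \geq c_0 \sqrt{t}$ on the full range $t \in [2y_0, d_n]$.
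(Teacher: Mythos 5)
Your small-$t$ case is the same as the paper's: when $t \ge 2y_0$ the Euclidean ball contains $[-\sqrt{3}t/2,\sqrt{3}t/2] \times [0,y_0]$, and integrating the width $\sqrt{3}t$ against $\beta e^{-\alpha y}$ over $y\in[0,8]$ gives exactly $c_0 t \ge c_0\sqrt{t}$. The divergence is in the large-$t$ case. The paper there keeps the \emph{Euclidean} width $\sqrt{3}e^{y_0/2}$ (i.e.\ applies the small-$t$ geometry with radius reduced to $e^{y_0/2}\le t$) and integrates over the \emph{full} strip $y\in[0,y_0]$, obtaining $\sqrt{3}\beta e^{y_0/2}\frac{1-e^{-\alpha y_0}}{\alpha}\ge c_0 e^{y_0/2}$ with the same constant $c_0$. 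You instead stay inside $y\in[0,8]$ and switch to the $D$-width $2(1-\eps)e^{(y+y_0)/2}$, which yields $2(1-\eps)\beta e^{y_0/2}\frac{1-e^{-8(\alpha-1/2)}}{\alpha-1/2}$.

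This is where your argument has a genuine gap, and the ``needed slack'' you cite in (a)--(c) does not supply it. Both your lower bound and the target $c_0\sqrt{t}$ scale like $e^{y_0/2}$ in the range $t\in[e^{y_0/2},d_n]$, so the ``exponential gap'' of (a) cancels identically; (b) concerns finiteness of the integral, not the size of the constant; and (c) only removes $o(1)$ factors. Since $t$ runs up to $d_n\le 2 s_n e^{y_0}$ with $s_n\to 1$, the inequality you must verify (after $n\to\infty$) is
\[
2\,\frac{1-e^{-8(\alpha-1/2)}}{\alpha-1/2}\ \ge\ \sqrt{6}\,\frac{1-e^{-8\alpha}}{\alpha},
\]
and as $\alpha\to\infty$ the left side behaves like $2/\alpha$ while the right side behaves like $\sqrt{6}/\alpha$; since $2<\sqrt{6}$, the inequality fails for all sufficiently large $\alpha$ (numerically, already around $\alpha\approx 3$). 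Your approach therefore does not establish the lemma as stated for all $\alpha\in(1/2,\infty)$.

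Two mitigating remarks. First, the paper's own second case proves only $\mu_\alpha(R_t(p_0))\ge c_0\sqrt{t/3}$, which is what is actually used (cf.\ the definition $\phi(t):=\min\{\alpha t/4,\,c_0\sqrt{t/3}\}$ immediately afterward); the $\sqrt{t}$ in the lemma's display looks like a typo. Second, for that weaker $\sqrt{t/3}$ conclusion your box does suffice: the required comparison becomes $2\frac{1-e^{-8(\alpha-1/2)}}{\alpha-1/2}\ge\sqrt{2}\frac{1-e^{-8\alpha}}{\alpha}$, which one can check holds for all $\alpha>1/2$ (it tends to equality with a ratio $\sqrt{2}$ as $\alpha\to\infty$). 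But this is an explicit inequality that has to be verified, not a consequence of (a)--(c); the missing step in your write-up is precisely that verification, and for the literal $\sqrt{t}$ target it is false.
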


\noindent{\em Proof.} First assume $t \in [2y_0, e^{ \frac{y_0} {2} }].$
Notice that $B(p_0, t)$ meets the positive $x$-axis at points $\pm \sqrt{t^2 - y_0^2}$ which have absolute value exceeding  $\sqrt{3} t/2$ when $t \geq 2y_0$.  In other words, we have
$B(p_0, t) \supseteq [ -  \sqrt{3} t/ 2,  \sqrt{3} t/ 2 ] \times [0, y_0]$.  
We also have $D(p_0) \supseteq  [- e^{ \frac{y_0} {2} }, e^{ \frac{y_0} {2} }] \times [0, y_0]$, implying 
$R_t(p_0) \supseteq [ -  \sqrt{3} t/ 2,  \sqrt{3} t/ 2 ] \times [0, y_0]$. 
Consequently, we have
$$\
\mu_{\alpha}  ( R_t(p_0)) \geq  \sqrt{3} t \beta \int_0^{8}   e^{- \alpha y'} dy' \geq c_0 t \geq c_0 \sqrt{t}.
$$

Now  assume $t \in [ e^{ \frac{y_0} {2} }, d_n].$
Since $y_0$ exceeds $8$, we have $e^{y_0/2} \geq 2y_0$. As above, it follows that
 $$
 R_t(p_0) \supseteq \left[ - { \sqrt{3} e^{\frac{y_0} {2} }  \over 2},  { \sqrt{3} e^{\frac{y_0} {2} } \over 2} \right] \times [0, y_0].$$
Hence
$$
\mu_{\alpha} ( R_t(p_0)) \geq  \int_0^{y_0} \sqrt{3} e^{\frac{y_0} {2}} \beta e^{- \alpha y'} dy' \geq c_0 e^{\frac{y_0} {2}} > c_0 \sqrt{t/3},
$$
where the last inequality uses $t \leq d_n \leq  2s_n e^{y_0}$.
Hence, $\mu_{\alpha}  ( R_t(p_0)) \geq  c_0 \sqrt{t/3},$
as desired.   \qed

\vskip.3cm

Now we note that $R^\xi( p_0,  \tilde{\cal P}_{\alpha} \cap D ) \geq t$ iff $d_0 \geq t$ which happens if $(\tilde{\cal P}_{\alpha} \cap D) \cap D(p_0) = \{p_0\}$. 
Lemma \ref{lemm3}  shows that
for $y_0 \in [8, H ]$
$$
\P(  R^\xi( p_0,  \tilde{\cal P}_{\alpha} \cap D ) \geq t ) \leq  \exp(-\mu_{\alpha} ( R_t(p_0))) \leq    \exp(-c_0 \sqrt{t/3} ), \ \ t \in [2 y_0, d_n].
$$
For $t \in [0, 2 y_0]$ we have the trivial bound
$$
\P(  R^\xi( p_0,  \tilde{\cal P}_{\alpha} \cap D ) \geq t ) \leq  \exp \left( {  \alpha y_0  \over 2} \right)  \exp
\left( - { \alpha t \over 4}  \right).
$$
Put  $\phi(t):=  \min \{ \alpha t/4, c_0 \sqrt{t/3}\}$ for $t \in (0, \infty)$.  Summarizing the above we have shown for $y_0 \in [8, H]$ that
\be \label{stab-boundsa}
\P(  R^\xi( p_0,  \tilde{\cal P}_{\alpha} \cap D ) \geq t ) \leq   \exp \left( {  \alpha y_0  \over 2} \right) \exp(-  \phi(t)  ), \ \ t \in [0, \infty).
\ee
It remains to show that \eqref{stab-boundsa} holds for $y_0 \in [0, 8]$.
Recall  that the left-hand side of  \eqref{stab-boundsa} vanishes for $t$ in the range $t \in [d_n, \infty)$.
If $y_0 \in [0, 8]$ and  $t \in [0, d_n]$  then $R^\xi \leq e^8$. For $c$ large enough we thus have
\be \label{stab-bounds}
\P(  R^\xi( p_0, \tilde{\cal P}_{\alpha} \cap D ) \geq t ) \leq  c \exp \left( {  \alpha y_0  \over 2} \right) \exp(-  \phi(t)  ), \ \ t \in [0, \infty).
\ee
Thus we have shown \eqref{stab-bound} as desired.

\vskip.3cm

Recall that $2I_n = \pi e^{R/2}$.  Since $\tilde{\cal P}_{\alpha} \cap D$ is stationary with respect to the spatial $x$-coordinate it follows that for all $(x(p), y(p)) \in [- I_n, I_n] \times [0, H]$, we have
\be \label{general-stab-bounds}
\P(  R^\xi( ( x(p), y(p)),   \tilde{\cal P}_{\alpha} \cap D ) \geq t ) \leq  c    \exp \left( {  \alpha y(p)  \over 2} \right) \exp(-  \phi(t)  ), \ \ t \in [0, \infty).
\ee

Given the  bound \eqref{general-stab-bounds}, we now find asymptotics for $n^{-1} \Var [\tilde{S}^{ext}_H( \tilde{\cal P}_{\alpha} \cap D)]$.
In the remainder of this section we continue to abbreviate $\txi^{ext}$ by $\xi$.
In the next lemma, we will bound the covariance of $\xi$ with respect to
$(x_1, y_1), (x_2,y_2)\in \mathbb{R} \times [0, R]$, namely
\begin{align*}
& c^\xi ( (x_1, y_1), (x_2, y_2) ) \\
& = \Ex{ \xi( (x_1, y_1), (\tilde{\cal P}_{\alpha} \cap D) \cup \{(x_1,y_1),(x_2, y_2)\} ) \cdot \xi( (x_2, y_2), (\tilde{\cal P}_{\alpha} \cap D) \cup \{(x_1, y_1),(x_2,y_2\} )} \\
& \ \ \ \ - \Ex {\xi( (x_1, y_1), (\tilde{\cal P}_{\alpha} \cap D) \cup \{(x_1,y_1)\})}\cdot \Ex{\xi( (x_2, y_2), (\tilde{\cal P}_{\alpha} \cap D)  \cup \{(x_2, y_2)\} )}.
\end{align*}

\begin{lemma} There is a constant $c \in (0, \infty)$ such that for all $(x_1, y_1), (x_2, y_2) \in (- I_n, I_n] \times [0, H]$, we have
\be \label{correlationbound}
c^\xi( (x_1, y_1), (x_2, y_2) ) \leq c \left( \exp \left( {\alpha y_1 \over 2} \right) + \exp \left( {\alpha y_2 \over 2 } \right) \right) \exp \left( - \phi \left( {|x_1 - x_2| \over 3} \right)\right).
\ee
\end{lemma}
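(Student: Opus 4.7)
The plan is to prove \eqref{correlationbound} by a stabilization-plus-localization argument built on the tail bound \eqref{general-stab-bounds}. First I would set $r := |x_1 - x_2|/3$ and note that, since $p_1 := (x_1, y_1)$ and $p_2 := (x_2, y_2)$ lie at Euclidean distance at least $|x_1-x_2| = 3r$, the open Euclidean balls $B(p_1, r)$ and $B(p_2, r)$ are disjoint. This geometric fact drives the whole argument: the restrictions of $\tilde{\cal P}_{\alpha} \cap D$ to these two disjoint regions are independent Poisson processes.

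Next I would introduce the localized scores
$$\xi_i^{(r)} := \xi\bigl(p_i, ((\tilde{\cal P}_{\alpha} \cap D) \cap B(p_i,r)) \cup \{p_1, p_2\}\bigr), \quad i=1,2,$$
which by the disjointness just noted are independent. Writing $\xi_i := \xi(p_i, (\tilde{\cal P}_{\alpha} \cap D) \cup \{p_1, p_2\})$ and $A_i := \{R^\xi(p_i, (\tilde{\cal P}_{\alpha} \cap D) \cup \{p_1, p_2\}) \leq r\}$, the defining property of the stabilization radius gives $\xi_i = \xi_i^{(r)}$ on $A_i$. Since all scores take values in $\{0,1\}$, we have $|\xi_i - \xi_i^{(r)}| \leq \mathbf{1}_{A_i^c}$, and a routine triangle-inequality expansion of $\mathbb{E}[\xi_1\xi_2] - \mathbb{E}[\xi_1]\mathbb{E}[\xi_2]$ around the independent pair $(\xi_1^{(r)}, \xi_2^{(r)})$ yields
$$|c^\xi(p_1, p_2)| \leq 2 \bigl( \mathbb{P}(A_1^c) + \mathbb{P}(A_2^c) \bigr).$$
Invoking \eqref{general-stab-bounds} to bound $\mathbb{P}(A_i^c) \leq c\, e^{\alpha y_i/2} e^{-\phi(r)}$, and recalling that $r = |x_1-x_2|/3$, then delivers \eqref{correlationbound}.

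The main obstacle, and the only step requiring real care, is to justify that the tail bound \eqref{general-stab-bounds}, derived for the bare process $\tilde{\cal P}_{\alpha} \cap D$, applies equally to the augmented process $(\tilde{\cal P}_{\alpha} \cap D) \cup \{p_1, p_2\}$ appearing in the definition of $A_i$. I would handle this via a monotonicity observation: adding a point to a configuration $\mathcal{P}$ either supplies a closer witness inside the truncated ball $D(p_0)$ and strictly decreases $R^\xi(p_0, \mathcal{P})$, or falls outside $D(p_0)$ and leaves $R^\xi$ unchanged. Thus $\{R^\xi \geq t\}$ is a decreasing event in the underlying point set, and the tail bound transfers from the bare to the augmented process without loss, completing the argument.
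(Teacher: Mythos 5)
Your proof is correct and follows essentially the same route as the paper: localize both scores to the disjoint Euclidean balls of radius $r = |x_1-x_2|/3$, exploit independence of the Poisson process on disjoint regions, and control the localization error by the stabilization tail bound \eqref{general-stab-bounds}, arriving at $|c^\xi| \leq O(1)\cdot(\mathbb{P}(A_1^c)+\mathbb{P}(A_2^c))$. The one nuance you make explicit --- that the tail bound transfers to the augmented process $(\tilde{\cal P}_\alpha \cap D)\cup\{p_1,p_2\}$ because $R^\xi$ is non-increasing under point additions --- is used implicitly in the paper, which phrases its event $E$ in terms of the base-process radii and silently relies on the same monotonicity to stabilize the scores evaluated at the augmented configuration.
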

\noindent{\em Proof.}  Write $M := \max \{  R^\xi( ( x_1, y_1),  \tilde{\cal P}_{\alpha} \cap D ),  R^\xi( ( x_2, y_2),  \tilde{\cal P}_{\alpha} \cap D ) \}$.  Put $r:= |x_1 - x_2|/3$ and define $E:= \{M \leq r \}$.
 Since $\xi$ is bounded by $1$, we note that
$$
\mathbb{E}\left[ \xi( (x_1, y_1), (\tilde{\cal P}_{\alpha} \cap D)  \cup \{(x_1,y_1),(x_2, y_2)\})\cdot
\xi( (x_2, y_2),  (\tilde{\cal P}_{\alpha} \cap D)  \cup \{(x_1,y_1),(x_2, y_2)\} ) \right]
$$
differs from
$$
 \mathbb{E} \left[ \xi( (x_1, y_1),  (\tilde{\cal P}_{\alpha} \cap D) \cup \{(x_1,y_1),(x_2, y_2)\} )\cdot  \xi( (x_2, y_2),  (\tilde{\cal P}_{\alpha} \cap D) \cup \{(x_1,y_1),(x_2, y_2)\} )
\times  {\bf 1}(E)  \right]
$$
by at most $\P(E)$.

Notice that
\begin{align*}
& \mathbb{E} [ \xi( (x_1, y_1),  (\tilde{\cal P}_{\alpha} \cap D) \cup \{(x_1,y_1),(x_2, y_2)\} ) \\
& \ \ \ \ \ \ \ \cdot  \xi( (x_2, y_2),  (\tilde{\cal P}_{\alpha} \cap D) \cup \{(x_1,y_1),(x_2, y_2)\} )
\times  {\bf 1}(E)  ] \\
& = \mathbb{E} [ \xi( (x_1, y_1),  ( (\tilde{\cal P}_{\alpha} \cap D) \cup \{(x_1,y_1) \}) \cap B((x_1,y_1), r )) \\
& \ \ \ \ \ \ \  \cdot  \xi( (x_2, y_2),  ((\tilde{\cal P}_{\alpha} \cap D) \cup \{(x_2, y_2)\})  \cap B((x_2,y_2), r )) )
\times  ( 1- {\bf 1}(E^c)) ].
\end{align*}
We consequently obtain
\begin{align*}
& | \mathbb{E} [ \xi( (x_1, y_1), (\tilde{\cal P}_{\alpha} \cap D)  \cup \{(x_1,y_1),(x_2, y_2)\})  \cdot \xi( (x_2, y_2),  (\tilde{\cal P}_{\alpha} \cap D)  \cup \{(x_1,y_1),(x_2, y_2)\} ) ]  \\
& \ \ \ \ \ \ \ - \mathbb{E} [ \xi( (x_1, y_1),  ( (\tilde{\cal P}_{\alpha} \cap D) \cup \{(x_1,y_1) \}) \cap B((x_1,y_1), r )) \\
& \ \ \ \ \  \cdot  \xi( (x_2, y_2),  ((\tilde{\cal P}_{\alpha} \cap D) \cup \{(x_2, y_2)\})  \cap B((x_2,y_2), r )) ) ]|  \\
& \leq 2 \mathbb{P}(E^c).
\end{align*}
By independence we have
\begin{align*}
& | \mathbb{E} [ \xi( (x_1, y_1), (\tilde{\cal P}_{\alpha} \cap D)  \cup \{(x_1,y_1),(x_2, y_2)\})  \cdot \xi( (x_2, y_2),  (\tilde{\cal P}_{\alpha} \cap D)  \cup \{(x_1,y_1),(x_2, y_2)\} ) ]  \\
& \ \ \ \ \ \ \ - \mathbb{E} [ \xi( (x_1, y_1),  ( (\tilde{\cal P}_{\alpha} \cap D) \cup \{(x_1,y_1) \}) \cap B((x_1,y_1), r ))] \\
& \ \ \ \ \  \cdot \mathbb{E} [ \xi( (x_2, y_2),  ((\tilde{\cal P}_{\alpha} \cap D) \cup \{(x_2, y_2)\})  \cap B((x_2,y_2), r )) ) ]|  \\
& \leq 2 \mathbb{P}(E^c).
\end{align*}
Likewise
$$
\mathbb{E} [\xi( (x_1, y_1),  (\tilde{\cal P}_{\alpha} \cap D) \cup \{(x_1,y_1)\} ) ] \mathbb{E} [\xi( (x_2, y_2), ( \tilde{\cal P}_{\alpha} \cap D) \cup \{(x_2, y_2)\})]
$$
differs from
\begin{align} \label{commonterm2}
& \mathbb{E} [ \xi( (x_1, y_1),  ( (\tilde{\cal P}_{\alpha} \cap D) \cup \{(x_1,y_1) \}) \cap B((x_1,y_1), r ))]  \nonumber \\
& \ \ \ \ \ \cdot \mathbb{E} [ \xi( (x_2, y_2),  ((\tilde{\cal P}_{\alpha} \cap D) \cup \{(x_2, y_2)\})  \cap B((x_2,y_2), r )) )]
\end{align}
by at most $2 \mathbb{P}(E^c)$.  We conclude that
$$
c^\xi( (x_1, y_1), (x_2, y_2) ) \leq 4  \P \left(M \geq  \frac{ |x_1 - x_2| } {3} \right).
$$
The bound \eqref{stab-bound} completes the proof.  \qed

\vskip.5cm

Recall that $\tilde{\cal P}_{\alpha}$ is the Poisson point process  on $\mathbb{R} \times [0, \infty)$ with intensity measure $\mu_\alpha$ as at \eqref{Pnavdef}.
Next, define $c^\xi( (x_1, y_1), (x_2, y_2);   \tilde{\cal P}_{\alpha})$ analogously as in the definition of $c^\xi( (x_1, y_1), (x_2, y_2);
 \tilde{\cal P}_{\alpha} \cap D )$.  Note that $ (\tilde{\cal P}_{\alpha} \cap D)  \tod  \tilde{\cal P}_{\alpha}$ as $n \to \infty$.
The next lemma follows from stabilization methods; see for example \cite{BY}, \cite{Pe}.

\begin{lemma} We have
\be \label{correlation-convergence}
\lim_{n \to \infty} c^\xi( (x_1, y_1), (x_2, y_2); \tilde{\cal P}_{\alpha} \cap D ) = c^\xi( (x_1, y_1), (x_2, y_2);   \tilde{\cal P}_{\alpha}).
\ee
\end{lemma}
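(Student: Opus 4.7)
The plan is to mimic the stabilization argument already used to derive the covariance bound \eqref{correlationbound}. Writing out \eqref{Cov}, it suffices to prove separately that, as $n \to \infty$,
\[
\mathbb{E}\bigl[\xi((x_1,y_1), \mathcal{P}_n^*) \, \xi((x_2,y_2), \mathcal{P}_n^*)\bigr] \longrightarrow \mathbb{E}\bigl[\xi((x_1,y_1), \mathcal{P}_\infty^*)\, \xi((x_2,y_2), \mathcal{P}_\infty^*)\bigr],
\]
where $\mathcal{P}_n^* := (\tilde{\cal P}_{\alpha} \cap D) \cup \{(x_1,y_1),(x_2,y_2)\}$ and $\mathcal{P}_\infty^* := \tilde{\cal P}_{\alpha} \cup \{(x_1,y_1),(x_2,y_2)\}$, and similarly for the two singleton expectations obtained by inserting only $(x_i,y_i)$. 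I would couple the two processes in the obvious way, realizing $\tilde{\cal P}_{\alpha}$ on $\mathbb{R} \times \mathbb{R}^+$ and taking $\tilde{\cal P}_{\alpha} \cap D$ to be its restriction to $D = (-I_n, I_n] \times [0, R]$; both are Poisson with intensity $\mu_\alpha$ on $D$.

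Since $I_n \to \infty$ and $R \to \infty$, for any fixed $t > 0$ the Euclidean balls $B((x_i, y_i), t)$, $i = 1, 2$, lie inside $D$ for every $n \geq n_0(t)$, so the two processes agree on these balls. By the defining property of the stabilization radius $R^\xi$, the score $\xi((x_i,y_i), \mathcal{P})$ depends only on $\mathcal{P} \cap B((x_i,y_i), R^\xi)$. Thus on the event
\[
E_t := \bigl\{ R^\xi((x_i,y_i), \mathcal{P}) \leq t \ \text{ for } i = 1, 2 \text{ and } \mathcal{P} \in \{\mathcal{P}_n^*, \mathcal{P}_\infty^*\} \bigr\},
\]
and for $n \geq n_0(t)$, the scores $\xi((x_i,y_i), \mathcal{P}_n^*)$ and $\xi((x_i,y_i), \mathcal{P}_\infty^*)$ coincide pointwise, and similarly for the singleton variants. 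The stabilization tail estimate \eqref{general-stab-bounds} together with its analog for $\tilde{\cal P}_{\alpha}$ (which holds by a verbatim repetition of the proof of Lemma \ref{lemm3}, since only the local intensity $\mu_\alpha$ enters) yields, by a union bound,
\[
\P(E_t^c) \leq 4 c \bigl( e^{\alpha y_1/2} + e^{\alpha y_2/2} \bigr) \exp(-\phi(t)),
\]
uniformly in $n$. Since all scores are bounded by $1$, each of the four expectations entering $c^\xi$ differs from its limiting counterpart by at most $2 \P(E_t^c)$. Sending $n \to \infty$ first and then $t \to \infty$ delivers \eqref{correlation-convergence}.

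The main (and only) technical point to verify is the extension of the tail bound \eqref{general-stab-bounds} from $\tilde{\cal P}_{\alpha} \cap D$ to the Poisson process $\tilde{\cal P}_{\alpha}$ on the full half-plane and to the configurations $\mathcal{P}_n^*$, $\mathcal{P}_\infty^*$ obtained by inserting one or two fixed points. For the former, the proof of Lemma \ref{lemm3} uses only the $\mu_\alpha$-content of sets near $p_0$ and therefore carries over unchanged. For the latter, adding fixed points to $\mathcal{P}$ can only decrease the nearest-neighbor distance $d_0$ appearing in the definition of $R^\xi$, and hence only decrease $R^\xi$; consequently the same bound is valid. This completes the sketch.
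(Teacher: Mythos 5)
Your argument is correct and is exactly the stabilization argument the paper delegates to its references \cite{BY}, \cite{Pe} without writing out. The coupling (restriction of $\tilde{\cal P}_{\alpha}$ to $D$), the observation that the scores agree on the event that the stabilization radii are at most $t$ once $B((x_i,y_i),t) \subset D$, the monotonicity of $R^\xi$ under insertion of extra points, and the use of the uniform tail bound \eqref{general-stab-bounds} to send $n \to \infty$ then $t \to \infty$, are precisely the steps one would extract from those sources, so your proof supplies the missing details rather than taking a different route.
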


Now we may finally prove the asserted variance asymptotics at \eqref{Varext}.  Put
\begin{equation}  \label{sigmaformula}
\begin{split}
\sigma^2 &=  2  \alpha \int_0^{\infty} \mathbb{E} [ \xi( (0, y_1), \tilde{\cal P}_{\alpha})]   e^{- \alpha y_1} dy_1\\
& \ \ \ \ \ \ + \  2\alpha \beta  \int_0^{\infty} \int_{- \infty}^{ \infty } \int_0^{\infty} c^\xi( (0, y_1), (z, y_2);  \tilde{\cal P}_{\alpha}   ) e^{- \alpha y_2}  dy_2 dz   e^{- \alpha y_1}  dy_1.
\end{split}
\end{equation}
By \eqref{eq:var_ext_equiv}, it is enough to show that
\be
\lim_{n \to \infty} \frac{ \Var [\tilde{S}^{ext}_H( \tilde{\cal P}_{\alpha} \cap D )] } {n} = \sigma^2.
\ee

\vskip.3cm

\noindent{ \em Proof of \eqref{Varext}. }  We have by~\eqref{eq:variance}
\begin{align}
& \frac{1}{n} \Var [\tilde{S}^{ext}_H( \tilde{\cal P}_{\alpha} \cap D )] \nonumber \\
&  =  \frac{\beta} {n} \int_{ - I_n  }^{ I_n } \int_0^{H}  \mathbb{E} [\xi( (x_1, y_1), (\tilde{\cal P}_{\alpha} \cap D)\cup
 \{(x_1,y_1)\} )^2] e^{- \alpha y_1} dy_1 dx_1 \nonumber \\
&  \hskip.5cm + \frac{ \beta^2}{n} \int_{- I_n}^{I_n} \int_0^{H} \int_{- I_n}^{I_n} \int_0^{H} c^\xi( (x_1, y_1), (x_2, y_2) )
e^{- \alpha y_2}  dy_2 dx_2   e^{- \alpha y_1}  dy_1 dx_1.  \label{Sliv}
\end{align}

The first integral in \eqref{Sliv} reduces to $\beta \int_0^{H}  \mathbb{E} [(\xi( (0, y_1), \tilde{\cal P}_{\alpha} \cap D )^2] e^{- \alpha y_1} dy_1$ by translation invariance of $\xi$ in the spatial $x$ coordinate. The stabilization of $\xi$ shows for all $y_1$  that
$$
 \lim_{n \to \infty} \mathbb{E}[ \xi( (0, y_1), \tilde{\cal P}_{\alpha} \cap D )]  = \mathbb{E} [ \xi( (0, y_1), \tilde{\cal P}_{\alpha})].
 $$
By the dominated convergence theorem and using $2I_n \beta/n = 2 \alpha $ and $\xi^2 = \xi$,  we obtain
$$
\lim_{n \to \infty} \frac{\beta}{n} \int_{- I_n}^{I_n} \int_0^{H} \mathbb{E} [ \xi( (x_1, y_1), \tilde{\cal P}_{\alpha} \cap D  )^2] e^{- \alpha y_1} dy_1 dx_1
$$
\be \label{sigma1}
=  2  \alpha \int_0^{\infty} \mathbb{E} [ \xi( (0, y_1), \tilde{\cal P}_{\alpha})]  e^{- \alpha y_1} dy_1.
\ee
Now we turn to the second integral in \eqref{Sliv}.  By translation invariance in the spatial coordinate we have
$$
 \frac{ \beta^2}{n} \int_{- I_n}^{I_n} \int_0^{H} \int_{- I_n}^{I_n}\int_0^{H} c^\xi( (x_1, y_1), (x_2, y_2); \tilde{\cal P}_{\alpha} \cap D ) e^{- \alpha y_2}  dy_2 dx_2   e^{- \alpha y_1}  dy_1 dx_1
$$
$$
=  \frac{ \beta^2}{n} \int_{- I_n}^{I_n} \int_0^{H} \int_{- I_n}^{I_n} \int_0^{H} c^\xi( (0, y_1), (x_2 - x_1, y_2); \tilde{\cal P}_{\alpha} \cap D ) e^{- \alpha y_2}  dy_2 dx_2   e^{- \alpha y_1}  dy_1 dx_1
$$
$$
=  \frac{ \beta^2}{n} \int_{- I_n}^{I_n} \int_0^{H} \int_{- I_n - x_1}^{I_n - x_1} \int_0^{H} c^\xi( (0, y_1), (z, y_2); \tilde{\cal P}_{\alpha} \cap D )
e^{- \alpha y_2}  dy_2 dz   e^{- \alpha y_1}  dy_1 dx_1.
$$
Let $v:=  \beta x_1/ \alpha n$, $dv:= \beta/ (\alpha n)  dx_1$.  Then
since $I_n =  \alpha n/\beta$ the above becomes
$$
=   \alpha \beta \int_{ - 1 }^{ 1 } \int_0^{H} \int_{ -I_n(1 - v) }^{ I_n(1 - v) }\int_0^{H} c^\xi( (0, y_1), (z, y_2); \tilde{\cal P}_{\alpha} \cap D ) e^{- \alpha y_2}  dy_2 dz   e^{- \alpha y_1}
dy_1 dv.
$$
For every $v \in [-1, 1]$ we have by \eqref{correlationbound} that $c^\xi( (0, y_1), (z, y_2); \tilde{\cal P}_{\alpha} \cap D ) e^{- \alpha y_2}e^{- \alpha y_1}$ is dominated by an integrable function of $(y_1, y_2, z)$.  It follows by the dominated convergence theorem that for every $v \in [-1, 1]$ we have
$$
\lim_{n \to \infty} \int_0^{H} \int_{ -I_n(1 - v) }^{ I_n(1 - v) } \int_0^{H} c^\xi( (0, y_1), (z, y_2); \tilde{\cal P}_{\alpha} \cap D
 ) e^{- \alpha y_2}  dy_2 dz   e^{- \alpha y_1}  dy_1
$$
\be 
\stackrel{\eqref{correlation-convergence}}{=}  \int_0^{\infty} \int_{- \infty}^{ \infty } \int_0^{\infty} c^\xi( (0, y_1), (z, y_2);  \tilde{\cal P}_{\alpha}   ) e^{- \alpha y_2}  dy_2 dz   e^{- \alpha y_1}  dy_1. \nonumber
\ee
The second integral in \eqref{Sliv} thus converges to
\be \label{sigma3}
2\alpha \beta    \int_0^{\infty} \int_{- \infty}^{ \infty } \int_0^{\infty} c^\xi( (0, y_1), (z, y_2);  \tilde{\cal P}_{\alpha}   ) e^{- \alpha y_2}  dy_2 dz   e^{- \alpha y_1}  dy_1.
\ee

Notice that  $\sigma^2$ is the sum of  \eqref{sigma1} and \eqref{sigma3}.
This completes the proof of  Theorem \ref{extVar}.   \qed

\section{Proof of Theorem \ref{mainCLT} }

To prove \eqref{CLTiso} and \eqref{CLText}, we first assert that it suffices to prove central limit theorems for the random variables
$\tilde{S}^{ext}_H(\tilde{\cal P}_{\alpha} \cap D)$ and $\tilde{S}^{iso}_H(\tilde{\cal P}_{\alpha} \cap D)$, defined at  \eqref{eq:sum_score_R^2} and \eqref{eq:sum_score_R^2_extr}, respectively.
We prove this assertion for $\tilde{S}_{n}:=  \tilde{S}^{iso}_H(\tilde{\cal P}_{\alpha} \cap D)$ as the proof for $\tilde{S}^{ext}_H(\tilde{\cal P}_{\alpha} \cap D)$ is identical.

Set  $S_n$ to be  $S^{iso}({\cal P}_{\alpha, n} ) :=  \tilde{S}^{iso}( \tilde{\cal P}_{\alpha, n})$. Recall that $S_n$ is determined by the Poisson process
$\tilde{\cal P}_{\alpha, n}$ on $\Rcal$ defined at \eqref{Pnav} whereas $\tilde{S}_{n}$ is determined
by $\tilde{\cal P}_{\alpha} \cap D$ defined at \eqref{eq:sum_score_R^2}. By Lemma \ref{lem:dist},
the intensities of these two processes differ by $\epsilon_n =  O(n^{ - 2 \alpha})$.
We can couple these two processes using a sprinkling argument.
Let $\hat{\cal P}$ be the Poisson process on $D$ with intensity equal to
$\lambda(x,y) :=\min \{\beta e^{-\alpha y}, \beta e^{-\alpha y} + \epsilon_n\}$  at $(x,y) \in D$ - in other words the minimum of the intensities of $\tilde{\cal P}_{\alpha} \cap D$ and $\tilde{\cal P}_{\alpha, n}$.
Now, we define two other independent processes on $D$:
$\hat{\cal P}_1$ of intensity $\beta e^{-\alpha y} - \lambda (x,y)$ at $(x,y)$ and
$\hat{\cal P}_2$ of intensity $\beta e^{-\alpha y} + \epsilon_n - \lambda (x,y)$ at $(x,y)$.
The union of $\hat{\cal P}$ and $\hat{\cal P}_1$ is distributed as $\tilde{\cal P}_{\alpha} \cap D$, whereas the
union of $\hat{\cal P}$ and $\hat{\cal P}_2$ is distributed as $\tilde{\cal P}_{\alpha, n}$.
We will use the symbols $\tilde{\cal P}_{\alpha} \cap D$ and  $\tilde{\cal P}_{\alpha, n}$ to denote the copies of these
processes in the coupling space.
For each $n$, we may define the $nth$ coupling space to be the product of the spaces on which $\hat{\cal P}$, $\hat{\cal P}_1$, and $\hat{\cal P}_2$ are all defined.
Let $\widehat{\mathbb{P}}_n$ denote the product probability measure on the coupling space.

Thus, for any $\alpha \in (1/2, \infty)$
$$
\widehat{\mathbb{P}}_n\left( \tilde{\cal P}_{\alpha, n} \not =  \tilde{\cal P}_{\alpha} \cap D \right) = \widehat{\mathbb{P}}_n\left( \hat{\cal P}_1 \cup \hat{\cal P}_2 \not = \emptyset\right)= O(1) \cdot R \cdot n^{1-2\alpha} =o(1).
$$
This implies that on the coupling space we have
$\tilde{\cal P}_{\alpha, n} = \tilde{\cal P}_{\alpha} \cap D$ with probability $\to 1$ as $n\to \infty$.
Also, the coupling will allow us to assume that $S_n$ and $\tilde{S}_{n} $ are defined on the same probability space.

Furthermore, by Lemmas~\ref{lem:variances_tilde_checked} and ~\ref{lem:exp_var_equiv}
we have
$$
|\mathbb{E} {S_n} -  \mathbb{E} {\tilde{S}_{n} }| = o(1) \ \mbox{and} \ \left| \Var S_n -  \Var \tilde{S}_{n}  \right| = o(n).
$$
In particular, the former implies that $\mathbb{P}(S_n \neq \tilde{S}_{n} ) = o(1)$.  Henceforth, if $X_n, n \geq 1$, is a sequence of random variables with $X_n$ defined on the $nth$ coupling space, then by $X_n \toP 0$ we mean that for all $\epsilon > 0$ we have $\widehat{\mathbb{P}}_n ( |X_n| \geq \epsilon ) \to 0$ as $n \to \infty$.

Thus we have $|S_n - \mathbb{E} S_n  - (\tilde{S}_{n}  - \mathbb{E} \tilde{S}_{n} ) | \toP 0$ as $n \to \infty$, whence
$$
\left| \frac{ S_n - \mathbb{E} S_n } {\sqrt{\Var S_n}} -  \frac{ \tilde{S}_{n}   - \mathbb{E} \tilde{S}_{n}  }  { \sqrt{\Var S_n} } \right| \toP 0
$$
as well. If $X_n, n \geq 1$, and $Y_n, n \geq 1,$ are sequences of random variables with $|X_n - Y_n| \toP 0$, if $\sup_n \mathbb{E}  |Y_n| < \infty$, and if $\alpha_n, n \geq 1$, is a sequence of scalars with $\lim_{n \to \infty} \alpha_n = 1$, then $|X_n - \alpha_n Y_n | \toP 0$.
Since $\lim_{n \to \infty}  {\sqrt{\Var S_n}}/ \sqrt{ \Var \tilde{S}_n } = 1$ it follows that as $n \to \infty$ we have
$$
\left| \frac{ S_n - \mathbb{E} S_n } {\sqrt{\Var S_n}} -  \frac{ \tilde{S}_{n}  - \mathbb{E} \tilde{S}_{n}  }  {\sqrt{ \Var \tilde{S}_{n}  }} \right| \toP 0.
$$
Thus the asymptotic normality for $ (\tilde{S}_{n}  - \mathbb{E} \tilde{S}_{n})/  \sqrt{ \Var \tilde{S}_{n}  }$ implies the
asymptotic normality of  $(S_n - \mathbb{E}  S_n) / {\sqrt{\Var S_n}}$, i.e., we have
as $n\to \infty$
$$
\Pro {\frac{ S_n - \mathbb{E} S_n } {\sqrt{\Var S_n}} \leq x } \to \Phi (x) := \frac{1}{\sqrt{2\pi}} \int_{-\infty}^x e^{-u^2/2}du.
$$

In the following sub-sections, we will show that $\tilde{S}^{iso}_H(\tilde{\cal P}_{\alpha} \cap D)$  and $\tilde{S}^{ext}_H(\tilde{\cal P}_{\alpha} \cap D)$  satisfy a central limit theorem, the former for all $\alpha \in (1, \infty)$ and
the latter for all $\alpha \in (1/2, \infty)$. These imply  ~\eqref{CLTiso}  and ~\eqref{CLText}.
On the other hand, in the final sub-section, we show that
$\tilde{S}^{iso}_H(\tilde{\cal P}_{\alpha} \cap D)$ does not satisfy a central limit theorem for $\alpha \in (1/2, 1)$.
The above argument implies that $S^{iso}(  {\cal P}_{\alpha, n})$ also does not satisfy a central limit theorem in the
same range of $\alpha$.

\subsection{The central limit theorem for $\tilde{S}^{ext}_H(\tilde{\cal P}_{\alpha} \cap D)$}

Consider the ball $B(p_2)$ centered at a point $p_2 \in \R{[0,H]}$.  We compute the maximum $x$-distance between $p_2$ and a
 generic point $p$  in the intersection of $B(p_2) \cap \R{[0,H]}$.  This tells us the maximum $x$-distance $x_{\max}:= x_{\max}^{ext}$ of the set given by the intersection of $B(p_2)$ with $\R{[0,H]}$.
Since both $p_2$ and $p$ have heights at most $H=4 \log R$, the inclusion
$B(p_2) \subseteq B^+(p_2)$ at ~\eqref{eq:inclusion_mapped} implies that
$$
x_{\max} = O(1) \cdot  R^4.
$$

We define a dependency graph $G_n:= G_n^{ext}:= ({\cal V}_n, {\cal E}_n)$ as follows.
Firstly, we partition the interval $(-I_n,I_n]$ into $\Theta ( n/R )$ consecutive intervals of equal length,
which we enumerate $J_1,\ldots, J_{[2 I_n/R]}$. For each $i=1,\ldots, [2I_n/R]$,
we set $C_i := J_i \times [0,R]$. The collection of axis-parallel rectangles $\{ C_i\}_{i=1,\ldots, [2I_n/R]}$ partitions $\Rcal$.
The vertex set ${\cal V}_n$ consists of the rectangles $C_1,...,C_{ [2I_n/R]}$.  We put an edge $(C_i, C_j)$   between any two rectangles  whenever  $C_i$ and $C_j$ are separated by a rectangle having $x$-distance at most
$2 x_{\max}$.  Let ${\cal E}_n$ be the collection of all such edges. Put for all $i = 1,..., [2I_n/R]$
$$
Z_i := Z_{C_i}^{ext} := \sum_{ p \in  \tPH \cap C_i} \txi^{ext} (p, \tP).
$$
By the definition of $x_{\max}$, if $\mathcal{C}_1$ and $\mathcal{C}_2$ are disjoint collections of rectangles in ${\cal V}_n$ such that no edge in ${\cal E}_n$ has one endpoint in
$\mathcal{C}_1$ and the other endpoint in $\mathcal{C}_2$, then the random variables
$\{ Z_{C_i}, C_i \in \mathcal{C}_1 \}$ and $\{ Z_{C_i}, C_i \in \mathcal{C}_2 \}$ are independent.
Note that a rectangle $C$ having $x$-side equal to $x_{\max}$ will have non-empty intersection with at most
\begin{equation} \label{conebound}
\frac{ x_{\max}} {2I_n/ [2I_n/R] } =
 O(1) \cdot R^3
\end{equation}
rectangles from the collection $C_1,...,C_{ [2I_n/R] }$. %

Thus $G_n^{ext}:= ({\cal V}_n, {\cal E}_n)$ is a dependency graph for $Z_i, i = 1,..., [2I_n/R].$
Now note that
$$
\tilde{S}^{ext}_H(\tilde{\cal P}_{\alpha} \cap D) := \sum_{i = 1}^{[2I_n/R]} Z_i^{ext}.
$$

Furthermore,
\begin{equation} \label{eq:sector}
\mathbb{E} {|C_1 \cap (\tilde{\cal P}_{\alpha} \cap D) |} = \beta \cdot 2I_n \cdot [ \frac{2I_n} {R} ]^{-1} \int_0^{R} e^{-\alpha y}dy
=O(1) \cdot  R.
 \end{equation}
Standard tail estimates for Poisson random variables give $\Pro{|C_1 \cap (\tilde{\cal P}_{\alpha} \cap D) |>R^2} \leq e^{-R^2}$, for $n$ sufficiently large.
So
\begin{align}  \label{CLT3}    \lim_{n \to \infty} \P(  {\rm{card}}( (\tilde{\cal P}_{\alpha} \cap D) \cap C_i)  \leq  R^2,\ 1 \leq i \leq [  \frac{2I_n} {R}] )
 & = 1 - O\left(\frac{n}{R} \cdot e^{-R^2/2}\right) \nonumber \\
 & =1 -o(n^{-15}).
\end{align}

Define
$$
A_n := \{  {\rm{card}}((\tilde{\cal P}_{\alpha} \cap D) \cap C_i)  \leq R^2 \ \text{for \ all} \ 1 \leq i \leq [ \frac{2I_n} {R} ]\}.
$$

The maximal degree $D_n$ of the dependency graph $G_n^{ext}$ satisfies $D_n = O(R^3).$  We also set $B_n := \max_{1 \leq i \leq [2I_n /R]} Z_{C_i}^{ext} \leq 2R$, $V_n := {\rm{card}}({\cal V}_n) = [2I_n/R]$.
Set $\sigma_n^2 := \Var [\tilde{S}^{ext}_H(\tilde{\cal P}_{\alpha} \cap D) |A_n]$.
H\"older's inequality gives
\begin{equation*}
\begin{split}
\Var [\tilde{S}^{ext}_H(\tilde{\cal P}_{\alpha} \cap D) {\bf 1}(A_n^c)]& \leq \mathbb{E} [( \tilde{S}^{ext}_H((\tilde{\cal P}_{\alpha} \cap D)) )^2 {\bf 1}(A_n^c)]
 \leq ( \mathbb{E} [ |\tilde{S}^{ext}_H(\tilde{\cal P}_{\alpha} \cap D)|^3] )^{2/3} \P (A_n^c)^{1/3} \\
& \leq n^2 \cdot n^{-5} = n^{-3}
\end{split}
\end{equation*}
and
$$
\mathbb{E} [ \tilde{S}^{ext}_H(\tilde{\cal P}_{\alpha} \cap D) {\bf 1}(A_n^c)] \leq  \mathbb{E} [ |\tilde{S}^{ext}_H(\tilde{\cal P}_{\alpha} \cap D
)|^2]^{1/2} \P (A_n^c)^{1/2} \leq n \cdot
n^{-15/2} = o(1).
$$
We thus conclude that
\begin{equation} \label{eq:var_exp_equiv}
 |\Var [\tilde{S}^{ext}_H(\tilde{\cal P}_{\alpha} \cap D)] - \Var [\tilde{S}^{ext}_H(\tilde{\cal P}_{\alpha} \cap D) | A_n] | = o(1)
\end{equation}
and
\begin{equation} \label{eq:varequiv}
\left|\mathbb{E} [{\tilde{S}^{ext}_H(\tilde{\cal P}_{\alpha} \cap D)}] - \mathbb{E} [\tilde{S}^{ext}_H((\tilde{\cal P}_{\alpha} \cap D)) | A_n] \right| = o(1).
\end{equation}

We have shown that $\Var [\tilde{S}^{ext}_H(\tilde{\cal P}_{\alpha} \cap D)] = \Theta (n)$ and thus also $\Var [\tilde{S}^{ext}_H(\tilde{\cal P}_{\alpha} \cap D) | A_n] = \sigma_n^2 = \Theta (n)$.
The Baldi-Rinott central limit theorem for dependency graphs \cite{BR}  gives
\begin{align} \label{eq:Baldi-Rinott}
& \sup_{x\in \mathbb{R}} \left| \P \left( \frac{ \tilde{S}^{ext}_H(\tilde{\cal P}_{\alpha} \cap D) - \mathbb{E} [ \tilde{S}^{ext}_H(\tilde{\cal P}_{\alpha} \cap D) | A_n ]  } { \sqrt { \Var [ \tilde{S}^{ext}_H(\tilde{\cal P}_{\alpha} \cap D) |A_n]}
 }  \leq x \ | A_n \right)
- \Phi(x) \right|   \\
& \leq 32(1 + \sqrt{6} )    \left(  \frac{D_n^2 B_n^3 V_n } { \sigma_n^3} \right)^{1/2}. \nonumber
\end{align}
Since $\sigma_n = \Theta (n^{1/2})$, we have $D_n^2 B_n^3 V_n/ \sigma_n^3 = o(1)$.
This shows a central limit theorem for $\tilde{S}^{ext}_H(\tilde{\cal P}_{\alpha} \cap D)$ conditional on $A_n$.

To deduce a central limit theorem for $\tilde{S}^{ext}_H(\tilde{\cal P}_{\alpha} \cap D)$, we write
\begin{align*}
& \Pro{\frac{ \tilde{S}^{ext}_H(\tilde{\cal P}_{\alpha} \cap D) - \mathbb{E} [ \tilde{S}^{ext}(\tilde{\cal P}_{\alpha} \cap D) ] } { \sqrt {\Var [\tilde{S}^{ext}_H(\tilde{\cal P}_{\alpha} \cap D)] } } \leq x} \\
& = \Pro{\frac{ \tilde{S}^{ext}_H(\tilde{\cal P}_{\alpha} \cap D) - \Ex {\tilde{S}^{ext}_H(\tilde{\cal P}_{\alpha} \cap D)}
 } {\sqrt {\Var [ \tilde{S}^{ext}_H(\tilde{\cal P}_{\alpha} \cap D)] }} \leq x | A_n } + o(1)\\
& \stackrel{\eqref{eq:var_exp_equiv}}{=} \Pro{ \frac{ \tilde{S}^{ext}_H(\tilde{\cal P}_{\alpha} \cap D) - \Ex{\tilde{S}^{ext}_H(\tilde{\cal P}_{\alpha} \cap D) |A_n} } { \sqrt {\Var [\tilde{S}^{ext}_H(\tilde{\cal P}_{\alpha} \cap D)|A_n] } } \leq x + o(1) | A_n } + o(1).
\end{align*}
Since $ \tilde{S}^{ext}_H(\tilde{\cal P}_{\alpha} \cap D)$ conditional on $A_n$ satisfies a central limit theorem by \eqref{eq:Baldi-Rinott}, the
probability on the right-hand side converges to $\Phi (x)$.
\qed

\subsection{The central limit theorem for $\tilde{S}^{iso}_H(\tilde{\cal P}_{\alpha} \cap D)$: the regime $\alpha \in (1, \infty)$ } \label{sec:CLTiso}
The above approach turns out to be not strong enough for showing the asymptotic normality for the
number of isolated vertices.
For a certain range of $\alpha$ a dependency graph defined as above
has high maximum degree making the bounds  \eqref{eq:Baldi-Rinott} of little use.
We will instead prove a central limit theorem for $\tilde{S}^{iso}_H(\tilde{\cal P}_{\alpha} \cap D)$
using a  Poincar\'e-type inequality for Poisson functionals
due to Last, Peccati and Schulte~\cite{ar:LPS16}.

Let $\cP$ denote a Poisson point process on a space $\Space$ having intensity
measure $\lambda$. Let $F$ denote a functional on locally finite point sets in $\Space$.
Recall that for a point $p \in \Space$ we defined the first order linear operator
$\nabla_p F:= F(\cP \cup \{ p\}) - F(\cP)$. Here, we will also use  the \emph{second order operator}
$\nabla^2_{p_1,p_2} F: = F(\cP \cup \{ p_1,p_2\})  - F(\cP \cup \{ p_1\}) -F(\cP \cup \{ p_2\})   + F(\cP)$.
The functional $F$ belongs to the \emph{domain of} $\nabla$ if
$$ \mathbb{E} [ F(\cP)^2] < \infty \ \mbox{and} \ \mathbb{E} \int_\Space (\nabla_p F(\cP))^2 \lambda (dp) <\infty.$$
Theorem 1.1 of  ~\cite{ar:LPS16} uses these differential
operators to approximate the normalised version of $F$ by the standard
normal $N$.  For two real-valued random variables $X$ and $Y$, let $d_W (X,Y)$ denote the Wasserstein distance
between the measures on $\mathbb{R}$ induced by $X$ and $Y$.

\begin{theorem} \label{thm:normal_approx}
Let $F$ be a functional defined on locally finite collections of points in $\Space$.  Assume $F$ belongs to
the domain of $\nabla$ and satisfies $\mathbb{E}{F}=0$ and $\Var F =1$.
If $N$ is a standard normally distributed random variable, then
$$
d_{W} \left( F, N\right) \leq  \gamma_1+\gamma_2+\gamma_3,
$$
where
\begin{equation*}
\begin{split}
\gamma_1 &:= 4 \left[ \int_{ \Space^3}   \left( \Ex{(\nabla_{p_2}F)^2 (\nabla_{p_3}F)^2}\right)^{1/2}
\left( \Ex{(\nabla_{p_1,p_2}^2 F)^2 (\nabla_{p_1,p_3}^2F)^2}\right)^{1/2}
\lambda^3 (d (p_1,p_2,p_3))  \right]^{1/2},  \\
\gamma_2 &:= \left[  \int_{ \Space^3}
\Ex{(\nabla_{p_1,p_3}^2 F)^2 (\nabla_{p_2,p_3}^2F)^2}
\lambda^3 (d (p_1,p_2,p_3))  \right]^{1/2}, \\
\gamma_3 & := \int_{ \Space} \mathbb{E}{|\nabla_p F|^3} \lambda(dp).
\end{split}
\end{equation*}
\end{theorem}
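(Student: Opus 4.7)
The plan is to combine Stein's method for the Wasserstein distance with Malliavin calculus on the configuration space of $\cP$, which is the standard route (cf.\ Peccati, Sol\'e, Taqqu and Utzet) for bounds of this type. For any $1$-Lipschitz test function $h$, let $f_h$ solve Stein's equation $f_h'(x) - x f_h(x) = h(x) - \mathbb{E}[h(N)]$; classical estimates give $\|f_h'\|_\infty \leq \sqrt{\pi/2}$ and $\|f_h''\|_\infty \leq 2$. Since
\[ d_W(F,N) = \sup_{h \in \mathrm{Lip}_1} \bigl|\mathbb{E}[F f_h(F) - f_h'(F)]\bigr|, \]
it suffices to bound the right-hand side by $\gamma_1+\gamma_2+\gamma_3$ uniformly in $h$.

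Next I would set up Poisson integration by parts. The difference operator $\nabla_p$ coincides with the Malliavin derivative on Poisson space; its Skorokhod adjoint $\delta$ and the Ornstein--Uhlenbeck generator $L = -\delta\nabla$ satisfy $F = -\delta\nabla L^{-1} F$ for any centered square-integrable $F$. The duality $\mathbb{E}[\delta(u) G] = \mathbb{E}\int \nabla_p G\cdot u_p\, \lambda(dp)$, applied with $u = -\nabla L^{-1} F$ and $G = f_h(F)$, yields
\[ \mathbb{E}[F f_h(F)] = -\mathbb{E}\int \nabla_p f_h(F) \cdot \nabla_p L^{-1} F \, \lambda(dp). \]
Since $\nabla_p$ acts by adding a single point, one has the discrete Taylor identity $\nabla_p f_h(F) = f_h'(F)\,\nabla_p F + R_p$ with $|R_p| \leq \tfrac{1}{2}\|f_h''\|_\infty (\nabla_p F)^2$. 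Combined with the variance identity $\mathbb{E}\int \nabla_p F\cdot(-\nabla_p L^{-1}F)\lambda(dp) = \Var F = 1$, this decomposes
\[ \mathbb{E}[F f_h(F) - f_h'(F)] = \mathbb{E}\bigl[f_h'(F)(T - 1)\bigr] - \mathbb{E}\int R_p\, \nabla_p L^{-1} F\, \lambda(dp), \]
where $T := -\int \nabla_p F\cdot \nabla_p L^{-1} F \, \lambda(dp)$ satisfies $\mathbb{E} T = 1$.

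The centered term is bounded by $\|f_h'\|_\infty(\Var T)^{1/2}$ via Cauchy--Schwarz, so I would apply the Poisson Poincar\'e inequality \eqref{eq:Poincare} once more, this time to $T$ itself: $\Var T \leq \mathbb{E}\int(\nabla_{p_1} T)^2 \lambda(dp_1)$. A product-rule expansion of $\nabla_{p_1}$ under the defining integral of $T$ introduces the second-order difference $\nabla^2_{p_1,p}$ and splits the integrand into two summands. Squaring, expanding the resulting double integral and applying Cauchy--Schwarz over the remaining $\lambda$-integrations, together with the contractive estimate $\mathbb{E}\int(\nabla_p L^{-1} F)^2 \lambda(dp) \leq \mathbb{E}\int(\nabla_p F)^2 \lambda(dp)$ used to replace $\nabla L^{-1} F$ by $\nabla F$ at each stage, reproduces exactly the integrands that define $\gamma_1$ and $\gamma_2$. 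The remainder term is handled by pulling out $\|f_h''\|_\infty$ and dominating $\mathbb{E}\int(\nabla_p F)^2 |\nabla_p L^{-1}F|\,\lambda(dp)$ by $\mathbb{E}\int|\nabla_p F|^3\lambda(dp)$ through a further Cauchy--Schwarz, which delivers $\gamma_3$.

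The main obstacle is the passage from $\nabla_p L^{-1} F$ to $\nabla_p F$: this is not a pointwise comparison and must be routed through the Wiener--Poisson chaos decomposition of $F$, on which $L^{-1}$ acts diagonally by the inverse of the chaos order while $\nabla$ lowers that order by one. The key technical input is a Mehler-type formula expressing $-\nabla_p L^{-1} F$ as a weighted average of $\nabla_p F$ evaluated on independent thinnings of $\cP$; it is this formula that yields both the $L^2$ contractive estimate used in the derivation of $\gamma_1$ and $\gamma_2$ and the pointwise-in-$L^p$ control needed to pass from the Taylor remainder to $\gamma_3$. Once this ingredient is in place, everything else is careful bookkeeping of Cauchy--Schwarz.
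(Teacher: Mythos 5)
The paper does not prove this theorem: it is quoted verbatim as Theorem~1.1 of Last, Peccati and Schulte~\cite{ar:LPS16}, so there is no ``paper's own proof'' to compare against. What you have done is reconstruct, at a sketch level, the argument of that reference, and your reconstruction is structurally accurate: the Stein--Malliavin pairing $\mathbb{E}[Ff_h(F)]=-\mathbb{E}\int\nabla_p f_h(F)\cdot\nabla_p L^{-1}F\,\lambda(dp)$, the discrete Taylor step producing $f_h'(F)\,T$ plus a quadratic remainder, the Poincar\'e bound on $\operatorname{Var}T$ generating the second-order differences behind $\gamma_1$ and $\gamma_2$, and the identification of the Mehler representation of $-\nabla L^{-1}$ as the key contractive device --- all of this is exactly what drives the proof in \cite{ar:LPS16}. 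You also correctly flag that a bare $L^2$-contraction of $\nabla L^{-1}$ (which is spectral, not Mehler-specific) is not enough: the Mehler formula is needed precisely to get the pointwise/\,$L^p$ control that closes the $\gamma_3$ estimate and the cross-terms in $\gamma_1,\gamma_2$.

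Two small cautions. First, the constant: the standard Stein bound is $\|f_h'\|_\infty\leq\sqrt{2/\pi}$, not $\sqrt{\pi/2}$; this does not affect the structure but would affect a constant-chase. Second, the sentence ``reproduces exactly the integrands that define $\gamma_1$ and $\gamma_2$'' elides the genuinely delicate part: the asymmetric roles of $\nabla F$ versus $\nabla L^{-1}F$ in $T$, combined with the Mehler averaging, have to be balanced by a specific choice of Cauchy--Schwarz pairings to land on the stated $\gamma_1$ (with its factor $4$ and mixed structure $\mathbb{E}[(\nabla_{p_2}F)^2(\nabla_{p_3}F)^2]^{1/2}\cdot\mathbb{E}[(\nabla^2_{p_1,p_2}F)^2(\nabla^2_{p_1,p_3}F)^2]^{1/2}$) and $\gamma_2$. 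Your sketch signals awareness of this (``careful bookkeeping''), but the specific decomposition is where the real work of \cite{ar:LPS16} lies, and a reader cannot verify the final constants from the sketch alone. As a blind reconstruction of the cited proof's strategy, however, it is correct and identifies the right key lemma.
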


We will apply Theorem \ref{thm:normal_approx} on the conditional space of
the event 
\begin{equation} \label{En}
E_n :=   (\tilde{\cal P}_{\alpha} \cap D) \cap \R{[ \frac{R}{2},R]}  = \emptyset. 
\end{equation}
A calculation similar to the one in~\eqref{eq:sector} shows that  for any $\alpha \in (1, \infty)
$ we have $\P(E_n) = 1 - O(n^{1-\alpha}).$

We shall apply Theorem~\ref{thm:normal_approx} setting $\lambda$ to be $\mu_\alpha$ and letting
$$\Space:=  {D}([0, \frac{R}{2}]),  \
\cP := \tilde{\cal P}_{\alpha} \cap D  \ \ \mbox{and} \
F := \frac{\tilde{S}^{iso}_H(\tilde{\cal P}_{\alpha} \cap D)  - \mathbb{E} [ \tilde{S}^{iso}_H(\tilde{\cal P}_{\alpha} \cap D) | E_n] }
{\sqrt{\Var [\tilde{S}^{iso}(\tPH) | E_n]}}.$$
These ensure that on $E_n$, one has $\mathbb{E}[F| E_n] =0$ and $\Var  [F | E_n] = 1$.
We will verify that $F$ is on the domain of $\nabla$ later on, using the estimate on
$\gamma_3$.  We will only check the second condition; the requirement that
 $\mathbb{E}{F( \tilde{\cal P}_{\alpha} \cap D )^2} < \infty$ follows from our bounds on
 $\Var [ \tilde{S}^{iso}_H(\tilde{\cal P}_{\alpha} \cap D)]$.

Set $\sigma_n^2 : = \Var [\tilde{S}^{iso}_H(\tilde{\cal P}_{\alpha} \cap D)]$ and $\sigma_n'^2 := \Var [\tilde{S}^{iso}_H(\tilde{\cal P}_{\alpha} \cap D) | E_n]$. 
The proof of the next lemma is postponed until Section~\ref{sec:variance equivalence}.
\begin{lemma} \label{lem:variance equivalence}
For any $\alpha \in (1, \infty)$, we have
$$
\lim_{n\to \infty} \frac{\sigma_n^2}{\sigma_n'^2}  = 1 \ \mbox{and} \
\lim_{n\to \infty}
\frac{ \mathbb{E} [ \tilde{S}^{iso}_H(\tilde{\cal P}_{\alpha} \cap D)]  - \mathbb{E} [ \tilde{S}^{iso}_H(\tilde{\cal P}_{\alpha} \cap D)  | E_n]}{\sigma_n'} =0.
$$
\end{lemma}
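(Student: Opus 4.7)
The plan is to exploit the decoupling of the Poisson process. Write $\tilde{\cal P}_{\alpha} \cap D = \cP_1 \sqcup \cP_2$, where $\cP_1$ and $\cP_2$ are the restrictions to $D([0, R/2])$ and $D([R/2, R])$ respectively; these are independent Poisson processes, and $E_n$ is precisely the event $\{\cP_2 = \emptyset\}$. Set $S_1 := \tilde{S}^{iso}_H(\cP_1)$. Independence immediately gives $\mathbb{E}[\tilde{S}^{iso}_H(\tilde{\cal P}_{\alpha} \cap D) \mid E_n] = \mathbb{E}[S_1]$ and $\Var[\tilde{S}^{iso}_H(\tilde{\cal P}_{\alpha} \cap D) \mid E_n] = \Var[S_1] = \sigma_n'^2$. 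Moreover, adding points from $\cP_2$ to $\cP_1$ can only destroy isolated points in $D([0, H])$, so $\tilde{S}^{iso}_H(\tilde{\cal P}_{\alpha} \cap D) = S_1 - \Delta$ where $\Delta \geq 0$ counts those $p \in \cP_1 \cap D([0, H])$ that are isolated in $\cP_1$ but meet $\cP_2 \cap B(p) \neq \emptyset$. In particular $\Delta \leq Y := \sum_{p \in \cP_1 \cap D([0, H])} |\cP_2 \cap B(p)|$.

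The crucial estimate is the $\mu_\alpha$-content of $B(p) \cap D([R/2, R])$ for $p \in D([0, H])$. Using $B(p) \subseteq B^+(p)$ from \eqref{eq:inclusion_mapped} and splitting according to whether $y + y(p)$ exceeds $R - C$, the ``fin'' portion contributes
$$
2 \beta e^{y(p)/2} \int_{R/2}^{R - C - y(p)} e^{(1/2 - \alpha) y} \, dy = O(e^{y(p)/2}) \cdot n^{1/2 - \alpha}
$$
(using $\alpha > 1/2$ to bound the integral by its value at $R/2$), while the ``full-strip'' part near the origin contributes only $O(n^{1 - 2\alpha})$ up to polylogarithmic factors. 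Integrating against $\beta e^{-\alpha y(p)}$ over $D([0, H])$ and applying the Campbell--Mecke formula yields $\mathbb{E}[Y] = O(n) \cdot n^{1/2 - \alpha} = O(n^{3/2 - \alpha})$, which is $o(\sqrt n)$ as soon as $\alpha > 1$. Since $\sigma_n' \sim \sigma_n = \Theta(\sqrt n)$ by Corollary~\ref{cor1} and \eqref{Variso}, this directly yields $|\mathbb{E}[\tilde S^{iso}_H(\tilde{\cal P}_{\alpha} \cap D)] - \mathbb{E}[\tilde S^{iso}_H(\tilde{\cal P}_{\alpha} \cap D) \mid E_n]| \leq \mathbb{E}[\Delta] \leq \mathbb{E}[Y] = o(\sigma_n')$, establishing the second claim of the lemma.

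For the variance comparison I would condition on $\cP_2$ and use Campbell twice. Setting $N(p, \cP_2) := |\cP_2 \cap B(p)|$, Campbell applied to $\cP_1$ yields $\mathbb{E}[Y \mid \cP_2] = \sum_{q \in \cP_2} g(q)$ with $g(q) := \mu_\alpha(B(q) \cap D([0, H])) = O(e^{y(q)/2})$ by the symmetric estimate, and $\Var[Y \mid \cP_2] = \int N(p, \cP_2)^2 \beta e^{-\alpha y(p)} dp$. Using the Poisson second-moment identity $\mathbb{E}[N(p, \cP_2)^2] = \mu_\alpha(B(p) \cap D([R/2, R])) + \mu_\alpha(B(p) \cap D([R/2, R]))^2$, one obtains $\mathbb{E}[\Var[Y \mid \cP_2]] = \mathbb{E}[Y] + O(n^{2 - 2\alpha}) = O(n^{3/2 - \alpha})$. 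A second Campbell computation gives $\Var[\mathbb{E}[Y \mid \cP_2]] = O(n) \int_{R/2}^{R} e^{(1 - \alpha) y} dy = O(n^{2 - \alpha})$, which is $o(n)$ precisely when $\alpha > 1$. Combining, $\mathbb{E}[Y^2] = O(n^{2 - \alpha}) = o(n)$, so $\Var[\Delta] \leq \mathbb{E}[Y^2] = o(n)$, and Cauchy--Schwarz gives $|\Cov(S_1, \Delta)| \leq \sqrt{\Var[S_1] \cdot \Var[\Delta]} = O(\sqrt n) \cdot o(\sqrt n) = o(n)$. Expanding $\sigma_n^2 = \Var[S_1 - \Delta] = \Var[S_1] + \Var[\Delta] - 2 \Cov(S_1, \Delta) = \sigma_n'^2 + o(n)$ and using $\sigma_n'^2 = \Theta(n)$ yields $\sigma_n^2/\sigma_n'^2 \to 1$. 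The main obstacle is the sharpness of the bound $\Var[\mathbb{E}[Y \mid \cP_2]] = O(n^{2 - \alpha})$: the integral $\int_{R/2}^R e^{(1 - \alpha) y} dy$ ceases to decay as $\alpha \searrow 1$, mirroring the breakdown of the central limit theorem in the regime $\alpha \in (1/2, 1)$.
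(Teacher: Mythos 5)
Your approach is genuinely different from the paper's, and the central identity you exploit is exactly right: your $S_1 := \tilde{S}^{iso}_H(\cP_1)$ equals the paper's auxiliary sum $\widehat{S}^{iso}_H(\tilde{\cal P}_\alpha \cap D)$ defined in the appendix (not merely in law but pointwise), since both discard the points of $D([R/2,R])$ when deciding isolation. The paper then compares $\Var[\tilde{S}^{iso}_H]$ with $\Var[\widehat{S}^{iso}_H]$ directly by expanding each through the variance formula \eqref{eq:variance} and bounding the difference of the two-point covariance integrals, with a case split according to whether $|x_1-x_2|_\Phi$ is small or large. You instead write $\tilde{S}^{iso}_H = S_1 - \Delta$ with $0\le\Delta\le Y$ and control everything through the law of total variance for $Y$ conditioned on $\cP_2$. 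This is a cleaner route: it isolates the relevant scale (the $\alpha=1$ threshold sits entirely in $\Var[\mathbb{E}[Y\mid\cP_2]]$) and replaces the paper's integral case analysis by moment bounds.

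One step is not justified by the lemmas you cite. You claim $g(q):=\mu_\alpha(B(q)\cap D([0,H]))=O(e^{y(q)/2})$ for $q\in D([R/2,R])$ ``by the symmetric estimate'', but Lemma~\ref{muasym} is only proved for $y(q)\in[0,H]$, and Lemma~\ref{eq:measures_bounds} --- although stated for all $p\in D$ --- is derived from Lemma~\ref{muasym} and used in Section~\ref{sec:CLTiso} only for $p$ with $y(p)\le R/2$, where the full-strip part of $B^+(q)$ (heights $y$ with $y+y(q)>R-C$) does not meet $D([0,H])$. For $y(q)>R-C-H$ that condition fails, and $g(q)=O(e^{y(q)/2})$ would require a separate verification; on that strip it can in fact be off by polylogarithmic factors. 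The repair is short: on $D([R-C-H,R])$ use the trivial bound $g(q)\le\mu_\alpha(D([0,H]))=O(n)$, whose intensity mass is $O(n^{1-2\alpha})\cdot\mathrm{polylog}\,n$, so the contribution to $\Var[\mathbb{E}[Y\mid\cP_2]]$ is $O(n^{3-2\alpha})\cdot\mathrm{polylog}\,n=o(n)$ for $\alpha>1$; but this needs to be said. Finally, ``$\sigma_n'\sim\sigma_n$ by Corollary~\ref{cor1} and \eqref{Variso}'' is part of what the lemma asserts, so reorder: first establish $\sigma_n'^2=\sigma_n^2+o(n)=\Theta(n)$ from \eqref{Variso} and Corollary~\ref{cor1} together with your Cauchy--Schwarz step, and only then divide by $\sigma_n'$ in the expectation comparison.
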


To apply Theorem~\ref{thm:normal_approx} we shall bound  $|\nabla_p F|$
by the number of points of $(\tilde{\cal P}_{\alpha} \cap D) \cup \{p \}$ which are inside the hyperbolic ball around $p$ having height at most $H$.
By the inclusion-exclusion principle, the second order operator $\nabla_{p_1,p_2} F$
is proportional to the number of isolated points of $\tilde{\cal P}_{\alpha} \cap D$ which are contained
in the intersection of the hyperbolic balls around $p_1$ and $p_2$ and having height
at most $H$.  Thus
\begin{equation} \label{eq:bounds_diff_op}
\begin{split}
|\nabla_p F|& \leq \frac{1}{\sigma_n'} \cdot \left( | (\tilde{\cal P}_{\alpha} \cap D) \cap B(p) \cap \R{[0,H]}| +1\right)
\ \mbox{and}
\\
|\nabla_{p_1,p_2} F| &\leq \frac{1}{\sigma_n'} \cdot | (\tilde{\cal P}_{\alpha} \cap D) \cap ( B(p_1) \cap B(p_2)) \cap \R{[0,H]}|.
\end{split}
\end{equation}
Given a Borel-measurable set $A \subset \R{[0,H]}$, we have that ${\text{card}}( \tilde{\cal P}_{\alpha} \cap D  \cap A)$
is a Poisson-distributed random variable with parameter equal to the intensity measure of $A$.
The next lemma, a consequence of Lemma \ref{muasym},
bounds these intensity measures for the sets $A$ appearing in \eqref{eq:bounds_diff_op}.

\begin{lemma} \label{eq:measures_bounds}
There exists a constant $\eta>0$ depending on $\alpha$ and $\nu$ such that for all
$p =(x(p),y(p)) \in \Rcal$ we have
$$\mu_{\alpha} (B(p) \cap \R{[0,H]}) \leq \eta \cdot e^{ \frac{y(p)} {2}}. $$
Hence,
$$ \mu_{\alpha}  ( ( B(p_1) \cap B(p_2)) \cap \R{[0,H]}) \leq \eta \cdot e^{ \frac{1}{2} (y(p_1) \wedge y(p_2) )}.$$
\end{lemma}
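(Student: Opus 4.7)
The plan is to establish the first bound by a three-regime case analysis on $y(p) \in [0, R]$, and then to deduce the second bound immediately from set inclusion. The regimes are $y(p) \in [0, H]$, $y(p) \in (H, R-C)$, and $y(p) \in [R-C, R]$, where $C$ is the constant from Lemma~\ref{lem:relAngle_generic}(iii).

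For $y(p) \leq H$, Lemma~\ref{muasym} gives $\mu_\alpha(B(p)) = \gamma e^{y(p)/2} + o(1)$ uniformly over such $p$, and since $B(p) \cap \R{[0,H]} \subseteq B(p)$, the bound follows for any $\eta > \gamma$ and $n$ large. For $y(p) \in [R-C, R]$ (so $r(p) \leq C$, i.e.\ the point lies near the origin), I use the trivial bound $\mu_\alpha(B(p) \cap \R{[0,H]}) \leq \mu_\alpha(\R{[0,H]}) \leq 2 I_n \beta / \alpha = O(n)$; and since $e^{y(p)/2} \geq e^{(R-C)/2} = \Theta(n)$, this is $O(e^{y(p)/2})$.

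The intermediate regime $y(p) \in (H, R-C)$ is the most delicate. Here I would use the inclusion $B(p) \subseteq \BallUp$ from \eqref{eq:inclusion_mapped} and decompose $\BallUp \cap \R{[0,H]}$ into the ``cone'' portion $\{y < R - C - y(p)\}$, on which the horizontal width is $2\,\OneUp \cdot e^{(y + y(p))/2}$, and the ``plateau'' portion $\{y \geq R - C - y(p)\}$, on which the full strip of width $2 I_n$ appears (the latter portion being empty when $y(p) \leq R - C - H$). Integrating against $\beta e^{-\alpha y}\,dy\,dx$ yields
\begin{align*}
\mu_\alpha(\BallUp \cap \R{[0,H]}) &\leq 2\,\OneUp \beta\, e^{y(p)/2} \int_0^H e^{(\frac{1}{2} - \alpha) y}\, dy \\
&\quad + 2 I_n \beta \int_{\max(0,\, R - C - y(p))}^H e^{-\alpha y}\, dy.
\end{align*}
Since $\alpha > 1/2$, the first integral is bounded by $2/(2\alpha - 1)$, so the first term is $O(e^{y(p)/2})$. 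For the second, using $I_n = \Theta(n)$ and $e^{R/2} = n/\nu$ gives $2 I_n \beta\, e^{-\alpha(R - C - y(p))}/\alpha = O(n^{1 - 2\alpha}) \cdot e^{\alpha y(p)}$, and the bound $e^{(\alpha - 1/2) y(p)} \leq e^{(\alpha - 1/2) R} = n^{2\alpha - 1}$ (valid because $y(p) \leq R$ and $\alpha > 1/2$) produces the exact cancellation $n^{1-2\alpha} \cdot n^{2\alpha - 1} = 1$, so the second term is also $O(e^{y(p)/2})$. Choosing $\eta$ large enough in terms of $\alpha$ and $\nu$ combines the three regimes into the first assertion.

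The second assertion is then immediate: $B(p_1) \cap B(p_2) \subseteq B(p_i)$ for each $i \in \{1,2\}$ yields $\mu_\alpha((B(p_1) \cap B(p_2)) \cap \R{[0,H]}) \leq \min_{i} \mu_\alpha(B(p_i) \cap \R{[0,H]}) \leq \eta\, e^{(y(p_1) \wedge y(p_2))/2}$. The main obstacle is the plateau contribution in the intermediate regime; the cancellation $n^{1-2\alpha} \cdot n^{2\alpha - 1} = 1$ that makes it work ultimately encodes the subcriticality $\alpha > 1/2$.
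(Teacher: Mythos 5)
Your proposal is correct in substance and, notably, fills in work that the paper leaves implicit: the paper supplies no proof of this lemma beyond the single remark that it is ``a consequence of Lemma~\ref{muasym}'', yet Lemma~\ref{muasym} only treats $p \in D([0,H])$, whereas the lemma is invoked later for $p$ ranging over $D([0,R/2])$ and $D([0,h_1])$. Your three-regime decomposition addresses the extension to $y(p) > H$ explicitly, which is genuinely needed, and the key intermediate-regime estimate (the cone integral gives $O(e^{y(p)/2})$ using $\alpha > 1/2$, while the plateau contribution $O(n^{1-2\alpha})e^{\alpha y(p)}$ is tamed by $e^{(\alpha - 1/2)y(p)} \le (n/\nu)^{2\alpha-1}$) is exactly right, as is the deduction of the second assertion from $B(p_1)\cap B(p_2) \subseteq B(p_i)$.

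One detail you should fix: you cite Lemma~\ref{lem:relAngle_generic}(iii) for the constant $C$, but part (iii) is the version with $C = C(n) \to \infty$ (e.g.\ $C = 5\log R$ as used in the proof of Lemma~\ref{muasym}). Your Regime~3 argument requires $e^{(R-C)/2} = \Theta(n)$, which forces $C = O(1)$: if $C = 5\log R$, then $e^{(R-C)/2} = \Theta(n/(\log n)^{5/2})$ and the trivial bound $\mu_\alpha(B(p)\cap \R{[0,H]}) \le 2n$ would only give $\eta = \Theta((\log n)^{5/2})$, not a constant. You should instead fix $\eps \in (0,1/3)$ once and for all and take $C := C(\eps)$ as given by part~(i), which is indeed a constant depending only on $\eps$; the same fixed $C$ then serves all three regimes (for Regime~1 you may still invoke Lemma~\ref{muasym} directly, since $B(p) \cap \R{[0,H]} \subseteq B(p)$ and that lemma's estimate is an upper bound uniformly for $p \in D([0,H])$ regardless of the $C$ used internally there). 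With this correction the proof is sound.
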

Set $\lambda (p) : = \eta \cdot e^{y(p)/2}$.
Thus, $|\nabla_p F|$ is stochastically dominated from above by a random variable
$X(p)+1$, where $X(p)$ distributed as $\mathrm{Po}(\lambda (p))$.
Analogously, $|\nabla_{p_1,p_2}|$ is stochastically dominated by a Poisson-distributed
random variable $X(p_1,p_2)$ with parameter $\lambda (p_1,p_2) := \eta \cdot
e^{(y(p_1) \wedge y(p_2) )/2}$.  We now bound $\gamma_3$, $\gamma_2$ and $\gamma_1$ in this order.

\begin{lemma} \label{gam3} If $\alpha \in (1, \infty)$, then $\gamma_3 = o(1)$.
\end{lemma}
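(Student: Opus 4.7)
\textit{Proof plan for Lemma \ref{gam3}.} The plan is to bound $|\nabla_p F|$ pointwise by Poisson moments via \eqref{eq:bounds_diff_op} and Lemma \ref{eq:measures_bounds}, and then integrate explicitly, exploiting $\alpha > 1$ to keep the result $o(1)$ after dividing by $\sigma_n'^3$. Throughout the plan we work on the conditional probability space defined by the event $E_n$, noting that $\tilde{\cal P}_{\alpha} \cap D([0, R/2])$ has the same distribution under $\mathbb{P}(\cdot \mid E_n)$ as unconditionally (since $E_n$ concerns only the complementary slab $D([R/2, R])$), so expectations of functionals of $\tilde{\cal P}_{\alpha} \cap D([0, R/2])$ are unchanged under the conditioning.

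First, by \eqref{eq:bounds_diff_op} we have $|\nabla_p F| \leq \sigma_n'^{-1}(X(p) + 1)$ where $X(p)$ is stochastically dominated by a Poisson random variable with parameter $\lambda(p) := \eta \, e^{y(p)/2}$ thanks to Lemma \ref{eq:measures_bounds}. Using the standard formula $\mathbb{E}[Y^3] = \lambda + 3\lambda^2 + \lambda^3$ for $Y \sim \text{Po}(\lambda)$, we obtain
\begin{equation*}
\mathbb{E}|\nabla_p F|^3 \leq C\, \sigma_n'^{-3}\bigl(1 + e^{3 y(p)/2}\bigr)
\end{equation*}
for some absolute constant $C$ independent of $n$ and $p$.

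Next, I would integrate against $\mu_\alpha$ over $\Space = D([0, R/2])$. Using \eqref{Pnavdef}, Fubini's theorem, and the fact that $2 I_n = \Theta(n)$, one computes
\begin{equation*}
\int_{\Space} (1 + e^{3 y/2}) \, \mu_\alpha(d(x,y)) = O(n) \int_0^{R/2} (1 + e^{3y/2}) e^{-\alpha y}\, dy.
\end{equation*}
The inner integral is $O(1)$ when $\alpha > 3/2$, $O(R)$ when $\alpha = 3/2$, and $O(e^{(3/2 - \alpha) R/2}) = O(n^{3/2 - \alpha})$ when $1 < \alpha < 3/2$. Combining these three cases with the bound above and the lower bound $\sigma_n'^2 = \Theta(n)$ obtained from Theorem~\ref{prop:var_Shat} together with Lemma~\ref{lem:variance equivalence}, I would conclude
\begin{equation*}
\gamma_3 = O\bigl(\sigma_n'^{-3}\bigr) \cdot
\begin{cases} O(n) & \alpha > 3/2 \\ O(n R) & \alpha = 3/2 \\ O(n^{5/2 - \alpha}) & 1 < \alpha < 3/2\end{cases}
\; = \;
\begin{cases} O(n^{-1/2}) & \alpha > 3/2 \\ O(n^{-1/2} \log n) & \alpha = 3/2 \\ O(n^{1 - \alpha}) & 1 < \alpha < 3/2\end{cases},
\end{equation*}
each of which is $o(1)$ precisely because $\alpha > 1$.

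The main obstacle is the small-$\alpha$ regime $1 < \alpha < 3/2$: there the cubic moment $\mathbb{E}(X(p)+1)^3$ grows like $e^{3y(p)/2}$ and overcomes the decay $e^{-\alpha y(p)}$ of the intensity up to height $R/2$, producing the polynomial blow-up $n^{3/2 - \alpha}$. The calculation nonetheless survives because this blow-up is strictly dominated by $\sigma_n'^{3}/n = \Theta(n^{1/2})$ for $\alpha > 1$; this is precisely the threshold below which the method fails and motivates the restriction $\alpha \in (1, \infty)$ in the central limit theorem \eqref{CLTiso}. As a final bookkeeping step, the finiteness of $\int_{\Space} \mathbb{E}(\nabla_p F)^2 \, \mu_\alpha(dp)$ needed to place $F$ in the domain of $\nabla$ follows from the analogous (easier) second-moment computation, so no further verification is needed.
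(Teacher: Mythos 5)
Your proposal follows essentially the same route as the paper: bound $|\nabla_p F|$ via \eqref{eq:bounds_diff_op} and Lemma~\ref{eq:measures_bounds} by a shifted Poisson random variable, compute the cubic moment, integrate against $\mu_\alpha$ over $D([0,R/2])$, and close with a case analysis on $\alpha$ using $\sigma_n' = \Theta(n^{1/2})$. Your three-way split at $\alpha = 3/2$ is slightly finer than the paper's two cases (the paper absorbs the $1 < \alpha < 3/2$ and $\alpha = 3/2$ cases into a single $O(R)\,n^{1-\alpha}$ bound), and your remark that conditioning on $E_n$ leaves the law of $\tilde{\cal P}_\alpha \cap D([0,R/2])$ unchanged makes explicit a step the paper takes silently, but none of this changes the substance of the argument.
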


\noindent{\em Proof.}  For $p \in \Space$,
$$\mathbb{E}{|\nabla_p F|^3} \leq \frac{1}{\sigma_n'^{3}}
  \mathbb{E} [(X(p)+1)^3]
= O (1) \cdot \frac{1}{\sigma_n'^{3}}\cdot e^{ \frac{3y(p)} {2}}.
$$
We deduce that
$$\int \mathbb{E}{|\nabla_p F|^3} \lambda (dp)=
O\left(1\right)\frac{1}{\sigma_n'^{3}}\cdot \int_{-I_n}^{I_n} \int_0^{R/2}e^{ \frac{3y} {2} - \alpha y}dy dx
= O\left(1\right)\frac{n}{\sigma_n'^{3}}\cdot \int_0^{R/2}e^{ \frac{3y} {2} - \alpha y}dy.
 $$
Recall that $n = \nu e^{R/2}$ and $\sigma_n' = \Theta(n^{1/2})$ by the first part of Lemma~\ref{lem:variance equivalence} and \eqref{eq:variances_hat_Siso}.
Therefore if $\alpha \in (1, 3/2]$ then
$$
\gamma_3 = O\left(R\right) \frac{n}{\sigma_n'^{3}}\cdot n^{\frac{3} {2} -\alpha} = O(R)
\cdot n^{1+ \frac{3} {2}  - \alpha - \frac{3} {2}} = O(R) \cdot n^{1-\alpha} =  o(1).
$$
If $\alpha \in (3/2, \infty)$, then $\gamma_3 = o(1).$ \qed

\vskip.3cm

Let us  point out that the bound on $\int \mathbb{E}{|\nabla_p F|^3} \lambda(dp)$ is also a bound
on $\int \mathbb{E}{|\nabla_p F|^2} \lambda (dp)$ and thus $F$ is in the domain of $\nabla$.

\begin{lemma} \label{gam2} If $\alpha \in (1, \infty)$, then $\gamma_2 = o(1)$.
\end{lemma}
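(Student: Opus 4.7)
The plan is to bound the integrand of $\gamma_2^2$ via Cauchy--Schwarz and the Poisson-moment estimates behind Lemma \ref{eq:measures_bounds}, then to exploit the geometry of the hyperbolic balls to shrink the range of the $x$-integration, and finally to evaluate in the $y$-coordinates.

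First I would use \eqref{eq:bounds_diff_op} to dominate $|\nabla^2_{p_i,p_3}F|$ by $\sigma_n'^{-1}\,X(p_i,p_3)$, where $X(p_i,p_3)$ is Poisson of parameter $\lambda_{i,3} := \mu_\alpha(B(p_i) \cap B(p_3) \cap D([0,H])) \leq \eta\,e^{(y(p_i) \wedge y(p_3))/2}$. Cauchy--Schwarz together with the Poisson-moment identity $\mathbb{E}[\mathrm{Po}(\lambda)^4] = O((1+\lambda)^4)$ yields
\[
\mathbb{E}\bigl[(\nabla^2_{p_1,p_3}F)^2\,(\nabla^2_{p_2,p_3}F)^2\bigr] = O\bigl(\sigma_n'^{-4}\bigr)\,(1+\lambda_{1,3})^2 (1+\lambda_{2,3})^2.
\]
Next, the integrand vanishes unless there exists a common point $q \in B(p_i) \cap B(p_3) \cap D([0,H])$ for $i=1,2$. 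Since $y(p_i),y(p_3) \leq R/2$ and $y(q) \leq H$, Lemma \ref{lem:relAngle_generic}(i) gives $|x(p_i) - x(q)|_\Phi = O(R^2\,e^{y(p_i)/2})$, so the triangle inequality forces
\[
|x(p_i) - x(p_3)|_\Phi = O(R^2)\bigl(e^{y(p_i)/2} + e^{y(p_3)/2}\bigr), \qquad i=1,2.
\]

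Integrating $x(p_3)$ freely over $(-I_n, I_n]$ (contributing $\Theta(n)$) and $x(p_1),x(p_2)$ over the restricted ranges, and then changing variables $Y_k := e^{y(p_k)/2}$, reduces $\gamma_2^2$ to $O(n R^4 / \sigma_n'^4)$ times
\[
\iiint_{[1,e^{R/4}]^3} \prod_{i=1,2}(Y_i + Y_3)(1 + Y_i \wedge Y_3)^2 \cdot Y_1^{-2\alpha-1} Y_2^{-2\alpha-1} Y_3^{-2\alpha-1}\,dY_1\,dY_2\,dY_3.
\]
A case split according to whether each of $Y_1, Y_2$ is $\leq Y_3$ or $> Y_3$ shows this triple integral is uniformly $O(1)$ whenever $\alpha > 1$; for instance in the heaviest case $Y_1, Y_2 > Y_3$ the integrand reduces to $Y_1^{-2\alpha}Y_2^{-2\alpha}Y_3^{3-2\alpha}$ (up to constants) and the tail behavior is governed by $\int_1^\infty Y_3^{5-6\alpha}\,dY_3$, finite exactly when $\alpha > 1$. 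Combined with $\sigma_n'^2 = \Theta(n)$, supplied by Lemma \ref{lem:variance equivalence} together with \eqref{eq:variances_hat_Siso}, this gives $\gamma_2^2 = O(R^4/n) = o(1)$.

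The hard part will be ensuring the triple integral converges uniformly in $n$ as $\alpha$ approaches $1$: this depends crucially on the sharper $Y_i \wedge Y_3$ bound for $\lambda_{i,3}$ from Lemma \ref{eq:measures_bounds} (the cruder $Y_i \vee Y_3$ bound would already fail in the heaviest case), and on the geometric $x$-range restriction just above, without which the naive $x$-integration would produce $O(n^3/\sigma_n'^4) = O(n)$ and render the estimate useless.
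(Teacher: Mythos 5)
Your proof is correct and follows essentially the same approach as the paper: bound the second-difference operators by Poisson-dominated counts via Lemma~\ref{eq:measures_bounds}, apply Cauchy--Schwarz for the fourth moments, restrict the two non-shared $x$-integrals to windows of width $O(R^2(Y_i + Y_3))$ around the shared variable (you argue via a common point and the triangle inequality; the paper cites Lemma~\ref{4.1} directly), and split the residual $y$-integral into the four order regions to show it is $O(1)$ exactly when $\alpha > 1$, yielding $\gamma_2^2 = O(R^4/n)$. The only superficial differences are the labeling of the shared variable ($p_3$ in your version, relabeled to $p_1$ in the paper) and your preference for the $Y_k = e^{y_k/2}$ coordinates throughout.
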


\noindent{\em Proof.} The second inequality in ~\eqref{eq:bounds_diff_op} implies
$$
|\nabla_{p_1,p_2} F| \leq \frac{1}{\sigma_n'} \cdot
\min \{ | \tilde{\cal P}_{\alpha} \cap D \cap B(p_1) \cap \R{[0,R/2]}|,
| \tilde{\cal P}_{\alpha} \cap D  \cap B(p_2) \cap \R{[0,R/2]}| \}.
 $$
Now, we claim that there exists a constant $\gamma> 0$ such that if
\be \label{LBgam}
|x(p_1) - x(p_2) |_{\Phi} > \gamma e^{ \frac{H} {2} } \left(e^{ \frac{y(p_1)} {2}} + e^{ \frac{y(p_2)} {2}} \right).
\ee
then $B(p_1) \cap B(p_2) \cap \R{[0,H]} = \emptyset$.
Indeed, for any $p\in \R{[0,H]}$ we have $B(p) \cap \R{[0,H]} \subseteq B_{H}^+(p)$
where $B_{H}^+(p)$ is defined at \eqref{eq:ball_plus}.
Now, Lemma~\ref{4.1} implies that there exist some constant $\gamma>0$ such that
$B_H^+(p_1) \cap B_H^+(p_2) = \emptyset$ if \eqref{LBgam} holds.
This implies that when we integrate with respect to $x(p_2)$ and $x(p_3)$, relative distances
with respect to $p_1$ greater than this quantity have no contribution to the integral defining
$\gamma_2$. In other words,
 $p_2$ (and $p_3$, respectively) contributes to this integral only if
$$ |x(p_1) - x(p_2) |_{\Phi}\leq \gamma e^{ \frac{H} {2} } \left(e^{ \frac{y(p_1)} {2}} + e^{ \frac{y(p_2)} {2}} \right) \leq
2 \gamma e^{ \frac{H} {2} } e^{  \frac{y(p_1) \vee y(p_2)}{2}}.$$

Therefore, when we integrate over the choices of $p_1,p_2,p_3$ against the intensity measure
$d\mu^3 (p_1,p_2,p_3)$ (letting $x_i=x(p_i)$ and $y_i=y(p_i)$), we will get (using that $e^{y_1/2} + e^{y_2/2} \leq 2 e^{(y_1\vee y_2)/2}$)
\begin{equation*}
\begin{split}
\gamma_2^2 &\leq
4\gamma^2 \cdot \frac{1}{ \sigma_n'^4}\int_{-I_n}^{I_n} \int_{-I_n}^{I_n} \int_{-I_n}^{I_n} \int_{[0,R/2]^3}
\Ex{(X(p_1,p_2) X(p_1,p_3))^2}   \\
& \hspace{2cm} \times
{\mathbf 1} (|x_2 - x_1|_{\Phi}\leq 2 \gamma e^{H/2} e^{(y_1 \vee y_2 )/2} )
{\mathbf 1} (|x_3 - x_1|_{\Phi}\leq 2 \gamma e^{H/2} e^{(y_1 \vee y_3 )/2} )  \\
&\hspace{4cm} \times  e^{-\alpha y_1} e^{-\alpha y_2} e^{-\alpha y_3} dy_1 dy_2  dy_3 dx_3 dx_2 dx_1.
\end{split}
\end{equation*}
By the Cauchy-Schwarz inequality we have
\begin{equation} \label{eq:joint_X}
\begin{split}
 \Ex{(X(p_1,p_2) X(p_1,p_3))^2}  & \leq    \Ex{(X(p_1,p_2)^4}^{1/2} \Ex {X(p_1,p_3))^4}^{1/2} \\
 &= O(1)  \cdot  e^{y_1\wedge y_2} \cdot e^{y_1 \wedge y_{3}}.
\end{split}
\end{equation}
Using this inequality and integrating first with respect to $x_2$ and $x_3$ we obtain
\begin{equation} \label{eq:triple_int}
\begin{split}
\gamma_2^2 &=O(1)\cdot
e^H \cdot \frac{1}{ \sigma_n'^4}\int_{-I_n}^{I_n} \int_{[0,R/2]^3}
e^{(y_1\vee y_2)/2} \cdot
e^{(y_1\vee y_3)/2} e^{y_1\wedge y_2} \cdot e^{y_1 \wedge y_3}  \\
&\hspace{4cm}  \times e^{-\alpha y_1} e^{-\alpha y_2} e^{-\alpha y_3} dy_1 dy_2  dy_3 dx_1\\
&\stackrel{I_n =\Theta(n)}{=}O(1) \cdot e^{H}\frac{n}{ \sigma_n'^4}\int_{[0,R/2]^3}
e^{(y_1\vee y_2)/2} \cdot
e^{(y_1\vee y_3)/2} e^{y_1\wedge y_2} \cdot e^{y_1 \wedge y_3}  \\
& \hspace{4cm} \times e^{-\alpha y_1} e^{-\alpha y_2} e^{-\alpha y_3} dy_1 dy_2  dy_3.
\end{split}
\end{equation}

To bound the triple integral in~\eqref{eq:triple_int} for $\alpha \in (1, \infty)$, we will split the domain of integration  into four sub-domains:
\begin{equation*}
\begin{split}
D_1& := \{(y_1,y_2,y_3)\in [0,R/2]^3 \ : \ y_1\leq y_2,y_3\},\\
D_2&:=\{(y_1,y_2,y_3)\in [0,R/2]^3 \ : \ y_2 \leq y_1 \leq y_3\}, \\
D_3&:=\{(y_1,y_2,y_3)\in [0,R/2]^3 \ : \ y_3 \leq y_1 \leq y_2\}, \\
D_4& :=\{(y_1,y_2,y_3)\in [0,R/2]^3 \ : \ y_2, y_3 \leq  y_1 \}.
\end{split}
\end{equation*}
We evaluate the integral in~\eqref{eq:triple_int} on each of these four sub-domains.
On $D_1$ we have:
\begin{align*}
& \int_{D_1} e^{(y_1\vee y_2)/2} \cdot
e^{(y_1\vee y_3)/2} e^{y_1\wedge y_2} \cdot e^{y_1 \wedge y_3} \cdot e^{-\alpha y_1} e^{-\alpha y_2} e^{-\alpha y_3} dy_1 dy_2  dy_3 \\
& = \int_{D_1} e^{y_2/2}\cdot  e^{y_3/2} \cdot e^{2y_1} \cdot e^{-\alpha y_1} e^{-\alpha y_2} e^{-\alpha y_3} dy_1 dy_2  dy_3 \\
&\leq \int_{0}^{R/2} e^{2y_1} \left(\int_{y_1}^\infty e^{(\frac{1} {2}  - \alpha ) y_2} dy_2 \right)
\left(\int_{y_1}^\infty e^{( \frac{1} {2}  - \alpha ) y_3} dy_3 \right) e^{-\alpha y_1} dy_1 \\
&=O(1)\cdot \int_0^{R/2} e^{2y_1 + (1-2\alpha)y_1-\alpha y_1} dy_1 \\
&= O(1) \cdot \int_0^{R/2} e^{3(1 - \alpha) y_1} dy_1 =O(1).
\end{align*}
For the second sub-domain $D_2$, we get:
\begin{align*}
& \int_{D_2}  e^{(y_1\vee y_2)/2} \cdot
e^{(y_1\vee y_3)/2} e^{y_1\wedge y_2} \cdot e^{y_1 \wedge y_3} \cdot e^{-\alpha y_1} e^{-\alpha y_2} e^{-\alpha y_3} dy_1 dy_2  dy_3  \\
& = \int_{D_2}e^{y_1/2} \cdot e^{y_3/2} \cdot e^{y_2} \cdot e^{y_1} \cdot e^{-\alpha y_1} e^{-\alpha y_2} e^{-\alpha y_3} dy_1 dy_2  dy_3 \\
&=\int_{D_2} e^{3 y_1/2 + y_2 + y_3/2} \cdot e^{-\alpha y_1} e^{-\alpha y_2} e^{-\alpha y_3} dy_1 dy_2  dy_3 \\
&\leq \int_{0}^{R/2}e^{3y_1/2} \left(\int_0^{y_1} e^{y_2 (1-\alpha)} dy_2 \right)
\left(\int_{y_1}^{\infty} e^{(\frac{1} {2} -\alpha)y_3} dy_3 \right) \cdot e^{-\alpha y_1} dy_1 \\
&= O(1) \cdot
\int_{0}^{R/2} e^{3y_1/2 + (\frac{1} {2}
 -\alpha)y_1 - \alpha y_1} dy_1 \\
& =O(1) \cdot \int_{0}^{R/2} e^{2(1- \alpha) y_1} dy_1 =O(1).
\end{align*}
The third sub-domain $D_3$ gives an identical result due to symmetry.

Finally, for the fourth sub-domain $D_4$  we get:
\begin{align*}
& \int_{D_4}  e^{(y_1\vee y_2)/2} \cdot
e^{(y_1\vee y_3)/2} e^{y_1\wedge y_2} \cdot e^{y_1 \wedge y_3} \cdot e^{-\alpha y_1} e^{-\alpha y_2} e^{-\alpha y_3} dy_1 dy_2  dy_3 \\
& = \int_{D_4} e^{y_1} e^{y_2 + y_3} \cdot e^{-\alpha y_1} e^{-\alpha y_2} e^{-\alpha y_3} dy_1 dy_2  dy_3 \\
&  = \int_0^{R/2} e^{y_1}  \left( \int_0^{y_1}e^{(1-\alpha) y_2}dy_2 \right)
\left( \int_0^{y_1}e^{(1-\alpha) y_3}dy_3 \right)
e^{-\alpha y_1} dy_1 =O(1).
\end{align*}
Combining the integrals for each of the four sub-domains we obtain
\begin{equation} \label{fourcases}
\int_{0}^{R/2} \int_{0}^{R/2} \int_{0}^{R/2}
e^{(y_1\vee y_2)/2} \cdot
e^{(y_1\vee y_3)/2} e^{y_1\wedge y_2} \cdot e^{y_1 \wedge y_3} \cdot e^{-\alpha y_1} e^{-\alpha y_2} e^{-\alpha y_3} dy_1 dy_2  dy_3 =O(1).
\end{equation}
Substituting the bound \eqref{fourcases} into~\eqref{eq:triple_int} we deduce for $\alpha \in (1, \infty)$ that
$\gamma_2 \leq O(1)\cdot e^{H}\cdot n^{1/2}/\sigma_n'^2 = o(1).$  This completes the proof of Lemma \ref{gam2}.
\qed

\begin{lemma} \label{gam1} If $\alpha \in (1, \infty)$, then $\gamma_1 = o(1)$.
\end{lemma}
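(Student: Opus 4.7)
The plan is to mirror the argument for $\gamma_2$ from Lemma~\ref{gam2}, decoupling the integrand via Cauchy--Schwarz and exploiting the localization of $\nabla^2$. First I apply Cauchy--Schwarz to both joint expectations appearing in the integrand of $\gamma_1^2/16$, writing for instance
\begin{equation*}
\mathbb{E}[(\nabla_{p_2}F)^2(\nabla_{p_3}F)^2]^{1/2} \leq \mathbb{E}[(\nabla_{p_2}F)^4]^{1/4} \mathbb{E}[(\nabla_{p_3}F)^4]^{1/4},
\end{equation*}
and similarly for the $\nabla^2$ factor. Combining~\eqref{eq:bounds_diff_op} with the standard fourth-moment estimates for Poisson random variables and Lemma~\ref{eq:measures_bounds} then gives
\begin{equation*}
\mathbb{E}[(\nabla_p F)^4]^{1/4} = O(\sigma_n'^{-1})\, e^{y(p)/2}, \qquad \mathbb{E}[(\nabla^2_{p_i,p_j}F)^4]^{1/4} = O(\sigma_n'^{-1})\, e^{(y(p_i)\wedge y(p_j))/2}.
\end{equation*}

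Next I exploit the localization of $\nabla^2$: the factor $\mathbb{E}[(\nabla^2_{p_1,p_2}F)^2(\nabla^2_{p_1,p_3}F)^2]^{1/2}$ vanishes unless both of $B(p_1)\cap B(p_2)\cap D([0,H])$ and $B(p_1)\cap B(p_3)\cap D([0,H])$ are non-empty. Exactly as in the proof of Lemma~\ref{gam2} (via Lemma~\ref{4.1}), this forces $|x(p_1)-x(p_i)|_\Phi \leq O(e^{H/2}\, e^{(y(p_1)\vee y(p_i))/2})$ for $i=2,3$. Substituting all bounds into $\gamma_1^2$ and integrating over $x_2, x_3$ first (each contributing a factor $O(e^{H/2} e^{(y_1 \vee y_i)/2})$ from the localization) and then $x_1$ (contributing $O(n)$ by translation invariance), I reduce the problem to
\begin{equation*}
\gamma_1^2 \leq O(1) \cdot \frac{n\, e^{H}}{\sigma_n'^{4}} \int_{[0,R/2]^3} e^{y_2/2} e^{y_3/2}\, e^{(y_1 \wedge y_2)/2} e^{(y_1 \vee y_2)/2}\, e^{(y_1 \wedge y_3)/2} e^{(y_1 \vee y_3)/2}\, e^{-\alpha(y_1+y_2+y_3)}\, dy_1 dy_2 dy_3.
\end{equation*}

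Invoking the identity $e^{(a\wedge b)/2} e^{(a\vee b)/2} = e^{(a+b)/2}$ twice collapses the integrand to $e^{(1-\alpha)(y_1+y_2+y_3)}$, and for $\alpha\in(1,\infty)$ the triple $y$-integral is $O(1)$. Since $\sigma_n'^2 = \Theta(n)$ (by Lemma~\ref{lem:variance equivalence} together with~\eqref{eq:variances_hat_Siso}) and $e^H = R^4$, this yields $\gamma_1^2 = O(R^4/n) = o(1)$, as required.

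The main obstacle I foresee is setting up the \emph{double} localization constraint correctly on the pairs $(p_1,p_2)$ and $(p_1,p_3)$: without \emph{both} constraints in place, the $x_2$ and $x_3$ integrations would each contribute a factor $I_n = \Theta(n)$, blowing up the bound. Once both are in hand, the maximum/minimum cancellation reduces a delicate-looking triple integral to a product of one-dimensional exponential integrals that converge precisely because $\alpha>1$.
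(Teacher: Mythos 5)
Your proof is correct and follows essentially the same route as the paper (Cauchy--Schwarz on both factors, localization of $\nabla^2$ via Lemma~\ref{4.1}, reduction to a triple height integral). Your observation that $e^{(a\wedge b)/2}e^{(a\vee b)/2}=e^{(a+b)/2}$ collapses the integrand directly to $e^{(1-\alpha)(y_1+y_2+y_3)}$, avoiding the paper's case-split over the four sub-domains $D_1,\ldots,D_4$, is a cleaner way to finish the same computation.
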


The proof of this lemma is almost identical to the proof of the previous lemma. We postpone it to
Section~\ref{sec:lemma_gam1}.

We now establish the central limit theorem at ~\eqref{CLTiso}.
Consider the random variable
$$
\hat{S}^{iso}_H : =\frac{ \tilde{S}^{iso}_H (\tilde{\cal P}_{\alpha} \cap D) - \Ex{\tilde{S}^{iso}_H (\tilde{\cal P}_{\alpha} \cap D)| E_n } }  { \sqrt  {\Var [\tilde{S}^{iso}_H (\tilde{\cal P}_{\alpha} \cap D)|E_n]}}
$$
on the conditional space $E_n$.
Theorem~\ref{thm:normal_approx} and  Lemmas \ref{gam3}- \ref{gam1} yield
\begin{equation} \label{dW}
d_W \left( \hat{S}^{iso}_H, N \right)\leq  \gamma_1 + \gamma_2 + \gamma_3 = o(1).
\end{equation}
Recalling that $\mathbb{P} (E_n) = 1 - O(n^{ 1 - \alpha})$, we have
\begin{align*}
& \Pro{\frac{  \tilde{S}^{iso}_H (\tilde{\cal P}_{\alpha} \cap D) - \mathbb{E} [ \tilde{S}^{iso}_H (\tilde{\cal P}_{\alpha} \cap D) ] } { \sqrt {\Var [ {S}^{iso}_H (\tilde{\cal P}_{\alpha} \cap D) ]} } \leq x} \\
& = \Pro{ \frac{  \tilde{S}^{iso}_H (\tilde{\cal P}_{\alpha} \cap D) - \Ex{ \tilde{S}^{iso}_H (\tilde{\cal P}_{\alpha} \cap D) }} { \sqrt {\Var [ \tilde{S}^{iso}_H (\tilde{\cal P}_{\alpha} \cap D) ]} } \leq x | E_n } + o(1) \\
& = \Pro{ \frac{ \tilde{S}^{iso}_H (\tilde{\cal P}_{\alpha} \cap D) - \Ex{ \tilde{S}^{iso}_H (\tilde{\cal P}_{\alpha} \cap D)| E_n } }  { \sqrt  {\Var [\tilde{S}^{iso}_H (\tilde{\cal P}_{\alpha} \cap D) | E_n] } } \leq x +o(1) | E_n } +o(1),
\end{align*}
where the last equality follows by Lemma~\ref{lem:variance equivalence}. Since the bound \eqref{dW} shows that $\tilde{S}^{iso}_H (\tilde{\cal P}_{\alpha} \cap D)$ satisfies a central limit theorem on $E_n$, the
probability on the left-hand side of the above display converges to $\Phi(x)$.  Thus the central limit theorem  ~\eqref{CLTiso} holds.

\vskip.3cm

\subsection{The regime $\alpha \in (1/2,1)$}

We establish that $\tilde{S}^{iso}_H (\tilde{\cal P}_{\alpha} \cap D) $ does not exhibit normal convergence for $\alpha \in (1/2, 1)$.
We redefine $A_n$ to be the event that ${\rm{card}}( (\tilde{\cal P}_{\alpha} \cap D) \cap \R{[h_1,R]}) =0$, where now
$h_1 := R/(2\alpha) +  (\log \log R)/ 2 \alpha$. That is, on the event $A_n$ there are no points having height greater than $h_1$. An elementary calculation
shows that $\P(A_n) = 1-o(1)$.

For any $x \in \mathbb{R}$ we have
\begin{align*}
& \Pro{\frac{ \tilde{S}^{iso}_H (\tilde{\cal P}_{\alpha} \cap D)  - \mathbb{E} [ \tilde{S}^{iso}_H (\tilde{\cal P}_{\alpha} \cap D)] } { \sqrt {\Var [ \tilde{S}^{iso}_H (\tilde{\cal P}_{\alpha} \cap D) ] } } \leq x} \\
& = \Pro{\frac{ \tilde{S}^{iso}_H (\tilde{\cal P}_{\alpha} \cap D)  - \mathbb{E} [\tilde{S}^{iso}_H (\tilde{\cal P}_{\alpha} \cap D) ] } { \sqrt {\Var [ \tilde{S}^{iso}_H (\tilde{\cal P}_{\alpha} \cap D)]  } } \leq x | A_n } + o(1).
\end{align*}

We are going to show that
\begin{equation} \label{eq:vars_ratio}
\Var [\tilde{S}^{iso}_H (\tilde{\cal P}_{\alpha} \cap D)  |A_n ] = o(\Var [\tilde{S}^{iso}_H (\tilde{\cal P}_{\alpha} \cap D)] ).
\end{equation}
Since
$$
 \Var \left[ \frac{ \tilde{S}^{iso}_H (\tilde{\cal P}_{\alpha} \cap D)  -  \mathbb{E} [ \tilde{S}^{iso}_H (\tilde{\cal P}_{\alpha} \cap D)]  } { \sqrt {\Var [\tilde{S}^{iso}_H (\tilde{\cal P}_{\alpha} \cap D)] }} |A_n \right] = \frac{\Var [\tilde{S}^{iso}_H (\tilde{\cal P}_{\alpha} \cap D)  |A_n]}{\Var [\tilde{S}^{iso}_H (\tilde{\cal P}_{\alpha} \cap D)] } =o(1)
 $$
this  implies that
$$
\Pro{\frac{ \tilde{S}^{iso}_H (\tilde{\cal P}_{\alpha} \cap D)  - \mathbb{E} [ \tilde{S}^{iso}_H (\tilde{\cal P}_{\alpha} \cap D) ] }
{ \sqrt {\Var [\tilde{S}^{iso}_H (\tilde{\cal P}_{\alpha} \cap D)]  } } \leq x | A_n} $$ cannot converge to $\Phi(x)$ and therefore
$$
\frac{ \tilde{S}^{iso}_H (\tilde{\cal P}_{\alpha} \cap D)  -  \mathbb{E} [ \tilde{S}^{iso}_H (\tilde{\cal P}_{\alpha} \cap D) ]} { \sqrt {\Var [\tilde{S}^{iso}_H (\tilde{\cal P}_{\alpha} \cap D)] }}
$$
cannot converge in distribution to a standard normally distributed random variable $N$.

We now show \eqref{eq:vars_ratio}. We will bound $\Var [\tilde{S}^{iso}_H (\tilde{\cal P}_{\alpha} \cap D)  |A_n]$ using the Poincar\'e inequality
$$
 \Var F  \leq \Ex{\int_\Space (\nabla_p F(\cP))^2 \lambda  (dp)}.$$
We put $\lambda$ to be $\mu_{\alpha}$ and set
$$
F: = \tilde{S}^{iso}_H (\tilde{\cal P}_{\alpha} \cap D)  |  A_n, \ \Space:=\R{[0,h_1]}, \ \mbox{and} \
\cP := \tilde{\cal P}_{\alpha} \cap D .
$$

By Lemma~\ref{eq:measures_bounds} and the discussion immediately after its statement, we have that
$|\nabla_p F(\cP)|$ is stochastically bounded by
$X(p)+1$ where $X(p)$ is a Poisson-distributed random variable with parameter $\eta \cdot e^{y(p)/2}$.
Hence,
\begin{align*}
\Ex{\int_\Space (\nabla_p F(\cP))^2 \lambda (dp)}  & \leq  \int_\Space \Ex{(\nabla_p F(\cP))^2} \lambda (dp) \\
& = O(1) \cdot n \int_0^{h_1} e^{(1-\alpha )y}dy \\
& = O(1) \cdot n \cdot e^{(1-\alpha)h_1}.
\end{align*}
Recalling  $h_1 = R/2\alpha  + (\log \log R)/\alpha$
and $n = \nu e^{R/2}$ we obtain
$$
\int_\Space \Ex{(\nabla_p F(\cP))^2} \lambda (dp)  = O(\log^{\frac{1-\alpha}{\alpha}} R) \cdot n^{1+ (1-\alpha)/\alpha}
\stackrel{\frac{1-\alpha}{\alpha} <2}{=} O(\log^2 R) \cdot n^{ \frac{1} {\alpha} }.
$$
We conclude that
$
\Var [\tilde{S}^{iso}_H (\tilde{\cal P}_{\alpha} \cap D)  |A_n] =O(\log^2 R) \cdot n^{1/\alpha}.
$
By Theorem \ref{prop:var_Shat} we have $\Var [\tilde{S}^{iso}_H (\tilde{\cal P}_{\alpha} \cap D)]  = \Theta (1) \cdot n^{3-2\alpha}$.
But $1/\alpha < 3 -2 \alpha$, for $\alpha \in (1/2, 1)$.
Thus~\eqref{eq:vars_ratio} follows, concluding the proof of Theorem \ref{mainCLT}.
\qed

\subsubsection*{Acknowledgement}
This work started in June of 2017 at the Fields Institute in Toronto, Canada, during the workshop on~ \emph{Random Geometric Graphs and their Applications to Complex Networks}.  The authors  thank the Fields Institute for its hospitality and support.

\bibliographystyle{plain}
\bibliography{IP}

\appendix

\section{Proof of Lemmas~\ref{lem:relAngle_generic} and~\ref{lem:dist}} \label{sec:basic_lemmas}

\begin{proof}[Proof of Lemma~\ref{lem:relAngle_generic}]
The expression for $\theta_R (r,r')$ is a consequence of the hyperbolic law of cosines at  \eqref{RE}.
We first prove (i).  We compute:
\begin{eqnarray*}
 \frac{\cosh r \cosh r' - \cosh R}{\sinh r \sinh r'}
 & = &
\frac{
 \frac14(e^{r+r'}+e^{r-r'}+e^{r'-r}+e^{-(r+r')}) -\frac12(e^R+e^{-R})
 }{
 \frac14(e^{r+r'}-e^{r-r'}-e^{r'-r}+e^{-(r+r')})
 } \\
 & = &
 1
 +
 2 \frac{
 e^{r-r'}+e^{r'-r} - e^R - e^{-R}
 }{
 e^{r+r'}-e^{r-r'}-e^{r'-r}+e^{-(r+r')}
 }  \\
 & = &
 1- 2e^{-(r+r' -R)}\left(\frac{1 - e^{r-r'-R} - e^{r'-r-R} + e^{-2R}}{1-e^{-2r}-e^{-2r'}+e^{-2(r+r')}}\right) \\
 & = &
  1 - x,
\end{eqnarray*}
where
\be \label{defx}
x:= 2e^{-(r+r' -R)} \cdot \frac{(1 - e^{r-r'-R})(1-e^{r'-r-R})}{(1-e^{-2r})(1-e^{-2r'})}.
\ee

By definition of $\Delta (r,r')$ it suffices to bound $\frac12 e^{R/2} \arccos (1 - x)$ above and below.
 First, we remark that $r-r'-R > -2r'$ since $r+r' > R$. This implies that $(1 - e^{r-r'-R}) / (1-e^{-2r'}) \in (0,1)$.
Similarly, we have $(1 - e^{r'-r-R}) / (1-e^{-2r}) < 1$.  Let $\eps \in (0,1)$.
Since $r,r' \in [C,R]$, it follows that if $C:= C(\eps)$ is large enough then we have
$$
\frac{1 - e^{r-r'-R}}{1-e^{-2r'}}> \frac{1 - e^{-r'}}{1- e^{-2r'}} > 1 -\eps,
$$
and
$$\frac{1 - e^{r-r'-R}}{1-e^{-2r}}> 1- \eps. $$
With $s:= 2 e^{-(r+r'-R)}$, this shows that
\begin{equation} \label{eq:x_bounds}  s(1-\eps)^2 < x <s.
\end{equation}
Taylor's expansion of $\arccos(\cdot )$ implies  there exists a constant $K>0$ such that
\begin{equation} \label{eq:arccos_expansion}
\sqrt{2v} - K v^{3/2} < \arccos (1 - v) < \sqrt{2v} + Kv^{3/2}, \ \ v \in (0,1).
\end{equation}
Replacing $v$ with $x$, inequality ~\eqref{eq:x_bounds} implies that
$$ (1 -\eps )\sqrt{2s} - K s^{3/2} < \arccos (1 - x) < \sqrt{2s} + Ks^{3/2}.$$
Now, since $r+r' > R + C$ it follows that $s < 2 e^{-C}$ and thus $s^{3/2}= s^{1/2} s
< s^{1/2} 2^{1/2}  2^{1/2}e^{-C}.$
If $C:= C(\eps)$ is large enough so that $2^{1/2} e^{-C} < \eps /K$, we have
\be \label{Ineq}
Ks^{3/2} \leq \eps \sqrt{2s}.
\ee
This yields
$$
(1 -2\eps ) \sqrt{2s} < \arccos (1 - x) < \sqrt{2s} (1 + \eps) < \sqrt{2s} (1 + 2\eps).
$$
Note that
\begin{equation}\label{eq:bbad2}
\frac12 e^{R/2} \sqrt{2s} = e^{R/2 - (r+r'-R)/2} = e^{R-(r+r')/2} = e^{(y+y')/2},
\end{equation}
\noindent
where we recall $y :=R-r$ and $y':=R-r'$.
So for $\eps  \in (0,1/2)$ we obtain
$$
(1 -2\eps ) e^{(y+y')/2} < \frac12 e^{R/2} \arccos (1 - x) < (1 + 2\eps) e^{(y+y')/2}.
$$
Replacing $2 \eps$ by $\eps$, the inequality  \eqref{eq:asymp1.0} follows.
\vskip.2cm
We now show (ii). The assumption  $r,r'  \in [R - H, R]$ implies that $|r-r'| \leq H$. Thus,
$$ e^{r-r' -R}  \leq e^{H-R}, \ \ e^{r'-r - R} \leq e^{H-R} \ \mbox{and} \ e^{-2r} \leq e^{-2(R-H)}, \ \ e^{-2r'} \leq  e^{-2(R-H)}.$$
The definition of $x$ gives
$x = 2 e^{-(r+r' - R)} (1 + \delta_n (r,r') ),$
where $\delta_n (r,r') = o(1)$ uniformly over all $r,r' \in [R-H,R]$. Thus,~\eqref{eq:arccos_expansion} implies that
\begin{equation}  \label{lam} \begin{split}
\arccos (1-x) &= 2 e^{-\frac{1}{2} (r+r'-R)} (1+\delta_n (r,r'))^{1/2} \left( 1 +
\Theta (x)\right)\\
 &=: 2 e^{-\frac{1}{2} (r+r'-R)} (1+ \lambda_n (r,r')).
\end{split}
\end{equation}
The result then follows by~\eqref{eq:bbad2}.

We now prove (iii). To see this, recall from that $C$ is chosen to satisfy
$2^{1/2} e^{-C} < \eps /K$. Thus, this implies that $\eps$ as a function of $C$ can be selected such that
$\eps = \Theta( e^{-C})$.
\end{proof}

\begin{proof}[Proof of Lemma~\ref{lem:dist}]
The proof involves  elementary calculations, included here for completeness.
For the lower bound, we have
\begin{equation*}
\begin{split}
\bar{\rho}_{\alpha, n}(y) & > \frac{\alpha (e^{\alpha (R-y)} - e^{-\alpha (R-y)})}{e^{\alpha R} + e^{-\alpha R}} =
\frac{\alpha e^{\alpha (R-y)}}{e^{\alpha R} + e^{-\alpha R}} - \frac{\alpha e^{-\alpha (R-y)}}{e^{\alpha R} + e^{-\alpha R}} \\
&= \frac{\alpha e^{\alpha (R-y)}}{e^{\alpha R}} - \frac{\alpha e^{\alpha (R-y)}}{e^{\alpha R}} + \frac{\alpha e^{\alpha (R-y)}}{e^{\alpha R} + e^{-\alpha R}} -  \frac{\alpha e^{-\alpha (R-y)}}{e^{\alpha R} + e^{-\alpha R}} \\
&= \frac{\alpha e^{\alpha (R-y)}}{e^{\alpha R}} + \alpha e^{\alpha (R-y)}\left(- \frac{1}{e^{\alpha R}} + \frac{1}{e^{\alpha R} + e^{-\alpha R}} \right) -  \frac{\alpha e^{-\alpha (R-y)}}{e^{\alpha R} + e^{-\alpha R}} \\
&= \alpha e^{-\alpha y} -\alpha e^{\alpha (R-y)}\frac{e^{-\alpha R}}{e^{\alpha R}(e^{\alpha R} + e^{-\alpha R})}
- \frac{\alpha e^{-\alpha (R-y)}}{e^{\alpha R} + e^{-\alpha R}}  \\
&> \alpha e^{-\alpha y} - \frac{\alpha e^{-\alpha y}}{e^{\alpha R}(e^{\alpha R} + e^{-\alpha R})}
- \frac{\alpha e^{-\alpha y}}{e^{\alpha R} + e^{-\alpha R}} \\
&> \alpha e^{-\alpha y} - \frac{2\alpha}{e^{\alpha R}} >  \alpha e^{-\alpha y} - \frac{2\alpha}{e^{\alpha R}-2}.
\end{split}
\end{equation*}
The upper bound is derived similarly:
\begin{equation*}
\begin{split}
\bar{\rho}_{\alpha, n}(y)  &< \frac{\alpha e^{\alpha (R-y)}}{e^{\alpha R} -2} = \frac{\alpha e^{\alpha R} e^{-\alpha y}}{e^{\alpha R} -2} \\
&= \frac{\alpha (e^{\alpha R}-2 + 2) e^{-\alpha y}}{e^{\alpha R} -2} \\
&=  \frac{\alpha (e^{\alpha R}-2) e^{-\alpha y}}{e^{\alpha R} -2} +  \frac{2 \alpha e^{-\alpha y}}{e^{\alpha R} -2} \\
&=\alpha e^{-\alpha y} + \frac{2 \alpha e^{-\alpha y}}{e^{\alpha R} -2} \\
&<\alpha e^{-\alpha y} + \frac{2 \alpha }{e^{\alpha R} -2}.
\end{split}
\end{equation*}
\end{proof}

\section{ Proof of Lemma ~\ref{lem:variance equivalence} } \label{sec:variance equivalence}

For a  point process $\tilde{\cal P}$ on $D$ and a point $p \in D([0,R/2]) \cap  \tilde{\cal P}$, we define $\hat{\xi}^{iso}(p, \tilde{\cal P})$ to be equal to 1 if and only if
$B(p) \cap D([0,R/2]) \cap \{ \tilde{\cal P}  \setminus \{p\} \} = \emptyset$.
In other words, $\hat{\xi}^{iso}(p, \tilde{\cal P})$ is equal to 1 precisely when $B(p)$ does not contain any other
points of $ \tilde{\cal P} $ of height at most $R/2$.
Otherwise we put $\hat{\xi}^{iso}(p, \tilde{\cal P})= 0.$  For such $p$ we have
\begin{equation}\label{eq:scores_ineq}
\txi^{iso}(p,  \tilde{\cal P} )  \leq \hat{\xi}^{iso}(p, \tilde{\cal P}).
\end{equation}

We write $\xi(p, \tilde{\cal P})$ instead of $\txi^{iso} (p, \tilde{\cal P} )$ and we write $\hat{\xi}(p, \tilde{\cal P})$ instead of
$\hat{\xi}^{iso}(p, \tilde{\cal P})$.  With this definition, we set
$$
\widehat{S}^{iso}_H(\tilde{\cal P}_{\alpha} \cap D )
 := \sum_{p \in \tilde{\cal P}_{\alpha} \cap D([0,H]) } \hat{\xi}(p, \tilde{\cal P}_{\alpha} \cap D).
 $$
Observe  that $\widehat{S}^{iso}_H(\tilde{\cal P}_{\alpha} \cap D ) $ is distributed as
$\tilde{S}^{iso}_H (\tilde{\cal P}_{\alpha} \cap D) $ conditional on $E_n$.

Thus
$$
\Var [\tilde{S}^{iso}_H (\tilde{\cal P}_{\alpha} \cap D)  | E_n] = \Var [\widehat{S}^{iso}_H( \tilde{\cal P}_{\alpha} \cap D)].
$$
We will show that
\begin{equation} \label{eq:key-identity}
 |\Var [\tilde{S}^{iso}_H (\tilde{\cal P}_{\alpha} \cap D)]  -\Var[\widehat{S}^{iso}_H (\tilde{\cal P}_{\alpha} \cap D)] | =o(n).
\end{equation}
By \eqref{eq:variances_hat_Siso} we have  $\Var [ \tilde{S}^{iso}_H (\tilde{\cal P}_{\alpha} \cap D)]   = \Omega (n)$ for $\alpha \in (1, \infty)$  and thus the first part of Lemma~\ref{lem:variance equivalence} will then follow.

With $E:= \{(p_1,p_2) \in D : y(p_2) \leq y(p_1) \leq H\}$, using~\eqref{eq:variance}
we write the difference~\eqref{eq:key-identity} as follows:
\begin{equation*}
\begin{split}
&|\Var [\tilde{S}^{iso}_H (\tilde{\cal P}_{\alpha} \cap D)]  -\Var[\widehat{S}^{iso}_H (\tilde{\cal P}_{\alpha} \cap D)] |\leq \\
& 2\beta^2 \cdot \left|\int_{E}
( c^{\hat{\xi}} ((x_1,y_1),(x_2,y_2))  - c^{\xi} ((x_1,y_1),(x_2,y_2)) )  e^{-\alpha y_1} e^{-\alpha y_2} dx_2 dy_2 dy_1 dx_1 \right|\\
&+\Ex{ \sum_{p \in \tilde{\cal P}_{\alpha} \cap D } (\hat{\xi}(p, \tilde{\cal P}) - \xi(p, \tilde{\cal P}))}.
\end{split}
\end{equation*}

Observe now that $B(p) \cap \R {[0,R/2]} \subseteq B_{R/2}^+(p)$ (cf.~\eqref{eq:ball_plus}).
Furthermore, Lemma~\ref{4.1} implies that if $p_1,p_2 \in E$, then
$B_{R/2}^+(p_1) \cap B_{R/2}^+(p_2) = \emptyset$, when
$$|x(p_2) -x(p_1)| >_\Phi 2  e^{R/4}  ( e^{y(p_1)/2} + e^{y(p_2)/2} ).$$
If this condition holds, we have $B(p_1) \cap B(p_2) \cap \R{[0,R/2]} = \emptyset$,
which in turn implies that
$c^{\hat{\xi}} (p_1,p_2) =0$.
As we did before (see Lemma 3.3), we set $Y_i = e^{y(p_i)/2}$ for $i=1,2$ (we will be using this notation inside several integrals - there, we will be writing $Y_i =e^{y_i/2}$, for
$i=1,2$).
This observation motivates us to split $E$ into two sets:
$$ E_1 := \{(p_1,p_2) \in E \ : \
 0<_\Phi |x(p_2) - x(p_1)| \leq_\Phi 2 e^{\frac{R} {4}} (Y_1 +Y_2)
 \}$$
 and its complement inside $E$.
 In particular, it will suffice to show
 \begin{equation} \label{first}  \left| \int_{E_1}
( c^{\hat{\xi}} ((x_1,y_1),(x_2,y_2))  - c^{\xi} ((x_1,y_1),(x_2,y_2)) )  e^{-\alpha y_1} e^{-\alpha y_2} dx_2 dy_2 dy_1 dx_1
\right| = O(n^{2 - \alpha})
 \end{equation}
 and
 \begin{equation} \label{second}
 \left| \int_{E\setminus E_1}
 c^{\xi} ((x_1,y_1),(x_2,y_2))   e^{-\alpha y_1} e^{-\alpha y_2} dx_2 dy_2 dy_1 dx_1 \right| = O(n^{2 - \alpha})
 \end{equation}
 as on $E\setminus E_1$ the other covariance vanishes.

 Let us first show \eqref{first}.  For any $p_1,p_2 \in \R{[0,H]}$, we write
\begin{equation*}
\begin{split}
&
c^{\hat{\xi}} (p_1,p_2) - c^{\xi} (p_1,p_2) \\
& = \mathbb{E} [ \hat{\xi}(p_1, ( \tilde{\cal P}_{\alpha} \cap D) \cup \{p_1,p_2\}) \hat{\xi}(p_2, ( \tilde{\cal P}_{\alpha} \cap D) \cup \{p_1,p_2\}) \\
&\hspace{5cm}-
{\xi}(p_1,( \tilde{\cal P}_{\alpha} \cap D) \cup \{p_1,p_2\}) \xi(p_2, ( \tilde{\cal P}_{\alpha} \cap D) \cup \{p_1,p_2\}) ] \\
& - \left( \mathbb{E} [ \hat{\xi}(p_1,( \tilde{\cal P}_{\alpha} \cap D) \cup \{p_1\})] \mathbb{E} [  \hat{\xi}(p_2, ( \tilde{\cal P}_{\alpha} \cap D) \cup \{p_2\})] \right. \\
&\hspace{4.5cm}\left.-
  \mathbb{E} [ \xi(p_1, ( \tilde{\cal P}_{\alpha} \cap D) \cup \{p_1\})]  \mathbb{E} [ \xi(p_2, ( \tilde{\cal P}_{\alpha} \cap D) \cup \{p_2\}) ]\right)
\end{split}
\end{equation*}
But
\begin{equation*}
\begin{split}
& \hat{\xi}(p_1, ( \tilde{\cal P}_{\alpha} \cap D) \cup \{p_1,p_2\}) \hat{\xi}(p_2, ( \tilde{\cal P}_{\alpha} \cap D) \cup \{p_1,p_2\})\\
&\hspace{5cm}-
\xi(p_1, ( \tilde{\cal P}_{\alpha} \cap D) \cup \{p_1,p_2\}) \xi(p_2, ( \tilde{\cal P}_{\alpha} \cap D) \cup \{p_1,p_2\})  \\
& \leq \mathbf{1} (B (p_1) \cap (\tilde{\cal P}_{\alpha} \cap D)   \cap \R {[R/2,R]}\not =\emptyset)
+
\mathbf{1} (B (p_2) \cap \tilde{\cal P}_{\alpha} \cap D \cap \R {[R/2,R]}\not =\emptyset).
\end{split}
\end{equation*}
In other words, if the left-hand side is 1, then $\tilde{\cal P}_{\alpha} \cap D
$ has a point
in $B (p_1)$ or in $B(p_2)$ that has height at least $R/2$.
But by~\eqref{eq:inclusion_mapped} we have
$$ \mu (B (p_1)\cap D([ \frac{R} {2},R ] )) \leq \mu (B^+ (p_1) \cap \R{[  \frac{R} {2} , R]}).$$
Recall that $B^+ (p_1)$ is the union of two disjoint sets - so its measure naturally splits into two terms.
The first term is
\begin{align*}
 \mu (B^+ (p_1)\cap \R{[\frac{R} {2} ,R -y(p_1)-C]})  & = O(1)\cdot  Y_1 \cdot \int_{R/2}^R e^{y/2 -\alpha y}dy \\
 &  = O(1) \cdot Y_1\cdot e^{( \frac{1} {2} -\alpha) R/2}  \\
 & = O(n^{-(\alpha - \frac{1} {2} )})\cdot Y_1.
 \end{align*}
Now, the second term is (using $y(p_1) \leq H$)
\begin{align*}
 \mu (B^+ (p_1) \cap \R{[R-y(p_1) -C,R]})) &  = O(1) \cdot n \int_{R-H}^R e^{-\alpha y}dy \\
&  = O(R^{4\alpha}) \cdot n^{1- 2\alpha}  \\
& = o(n^{-(\alpha - \frac{1} {2} )}).
\end{align*}
Therefore, since $y(p_2) \leq y(p_1)$ we deduce that
\begin{equation} \label{eq:measure_high_up}
 \mu (B (p_2)\cap \R {[R/2,R]}) \leq \mu (B (p_1)\cap \R {[ \frac{R} {2}, R]}) = O(n^{-(\alpha -\frac{1} {2} )})\cdot Y_1.
\end{equation}
Using these upper bounds, we obtain:
\begin{align}
&\int_{E_1} \mathbb{E} \left[
 \hat{\xi}((x_1,y_1), (\tilde{\cal P}_{\alpha} \cap D)\cup \{(x_1,y_1),(x_2,y_2)\})
 \hat{\xi}((x_2,y_2),  (\tilde{\cal P}_{\alpha} \cap D)\cup \{(x_1,y_1),(x_2,y_2)\}) \right. \nonumber \\
&\hspace{1.5cm} \left.
-\xi((x_1,y_1), (\tilde{\cal P}_{\alpha} \cap D)\cup \{(x_1,y_1),(x_2,y_2)\})
\xi((x_2,y_2),  (\tilde{\cal P}_{\alpha} \cap D)\cup \{(x_1,y_1),(x_2,y_2)\}) \right] \times
\nonumber \\
&\hspace{1cm} e^{-\alpha y_1} e^{-\alpha y_2} dx_2 dy_2 dy_1 dx_1
\nonumber \\
& = O(1)\cdot  n^{-(\alpha - \frac{1} {2} )} \int_{-I_n}^{I_n} \int_{0}^H \int_0^{y_1}
\int_{-I_n}^{I_n} {\mathbf 1}
\left(|x_2  -x_1|_\Phi< 2e^{R/4} (Y_1 + Y_2)\right) \nonumber  \\
& \ \ \ \ \ \
\times Y_1 e^{-\alpha y_1} e^{-\alpha y_2} dx_2 dy_2 dy_1 dx_1  \nonumber \\
& = O(1)\cdot  n^{-(\alpha - \frac{1} {2} )} \cdot e^{R/4} \cdot
\int_{-I_n}^{I_n} \int_{0}^H \int_0^{y_1} Y_1 (Y_1+Y_2) e^{-\alpha y_1} e^{-\alpha y_2}dy_2 dy_1 dx  \nonumber \\
&  \stackrel{e^{R/4} = O(n^{1/2}), Y_2 \leq Y_1}{=} O(1) \cdot n^{1 - \alpha} \int_{-I_n}^{I_n} \int_{0}^H \int_0^{y_1}
 Y_1^2 e^{-\alpha y_1} e^{-\alpha y_2} dy_2 dy_1 dx_1  \nonumber \\
& \stackrel{\alpha >1}{=} O(1) \cdot n^{2-\alpha}.  \label{covar1}
\end{align}
Note also that for any $p \in \R{[0,H]}$
\begin{equation*}
\begin{split}
\mathbb{E} [ \xi (p, ( \tilde{\cal P}_{\alpha} \cap D) \cup \{p\}) ]
&= \mathbb{E} [ \hat{\xi} (p,  ( \tilde{\cal P}_{\alpha} \cap D) \cup \{p\}) ]
\cdot \exp \left( - \mu ( B (p)\cap \R {[\frac{R} {2} ,R]} )\right)  \\
&= \mathbb{E} [ \hat{\xi} (p, ( \tilde{\cal P}_{\alpha} \cap D) \cup \{p\}) ] \left( 1+ O(n^{-(\alpha - \frac{1}{2} )}
)\right).
\end{split}
\end{equation*}
Therefore,
\begin{align*}
& \left| \mathbb{E} [  \hat{\xi} (p_1,  ( \tilde{\cal P}_{\alpha} \cap D) \cup \{p_1\})  ] \mathbb{E} [ \hat{\xi} (p_2,  ( \tilde{\cal P}_{\alpha} \cap D) \cup \{p_2\}) ] - \right. \\
&\hspace{5cm}\left. \mathbb{E} [  \xi (p_1,  ( \tilde{\cal P}_{\alpha} \cap D) \cup \{p_1\})  ]  \mathbb{E} [ \xi (p_1,  ( \tilde{\cal P}_{\alpha} \cap D) \cup \{p_2\}) ]
\right|  \\
& = O(n^{-(\alpha - \frac{1} {2}  )}) \cdot \Ex{ \xi (p_1,  ( \tilde{\cal P}_{\alpha} \cap D) \cup \{p_1\})  }
 \Ex{ \xi (p_2,  ( \tilde{\cal P}_{\alpha} \cap D) \cup \{p_2\}) } \\
&  = O(n^{-(\alpha - \frac{1} {2})  }).
\end{align*}
So
\begin{align}
& \int_{E_1}\left|
\Ex{\hat{\xi} ((x_1,y_1),  (\tilde{\cal P}_{\alpha} \cap D)\cup \{(x_1,y_1)\})}
\Ex{ \hat{\xi} ((x_2,y_2),  (\tilde{\cal P}_{\alpha} \cap D)\cup \{(x_2,y_2) \})} \right.
 \nonumber \\
&\hspace{1.5cm}
-\left.
\Ex{ \xi((x_1,y_1),  (\tilde{\cal P}_{\alpha} \cap D)\cup \{(x_1,y_1)\})}
\Ex{\xi ((x_2,y_2),  (\tilde{\cal P}_{\alpha} \cap D)\cup \{(x_2,y_2)\})}
\right|\times   \nonumber \\
&\hspace{1cm} e^{-\alpha y_1} e^{-\alpha y_2} dx_2 dy_2 dy_1 dx_1 \nonumber\\
&=O(n^{-(\alpha - \frac{1} {2} )})\cdot  \int_{-I_n}^{I_n} \int_{0}^H \int_0^{y_1}
\int_{-I_n}^{I_n} {\mathbf 1}\left(|x_2  -x_1|_\Phi< 2e^{R/4} (Y_1 + Y_2)\right) \nonumber  \\
& \ \ \ \ \ \ \times e^{-\alpha y_1} e^{-\alpha y_2} dx_2 dy_2 dy_1 dx_1  \nonumber  \\
&=O(n^{-(\alpha - \frac{1} {2} )})\cdot  e^{R/4} \int_{-I_n}^{I_n} \int_{0}^H \int_0^H
(Y_1 + Y_2) e^{-\alpha y_1} e^{-\alpha y_2} dy_2 dy_1 dx_1  \nonumber \\
&\stackrel{Y_1,Y_2\geq 1, Y_i = e^{y_i/2}}{=} O(n^{-(\alpha - \frac{1} {2} )})\cdot  e^{R/4} \int_{-I_n}^{I_n} \int_{0}^H \int_0^H
Y_1\cdot  Y_2 Y_1^{-1-2\alpha} Y_2^{-1-2\alpha} dY_2 dY_1 dx_1  \nonumber  \\
&\stackrel{e^{R/4} = O(n^{1/2})}{=}
O(n^{1+ \frac{1} {2} - (\alpha - \frac{1} {2} )}) = O(n^{2-\alpha}).  \label{covar2}
\end{align}
Combining \eqref{covar1} and \eqref{covar2} we obtain \eqref{first} as desired.

Now we establish \eqref{second}. In particular,  Lemma~\ref{lem:mu_intersection}(ii) implies that
for any $(p_1,p_2) \in E \setminus E_1$,
with $t= |x(p_1) - x(p_2)|_\Phi$ we have
$$\mu (\InterSec{p_1p_2}) =O(1)\cdot  t^{1-2\alpha} (Y_1 + Y_2)^{2\alpha} + n^{1-2\alpha}\cdot Y_1^{2\alpha}. $$
But $t> 2 e^{R/4} (Y_1 + Y_2)$ which implies that
$t^{1-2\alpha} (Y_1 +Y_2)^{2\alpha} = O(1) \cdot n^{-(\alpha - \frac{1} {2} )} (Y_1 +Y_2) $.
Since $Y_2 \leq Y_1 \leq R^2$, we deduce that
$$  \mu (\InterSec{p_1p_2})  = O(1)\cdot  t^{1-2\alpha} Y_1 = O(R^2) \cdot n^{-(\alpha -1/2)}= o(1).$$
So by~\eqref{eq:cov_intersection} we conclude that
$$c^{\xi} (p_1,p_2) = O(1)\cdot  t^{1-2\alpha} Y_1 \mathbb{E} [
\xi(p_1,(\tilde{\cal P}_{\alpha} \cap D )\cup \{p_1\})]   \mathbb{E} [
\xi(p_2,(\tilde{\cal P}_{\alpha} \cap D )\cup \{p_2\})] .
$$
Therefore, setting $t=|x_1-x_2|$
\begin{align*}
& \left| \int_{E\setminus E_1} c^{\xi} ((x_1,y_1),(x_2,y_2))e^{-\alpha y_1} e^{-\alpha y_2}  dx_2 dy_2 dy_1 dx_1\right|  \\
& \stackrel{t>2e^{R/4} (Y_1+Y_2)> e^{R/4}}{=} O(1)\cdot \int_{-I_n}^{I_n}  \int_0^H \int_0^H \int_{e^{R/4}}^{I_n} t^{1-2\alpha}
 Y_1 e^{-\alpha y_1} e^{-\alpha y_2}  dt dy_2 dy_1 dx_1 \\
 & = O(1) \cdot n \cdot e^{2(1-\alpha)R/4} = O(n^{2-\alpha}),
\end{align*}
where we recall $Y_1 := e^{y_1/2}, \alpha \in (1, \infty)$. Thus \eqref{second} holds.

To finish the bound on $|\Var [\widehat{S}^{iso}_H (\tilde{\cal P}_{\alpha} \cap D)]  -\Var[\tilde{S}^{iso}_H (\tilde{\cal P}_{\alpha} \cap D)] |$, we also need to bound
$\left|\mathbb{E} [ \widehat{S}^{iso}_H (\tilde{\cal P}_{\alpha} \cap D) ] - \mathbb{E} [ \tilde{S}^{iso}_H (\tilde{\cal P}_{\alpha} \cap D)] \right|$, from which will conclude
 the second part of Lemma~\ref{lem:variance equivalence}.

Applying the Campbell-Mecke formula~\eqref{eq:Campbell-Mecke}, we get
\begin{equation*}
\begin{split}
&\left| \mathbb{E} [\widehat{S}^{iso}_H (\tilde{\cal P}_{\alpha} \cap D)]  - \mathbb{E} [\tilde{S}^{iso}_H (\tilde{\cal P}_{\alpha} \cap D) ]  \right| \\
&= \left|\mathbb{E}  \sum_{ p \in \tilde{\cal P}_{\alpha} \cap D([0,H])  } \left( \hat{\xi} (p, \tilde{\cal P}_{\alpha} \cap D)  - \xi (p, \tilde{\cal P}_{\alpha} \cap D) \right) \right| \\
& \leq \int_{-I_n}^{I_n} \int_0^H
\mathbb{E}[ \hat{\xi}((x,y), (\tilde{\cal P}_{\alpha} \cap D)  \cup \{(x,y)\}) - \xi((x,y), (\tilde{\cal P}_{\alpha} \cap D)  \cup \{(x,y)\})]
e^{-\alpha y} dy dx.
\end{split}
\end{equation*}

But by~\eqref{eq:measure_high_up}, for any $p\in \R{[0,H]}$ we have
$$
\mathbb{E} [ \hat{\xi} (p, \tilde{\cal P}_{\alpha} \cap D) - \xi (p, \tilde{\cal P}_{\alpha} \cap D)]  \leq
\mu ( B(p) \cap \R{[R/2,R]}) = O(n^{-(\alpha - \frac{1} {2} )})\cdot e^{y(p)/2}.
$$
Substituting this into the above integral we get
\begin{equation*}
\left| \mathbb{E}[ \widehat{S}^{iso}_H (\tilde{\cal P}_{\alpha} \cap D)] - \mathbb{E} [\tilde{S}^{iso}_H (\tilde{\cal P}_{\alpha} \cap D)] \right| =
O(1) \cdot n^{ \frac{3} { 2}  -\alpha}.
\end{equation*}

Combining \eqref{first} and \eqref{second} with this, we deduce that
$|\Var [\widehat{S}^{iso}_H (\tilde{\cal P}_{\alpha} \cap D)]  -\Var[\tilde{S}^{iso}_H (\tilde{\cal P}_{\alpha} \cap D)] | = O(n^{2-\alpha}) \stackrel{\alpha >1}{=} o(n),$
which shows~\eqref{eq:key-identity}.

Furthermore, since $\sigma_n' = \sqrt{\Var [\tilde{S}^{iso}_H (\tilde{\cal P}_{\alpha} \cap D) ]} = O(n^{1/2})$ we obtain for $\alpha \in (1, \infty)$
$$
\frac{\left| \mathbb{E} [\widehat{S}^{iso}_H (\tilde{\cal P}_{\alpha} \cap D) ] - \mathbb{E} [ \tilde{S}^{iso}_H (\tilde{\cal P}_{\alpha} \cap D)] \right| }{\sigma_n'} = O(n^{\frac{3} {2} -\alpha - \frac{1} {2}} ) = O(n^{1-\alpha}) =o(1),
$$
which concludes the proof of the second part of the lemma.
\qed

\section{Proof of Lemma~\ref{gam1}} \label{sec:lemma_gam1}

For $\gamma_1$ we have:
\begin{equation*}
\begin{split}
\gamma_1 &= O(1)\cdot  \int_{-I_n}^{I_n} \int_{-I_n}^{I_n}\int_{-I_n}^{I_n} \int_{[0,R/2]^3}
\left( \Ex{(\nabla_{p_2}F)^2 (\nabla_{p_3}F)^2}\right)^{1/2}
\left( \Ex{(\nabla_{p_1,p_2}^2 F)^2 (\nabla_{p_1,p_3}^2F)^2}\right)^{1/2}  \\
& \hspace{4cm} \times e^{-\alpha y_1} e^{-\alpha y_2} e^{-\alpha y_3} dy_1 dy_2  dy_3 dx_3 dx_2 dx_1 \\
&= O(1)\cdot  \int_{-I_n}^{I_n} \int_{-I_n}^{I_n}\int_{-I_n}^{I_n} \int_{[0,R/2]^3}
\left( \Ex{(\nabla_{p_2}F)^2 (\nabla_{p_3}F)^2}\right)^{1/2}
\left( \Ex{(\nabla_{p_1,p_2}^2 F)^2 (\nabla_{p_1,p_3}^2F)^2}\right)^{1/2}  \\
& \hspace{2cm} \times
{\mathbf 1} (|x_2 - x_1|_{\Phi}\leq 2 \gamma e^{H/2} e^{(y_1 \vee y_2 )/2} )
{\mathbf 1} (|x_3 - x_1|_{\Phi}\leq 2 \gamma e^{H/2} e^{(y_1 \vee y_3 )/2} ) \\
& \hspace{4cm} \times e^{-\alpha y_1} e^{-\alpha y_2} e^{-\alpha y_3} dy_1 dy_2  dy_3 dx_3 dx_2 dx_1.
\end{split}
\end{equation*}
By the Cauchy-Schwarz inequality
\begin{equation*}
\begin{split}
\Ex{(\nabla_{p_2}F)^2 (\nabla_{p_3}F)^2} &\leq \Ex{(\nabla_{p_2}F)^4}^{1/2} \Ex{(\nabla_{p_3}F)^4}^{1/2} \\
&\leq
\frac{1}{\sigma_n'^4} \Ex{(X(p_2)+1)^4}^{1/2} \Ex{(X(p_3)+1)^4}^{1/2} \\
& = O(1) \cdot \frac{1}{\sigma_n'^4} e^{y_2 + y_3},
\end{split}
\end{equation*}
where the last equality follows by Lemma~\ref{eq:measures_bounds}.
Therefore,
\begin{equation} \label{eq:prod_operators}
\Ex{(\nabla_{p_2}F)^2 (\nabla_{p_3}F)^2}^{1/2} = O(1) \cdot \frac{1}{\sigma_n'^2} e^{ \frac{1} {2} (y_2 + y_3) }.
\end{equation}
Using ~\eqref{eq:joint_X} and \eqref{eq:prod_operators} and integrating first with respect to $x_2$ and $x_3$, we get
\begin{equation} \label{eq:triple_int_gamma_1}
\begin{split}
\gamma_1^2 &= O(1) \cdot e^{2H}\frac{1}{\sigma_n'^4} \int_{-I_n}^{I_n}\int_{[0,R/2]^3}
e^{y_2/2 + y_3/2}
e^{(y_1\vee y_2)/2} \cdot
e^{(y_1\vee y_3)/2}\cdot  e^{(y_1\wedge y_2)/2} \cdot e^{(y_1 \wedge y_3)/2}    \\
& \hspace{4cm} \times e^{-\alpha y_1} e^{-\alpha y_2} e^{-\alpha y_3} dy_1 dy_2  dy_3 dx_1 \\
& = O(1) \cdot e^{2H}\frac{n}{\sigma_n'^4} \int_{[0,R/2]^3}
e^{y_2/2 + y_3/2}
e^{(y_1\vee y_2)/2} \cdot
e^{(y_1\vee y_3)/2}\cdot e^{(y_1\wedge y_2)/2} \cdot e^{(y_1 \wedge y_3)/2}   \\
& \hspace{4cm} \times e^{-\alpha y_1} e^{-\alpha y_2} e^{-\alpha y_3} dy_1 dy_2  dy_3,
\end{split}
\end{equation}
where we use $I_n = \Theta(n)$ in the last equality.
We will bound the integral in \eqref{eq:triple_int_gamma_1} by considering the four sub-domains we considered for the bound
on $\gamma_2$.
We start with $D_1$ on which $y_1\leq y_2,y_3$:
\begin{equation*}
\begin{split}
\int_{D_1} &
e^{y_2/2 + y_3/2} \cdot
e^{(y_1\vee y_2)/2} \cdot
e^{(y_1\vee y_3)/2} e^{(y_1\wedge y_2)/2} \cdot e^{(y_1 \wedge y_3)/2} \cdot e^{-\alpha y_1} e^{-\alpha y_2} e^{-\alpha y_3} dy_1 dy_2  dy_3 \\
& =\int_{D_1} e^{y_2/2 + y_3/2}  e^{y_2/2}\cdot  e^{y_3/2} \cdot e^{y_1} \cdot e^{-\alpha y_1} e^{-\alpha y_2} e^{-\alpha y_3} dy_1 dy_2  dy_3 \\
&=\int_{D_1} e^{(1-\alpha) y_1} e^{(1-\alpha)y_2} e^{(1-\alpha)y_3} dy_2 dy_3 dy_1 \\
&\leq \int_{[0,R/2]^3} e^{(1-\alpha) y_1} e^{(1-\alpha)y_2} e^{(1-\alpha)y_3} dy_2 dy_3 dy_1
\stackrel{\alpha >1}{=}O(1).
\end{split}
\end{equation*}
On $D_2$, where $y_2 \leq y_1 \leq y_3$ we have:
\begin{align*}
& \int_{D_2}  e^{y_2/2 + y_3/2} \cdot e^{(y_1\vee y_2)/2} \cdot
e^{(y_1\vee y_3)/2} e^{(y_1\wedge y_2)/2} \cdot e^{(y_1 \wedge y_3)/2} \cdot e^{-\alpha y_1} e^{-\alpha y_2} e^{-\alpha y_3} dy_1 dy_2  dy_3  \\
& = \int_{D_2} e^{y_2/2 + y_3/2} \cdot
e^{y_1/2} \cdot e^{y_3/2} \cdot e^{y_2/2} \cdot e^{y_1/2} \cdot e^{-\alpha y_1} e^{-\alpha y_2} e^{-\alpha y_3} dy_1 dy_2  dy_3 \\
&= \int_{D_2} e^{(1-\alpha) y_1} e^{(1-\alpha)y_2} e^{(1-\alpha)y_3} dy_2 dy_3 dy_1 \\
& \leq \int_{[0,R/2]^3} e^{(1-\alpha) y_1} e^{(1-\alpha)y_2} e^{(1-\alpha)y_3} dy_2 dy_3 dy_1
=O(1).
\end{align*}
By symmetry, integration on $D_3$ gives the same upper bound.
Finally, on $D_4$ where $y_2,y_3 \leq y_1$ we get
\begin{align*}
& \int_{D_4} e^{y_2/2 + y_3/2} \cdot  e^{(y_1\vee y_2)/2} \cdot
e^{(y_1\vee y_3)/2} e^{(y_1\wedge y_2)/2} \cdot e^{(y_1 \wedge y_3)/2} \cdot e^{-\alpha y_1} e^{-\alpha y_2} e^{-\alpha y_3} dy_1 dy_2  dy_3  \\
& = \int_{D_4} e^{y_2/2 + y_3/2} \cdot e^{y_1} e^{y_2/2 + y_3/2} \cdot e^{-\alpha y_1} e^{-\alpha y_2} e^{-\alpha y_3} dy_1 dy_2  dy_3  \\
&= \int_{D_4} e^{(1-\alpha) y_1} e^{(1-\alpha)y_2} e^{(1-\alpha)y_3} dy_2 dy_3 dy_1 \\
& \leq \int_{[0,R/2]^3} e^{(1-\alpha) y_1} e^{(1-\alpha)y_2} e^{(1-\alpha)y_3} dy_2 dy_3 dy_1
=O(1).
\end{align*}
Combining these four upper bounds into~\eqref{eq:triple_int_gamma_1} we obtain
$\gamma_1 = O(1) \cdot e^{H} n^{1/2}/\sigma_n'^2 =  o(1)$ as desired, where the last equality follows since  $\sigma_n' = \Theta (n^{1/2})$.
This completes the proof of Lemma \ref{gam1}.  \qed


\end{document}